\newcommand{\br}{\mathbb{R}}
\newcommand{\varep}{\varepsilon}
\newcommand{\brd}{\mathbb{R}^d}
\newtheorem{thm}{Theorem}[section]
\newtheorem{lemma}[thm]{Lemma}
\newtheorem{remark}[thm]{Remark}
\numberwithin{equation}{section}
\begin{document}

\bibliographystyle{amsplain}

\title{Estimates of Eigenvalues and Eigenfunctions \\ in Periodic Homogenization}

\author{Carlos E. Kenig\thanks{Supported in part by NSF grant DMS-0968472}
 \and Fanghua Lin \thanks{Supported in part by NSF grant DMS-1159313}
\and Zhongwei Shen\thanks{Supported in part by NSF grant DMS-1161154}}

\date{ }

\maketitle

\begin{abstract}
For a family
of elliptic operators with rapidly oscillating periodic coefficients,
we study the convergence rates for Dirichlet eigenvalues  and bounds of the normal derivatives of Dirichlet
eigenfunctions.
The results rely on an $O(\varep)$ estimate in $H^1$ for solutions with Dirichlet condition.
\end{abstract}

\section{Introduction}

This paper concerns with the asymptotic behavior of Dirichlet eigenvalues and eigenfunctions for
a family of elliptic operators
with rapidly oscillating coefficients.
More precisely, consider
\begin{equation}\label{operator}
\mathcal{L}_\varep =-\text{div} \big( A\left({x}/{\varep}\right)\nabla\big)
=-\frac{\partial }{\partial x_i}
\left[ a_{ij}^{\alpha\beta}\left(\frac{x}{\varep}\right)
\frac{\partial}{\partial x_j} \right], \quad \varep>0
\end{equation}
(the summation convention is used throughout the paper).
We will assume that $A(y)=(a_{ij}^{\alpha\beta} (y))$
with $1\le i,j\le d$ and $1\le \alpha,\beta\le m$ is real and satisfies the
ellipticity condition
\begin{equation}\label{ellipticity}
\kappa |\xi|^2 \le a_{ij}^{\alpha\beta} (y) \xi_i^\alpha\xi_j^\beta
\le {\kappa^{-1}} |\xi|^2 \ \ 
\text{ for } y\in \br^d \text{ and } \xi=(\xi_i^\alpha)\in \br^{dm},
\end{equation}
where $\kappa\in (0,1)$, and the periodicity condition
\begin{equation}\label{periodicity}
A(y+z)=A(y) \quad \text{ for } y\in \br^d \text{ and } z\in \mathbb{Z}^d.
\end{equation}
The symmetry condition $A^*=A$, i.e., $a_{ij}^{\alpha\beta}=a_{ji}^{\beta\alpha}$,
will also be needed for our main results.
Let $\{\lambda_{\varep, k}\}$ denote the sequence of Dirichlet eigenvalues in an increasing order
for $\mathcal{L}_\varep$ in a bounded domain $\Omega$.
We shall use $\{\lambda_{0,k}\}$ to denote the sequence of Dirichlet eigenvalues in an increasing
order for the homogenized (effective) operator $\mathcal{L}_0$ in $\Omega$.
It is well known that for each $k$ fixed, $\lambda_{\varep, k}\to \lambda_{0, k}$, as $\varep \to 0$.
We are interested in the bounds of $|\lambda_{\varep, k}-\lambda_{0, k}|$, which exhibit explicitly dependence on
$\varep$ and $k$.
The following is one of the main results of the paper.

\begin{thm}\label{theorem-A}
Suppose that $A$ satisfies conditions (\ref{ellipticity})-(\ref{periodicity}) and $A^*=A$.
If $m\ge 2$, we also assume that $A$ is H\"older continuous.
Let $\Omega$ be a bounded $C^{1,1}$ domain (or convex domain in the case $m=1$)
in $\br^d$, $d\ge 2$.
Then
\begin{equation}\label{main-estimate-A}
|\lambda_{\varep, k}-\lambda_{0,k} |\le C\varep\,  (\lambda_{0, k})^{3/2},
\end{equation}
where $C$ is independent of $\varep$ and $k$.
\end{thm}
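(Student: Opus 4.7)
The plan is to pass to the resolvent formulation. Since $A^*=A$, the Dirichlet solution operators $T_\varep=\mathcal{L}_\varep^{-1}$ and $T_0=\mathcal{L}_0^{-1}$ are compact and self-adjoint on $L^2(\Omega)$, with eigenvalues $\mu_{\varep,k}=1/\lambda_{\varep,k}$ and $\mu_{0,k}=1/\lambda_{0,k}$ (in decreasing order) and $L^2$-orthonormal eigenbases $\{\phi_{\varep,k}\}$, $\{\phi_{0,k}\}$. Since $|\lambda_{\varep,k}-\lambda_{0,k}|=\lambda_{\varep,k}\lambda_{0,k}\,|\mu_{\varep,k}-\mu_{0,k}|$, the task reduces to an upper bound on $|\mu_{\varep,k}-\mu_{0,k}|$ via the Courant--Fischer min--max principle.

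The main analytic input, flagged in the abstract, is the $O(\varep)$ $H^1$-estimate for the Dirichlet problem; in resolvent form it reads
\[
\|T_\varep f - T_0 f\|_{L^2(\Omega)} \le C\varep\,\|T_0 f\|_{H^2(\Omega)}.
\]
Combined with standard $H^2$-regularity for $\mathcal{L}_0$ on $C^{1,1}$ (or convex, for $m=1$) domains, this yields $\|T_\varep-T_0\|_{L^2\to L^2}\le C\varep$. Setting $V_{k-1}:=\mathrm{span}\{\phi_{0,1},\ldots,\phi_{0,k-1}\}$, a direct application of min--max using test functions $v \in V_{k-1}^\perp$ together with $\|\phi_{0,k}\|_{H^2}\le C\lambda_{0,k}$ yields the crude bound $|\lambda_{\varep,k}-\lambda_{0,k}|\le C\varep\,\lambda_{0,k}^{2}$.

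To sharpen the exponent from $2$ to $3/2$ I would exploit the bilinear identity
\[
((T_\varep-T_0)v,v) = \int_\Omega\bigl(A(\cdot/\varep)-A^0\bigr)\nabla(T_0 v)\cdot\nabla(T_\varep v)\,dx,
\]
which follows from $\mathcal{L}_0 T_0 v=\mathcal{L}_\varep T_\varep v=v$ and integration by parts (using $A^*=A$). For $v \in V_{k-1}^\perp$ with $\|v\|_{L^2}=1$ one has $(T_0 v,v)\le 1/\lambda_{0,k}$, whence the energy inequality gives $\|\nabla T_0 v\|_{L^2}\le C\lambda_{0,k}^{-1/2}$ (and similarly for $T_\varep v$). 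Using a flux-corrector representation $a_{ij}^0-a_{ij}(y)=\partial_k \phi_{kij}(y)$ (with $\phi_{kij}$ periodic and antisymmetric in $k,i$), a further integration by parts extracts an explicit factor of $\varep$ at the cost of one extra derivative on one factor. Combining these bounds should produce $|((T_\varep-T_0)v,v)|\le C\varep\,\lambda_{0,k}^{-1/2}$, which by min--max is equivalent to (\ref{main-estimate-A}).

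The chief obstacle is the boundary layer: the flux-corrector integration by parts leaves a surface term that is not automatically $O(\varep)$, and one also needs to justify the ``extra derivative'' on $T_\varep v$ (whose $H^2$ norm blows up with $\varep$) via a two-scale expansion with boundary cutoff. Controlling both uses the $C^{1,1}$ regularity of $\partial\Omega$ together with, in the system case $m\ge 2$, the H\"older continuity of $A$, which yields pointwise corrector bounds and permits an interior/boundary-layer decomposition absorbing the loss into an $\varep$-neighborhood of $\partial\Omega$. The symmetry $A^*=A$ is used twice: for the spectral framework and in the integration by parts producing the bilinear identity.
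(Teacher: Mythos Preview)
Your overall framework is exactly right and matches the paper: pass to resolvents, use the min--max principle as in Lemma~\ref{lemma-3.1}, and reduce to showing $|\langle (T_\varep-T_0)f,f\rangle|\le C\varep\,\mu_{0,k}^{1/2}$ for $f\perp V_{0,k-1}$ (and the symmetric statement with $V_{\varep,k-1}$). Your observation that $\|\nabla T_0 f\|_{L^2}\le C\mu_{0,k}^{1/2}$ on this subspace is also the key inequality the paper uses.

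However, the mechanism you propose for extracting the $\varep$ has a genuine error and an unresolved gap. The identity $\hat a_{ij}-a_{ij}(y)=\partial_k\phi_{kij}(y)$ with $\phi_{kij}=-\phi_{ikj}$ is \emph{false}: such an antisymmetric potential exists only for a periodic field that is row-divergence-free, and $\partial_i\big(\hat a_{ij}-a_{ij}(y)\big)=-\partial_i a_{ij}(y)\neq 0$ in general. What does admit this structure is $b_{ij}=\hat a_{ij}-a_{ij}-a_{ik}\partial_k\chi_j$ (Lemma~\ref{lemma-2.0}), so any integration by parts must carry an extra corrector term along. Even with the correct $b_{ij}$, your proposed route leaves boundary integrals and puts a derivative on $\nabla u_\varep$; a boundary cutoff in the two-scale expansion typically costs a factor $\varep^{1/2}$ in $H^1$ near $\partial\Omega$, which would bring you back to the exponent $2$, not $3/2$. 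You correctly flag this as the chief obstacle but do not resolve it.

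The paper avoids the bilinear-identity route entirely. With $u_\varep=T_\varep f$ and $u_0=T_0 f$ it writes
\[
\langle u_\varep-u_0,f\rangle
=\big\langle u_\varep-u_0-\{\Phi_{\varep,\ell}^\beta-P_\ell^\beta\}\partial_\ell u_0^\beta,\ f\big\rangle
+\big\langle \{\Phi_{\varep,\ell}^\beta-P_\ell^\beta\}\partial_\ell u_0^\beta,\ f\big\rangle,
\]
and then applies the $H^1_0$ convergence estimate \eqref{H-1-estimate} (which is what the abstract refers to---not the $L^2$ bound you quote) together with $\|\Phi_\varep-P\|_{L^\infty}\le C\varep$. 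The first term is bounded by $\|w_\varep\|_{H^1_0}\|f\|_{H^{-1}}\le C\varep\cdot C\mu_{0,k}^{1/2}$, the second by $C\varep\|\nabla u_0\|_{L^2}\le C\varep\,\mu_{0,k}^{1/2}$. The point is that the \emph{Dirichlet} corrector $\Phi_\varep$, which satisfies exact boundary data, is what kills the boundary layer with no $\varep^{1/2}$ loss; this is the missing ingredient in your proposal.
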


\begin{remark}\label{remark-1.1}
{\rm
By the mini-max principle and Weyl asymptotic formula,
\begin{equation}\label{weyl}
\lambda_{\varep, k}\approx \lambda_{0,k} \approx k^{\frac{2}{dm}}.
\end{equation}
In view of (\ref{main-estimate-A}) and (\ref{weyl}) we obtain
\begin{equation}\label{1.1-1}
|\lambda_{\varep, k}-\lambda_{0, k}|\le C \varep \,k^{\frac{3}{dm}},
\end{equation}
where $C$ is independent of $\varep$ and $k$.
It also follows from (\ref{weyl}) that  the estimate (\ref{main-estimate-A}) is trivial if $\varep (\lambda_{0,k})^{1/2}\ge 1$.
}
\end{remark}

{\rm
Asymptotic behavior of spectra of the operators
$\{\mathcal{L}_\varep\}$
 is an important problem in periodic homogenization;
results related to the convergence of eigenvalues may be found in
\cite{Kesavan-1}
\cite{Kesavan-2}
\cite{Santosa-Vogelius}
\cite{Jikov-1994}
\cite{Moskow-Vogelius-2}
\cite{Moskow-Vogelius-1}
\cite{Castro-Zuazua-2000}
\cite{Castro-2000}
\cite{KLS2}
\cite{Prange-2011}
(also see recent papers
\cite{Bonder-2012-2} 
\cite{Bonder-2012}
\cite{Bonder-2012-1}
for quasilinear elliptic equations).
In particular,  the estimate $|\lambda_{\varep, k}-\lambda_{0, k}|\le C_k\,  \varep$,
which is known under the assumptions on $A$ and $\Omega$ in Theorem \ref{theorem-A},
may be deduced from the $L^2$ convergence estimate:
$\| u_\varep -u_0\|_{L^2(\Omega)}
\le C \varep \| f\|_{L^2(\Omega)}$, where
$u_\varep$ ($\varep\ge 0$) denotes the solution of the Dirichlet problem:
$\mathcal{L}_\varep (u_\varep)=f$ in $\Omega$ and
$u_\varep=0$ on $\partial\Omega$.
Such $L^2$ estimate, which may be found in \cite{Griso-2006} \cite{KLS2} \cite{kls3} \cite{Suslina-2012}
for smooth domains,
in fact implies that
\begin{equation}\label{1.01}
|\lambda_{\varep, k}-\lambda_{0, k}|\le C\,\varep\, \lambda_{0,k}^2,
\end{equation}
where $C$ is independent of $\varep$ and $k$.
In the case that $\Omega$ is a bounded Lipschitz domain,
it was proved in \cite{KLS2} that $\| u_\varep -u_0\|_{L^2(\Omega)}\le C_\sigma\, \varep (|\ln\varep|+1)^{\frac12 +\sigma}
\|f\|_{L^2(\Omega)}$ for any $\sigma>0$, provided
$A$ satisfies (\ref{ellipticity})-(\ref{periodicity}), $A^*=A$, and $A$
is H\"older continuous.
As a result we obtain 
$$
|\lambda_{\varep, k}-\lambda_{0,k}|\le C_\sigma\, \varep \, (|\ln (\varep)|+1)^{\frac12 +\sigma}
(\lambda_{0,k})^2,
$$
where $C_\sigma$ depends on $\sigma$, but not on $\varep$ or $k$.
}

Our estimate in Theorem \ref{theorem-A} improves the estimate (\ref{1.01})
by a factor of $(\lambda_{0,k})^{1/2}$.
This is achieved by utilizing the following $O(\varep)$ estimate in $H^1_0(\Omega;\br^m)$:
\begin{equation}\label{1.02}
\big\| u_\varep -u_0 -\big\{ \Phi^\beta_{\varep, j} -P_j^\beta\big\} \frac{\partial u_0^\beta}{\partial x_j}
\big\|_{H^1_0(\Omega)}
\le C  \varep \, \| f  \|_{L^2(\Omega)},
\end{equation}
where $C$ depends only on $A$ and $\Omega$.
Here $P_j^\beta(x)=x_j (0, \dots, 1,\dots, 0)$ with $1$ in the $\beta^{th}$ position;
$\Phi_\varep (x)=\big(\Phi_{\varep, j}^\beta(x)\big)$ denotes the so-called matrix of Dirichlet correctors, defined
by
\begin{equation}\label{definition-of-Phi}
\left\{
\aligned
\mathcal{L}_\varep (\Phi_{\varep, j}^\beta) & =0 \quad\  \ \text{ in } \Omega,\\
\Phi_{\varep, j}^\beta  & =P_j^\beta \quad \text{ on } \partial\Omega.
\endaligned
\right.
\end{equation}
We remark that (\ref{1.02}) is a special case of convergence estimates in $W^{1,p}_0(\Omega)$
established in \cite{kls3} for $1<p<\infty$, under the assumption that
$A$ satisfies (\ref{ellipticity})-(\ref{periodicity}) and 
is H\"older continuous.
We provide a direct proof, which also covers the scalar case 
$m=1$ without the smoothness condition, in Section 2.
The proof of Theorem \ref{theorem-A}, which uses (\ref{1.02})
and a minimax argument, is given in Section 3.

In this paper we also study the upper and lower bounds of the normal derivatives of the eigenfunctions for $\mathcal{L}_\varep$.
Let $\phi$ be an eigenfunction of the Dirichlet Laplacian on a Lipschitz domain $\Omega$; i.e., $\phi\in H_0^1(\Omega)$ and
$-\Delta \phi=\lambda \phi$ in $\Omega$. Assume that $\|\phi\|_{L^2(\Omega)}=1$.
It follows from the Rellich identity that
\begin{equation}\label{Laplace-estimate}
\int_{\partial\Omega} \big|\frac{\partial\phi}{\partial n}\big|^2\, d\sigma \le C \lambda,
\end{equation}
where $C$ depends only on $\Omega$.
The argument works equally well for second-oder elliptic operators with Lipschitz continuous coefficients.
In fact it was proved in \cite{Hassell-Tao-2002}
that the estimate (\ref{Laplace-estimate}) holds
if $\Omega$ is a general smooth compact Riemannian manifold with boundary. Furthermore, 
the lower bound $c\lambda \le \|{\partial \phi}/{\partial n}\|^2_{L^2(\partial\Omega)}$
 holds, if $\Omega$ has no trapped geodesics
(see related work in \cite{Ozawa-1993} \cite{Xu-2011}; we were kindly informed by N. Burq that
 the results on upper and lower bounds in \cite{Hassell-Tao-2002} may be deduced from 
 earlier work on the wave equations in \cite{Bardos-1992} \cite{Burq-1997}).

A very interesting problem is whether the estimate (\ref{Laplace-estimate}) holds
for eigenfunctions of $\mathcal{L}_\varep$, with constant $C$ {\it independent} of $\varep$
and $\lambda$.
This problem is closely related to the uniform boundary controllability of the wave operator
$\frac{\partial^2}{\partial t^2}+\mathcal{L}_\varep$ (see e.g. \cite{lions-1988} 
\cite{AL-1989-ho} \cite{Avellaneda-1992} \cite{Castro-2000}
\cite{Lebeau-2000}  and their references).
In the case $m=d=1$, it is known that the estimate (\ref{Laplace-estimate}) with constant $C$ independent of
$\varep$ and $\lambda$ may fail. Counter-examples of eigenfunctions $\phi_\varep $
 with eigenvalues $\lambda_\varep \sim \varep^{-2}$
can be constructed so that
\begin{equation}\label{counter-example}
\int_{\partial\Omega} \big|\frac{\partial \phi_\varep}{\partial n}\big |^2\, d\sigma\sim (\lambda_\varep)^{3/2}
\end{equation}
(see e.g. \cite{Castro-2000}). We remark that asymptotic behavior of
eigenvalues and eigenfunctions below and above the critical size $(\lambda_{\varep, k} \sim \varep^{-2})$
was investigated rather extensively for $d=m=1$ in \cite{Castro-1999} \cite{Castro-Zuazua-2000} \cite{Castro-2000}.
To the best of our knowledge, the only results for the case $d\ge 2$ were contained in \cite{Lebeau-2000},
where an observability estimate for a wave equation with rapidly oscillating density was established.
Note that if $d=1$, equations with oscillating coefficients are equivalent to those with 
oscillating potentials. This, however, is not the case in higher dimensions.

In this paper we show that the estimate (\ref{Laplace-estimate}) holds if $\varep \lambda_\varep\le 1$.
In fact  we obtain the following.

\begin{thm}\label{theorem-B}
Suppose that $A$ satisfies (\ref{ellipticity})-(\ref{periodicity}) and $A^*=A$.
Also assume that $A$ is Lipschitz continuous.
Let $\Omega$ be a bounded $C^{1,1}$ domain in $\br^d$, $d\ge 2$.
Let $\phi_\varep\in H_0^1(\Omega;\br^m) $ be a Dirichlet eigenfunction for $\mathcal{L}_\varep$ in $\Omega$
with the associated eigenvalue 
$\lambda_\varep $ and $\|\phi_\varep \|_{L^2(\Omega)}=1$.
Then
\begin{equation}\label{main-estimate-B}
\int_{\partial\Omega}
|\nabla \phi_\varep |^2\, d\sigma
\le\left\{
\begin{array}{ll}
C\lambda_\varep (1+\varep^{-1}) \quad & \text{ if } \ \varep^2 \lambda_\varep \ge 1,\\
 C\lambda_\varep(1+ \varep \lambda_\varep) \quad & \text{ if } \ \varep^2 \lambda_\varep <1,
\end{array}
\right.
\end{equation}
where $C$ depends only on $A$ and $\Omega$.
\end{thm}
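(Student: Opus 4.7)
The plan is to attack the estimate by a Rellich identity adapted to $\mathcal{L}_\varep$, with the two cases in (\ref{main-estimate-B}) arising from two distinct ways of handling the one ``bad'' term.

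First I would fix a vector field $h\in C^1(\overline{\Omega};\br^d)$ with $h=n$ on $\partial\Omega$, multiply $\mathcal{L}_\varep \phi_\varep=\lambda_\varep \phi_\varep$ by $h_k \partial_k \phi_\varep^\alpha$, and integrate by parts twice, using $A^*=A$. Because $\phi_\varep=0$ on $\partial\Omega$ implies $\nabla \phi_\varep = n\,\partial_n\phi_\varep$ there, the boundary contribution collapses to $\int_{\partial\Omega}a_{ij}^{\alpha\beta}(x/\varep) n_i n_j \partial_n \phi_\varep^\alpha \partial_n \phi_\varep^\beta\,d\sigma$, which by ellipticity dominates $\kappa\int_{\partial\Omega}|\nabla \phi_\varep|^2 d\sigma$. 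The resulting identity has the schematic form
\begin{equation*}
\int_{\partial\Omega}|\nabla\phi_\varep|^2\,d\sigma
\;\le\; C\lambda_\varep\!\int_\Omega \phi_\varep(h\cdot\nabla\phi_\varep)
\;+\;C\!\int_\Omega|\nabla\phi_\varep|^2
\;+\;|I_\varep|,
\end{equation*}
where $I_\varep:=\varep^{-1}\int_\Omega h_k(\partial_{y_k} a_{ij}^{\alpha\beta})(x/\varep)\partial_i\phi_\varep^\alpha\partial_j\phi_\varep^\beta\,dx$. The first term is $\le C\lambda_\varep$ after one more integration by parts (the boundary piece vanishes since $\phi_\varep|_{\partial\Omega}=0$), and $\int_\Omega|\nabla\phi_\varep|^2 \le C\lambda_\varep$ from $\|\phi_\varep\|_{L^2}=1$. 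The whole game is to estimate $I_\varep$.

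For the case $\varep^2\lambda_\varep\ge 1$, I would bound $I_\varep$ crudely using the Lipschitz hypothesis on $A$:
\begin{equation*}
|I_\varep|\le C\varep^{-1}\!\int_\Omega |\nabla\phi_\varep|^2\,dx \le C\varep^{-1}\lambda_\varep,
\end{equation*}
which immediately yields the first line of (\ref{main-estimate-B}).

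For the regime $\varep^2\lambda_\varep<1$, I would refine the estimate of $I_\varep$ using (\ref{1.02}). Let $v_0\in H^1_0(\Omega;\br^m)$ solve $\mathcal{L}_0 v_0=\lambda_\varep \phi_\varep$, set $w_\varep:=\phi_\varep -v_0-(\Phi_{\varep,j}^\beta-P_j^\beta)\partial_j v_0^\beta$, and note (\ref{1.02}) gives $\|w_\varep\|_{H^1_0}\le C\varep\lambda_\varep$, while $C^{1,1}$ regularity for $\mathcal{L}_0$ gives $\|v_0\|_{H^2}\le C\lambda_\varep$. Substituting the expansion
$\nabla\phi_\varep=\nabla\Phi_\varep\cdot\nabla v_0+(\Phi_\varep-P)\cdot\nabla^2 v_0+\nabla w_\varep$
into $I_\varep$ produces a sum of quadratic terms. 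Using the Avellaneda--Lin uniform Lipschitz bound $\|\nabla\Phi_\varep\|_{L^\infty}\le C$ together with $\|\Phi_\varep-P\|_{L^\infty}\le C\varep$, the contributions involving $w_\varep$ or $(\Phi_\varep-P)\nabla^2 v_0$ are controlled by
\begin{equation*}
C\varep^{-1}\|w_\varep\|_{H^1}^2+C\varep^{-1}\varep^2\|v_0\|_{H^2}^2 \le C\varep\lambda_\varep^2.
\end{equation*}
The remaining, oscillating principal term
$\varep^{-1}\!\int h_k(\partial_{y_k}A)(x/\varep)(\nabla\Phi_\varep\nabla v_0)\cdot(\nabla\Phi_\varep\nabla v_0)\,dx$
is the genuine obstacle. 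I would write $\varep^{-1}(\partial_{y_k}A)(x/\varep)=\partial_{x_k}[A(x/\varep)]$ and integrate by parts, moving the $x_k$-derivative onto the $v_0$-factors and $\nabla\Phi_\varep$-factors. The second-derivative terms of $\Phi_\varep$ that are thereby generated are treated via the cell-problem representation $\Phi_\varep=P+\varep\chi(\cdot/\varep)+\text{(boundary corrector)}$ and the flux corrector $E^{\alpha\beta}_{kij}$ satisfying $a_{ij}^{\alpha\gamma}(I+\nabla\chi)-\widehat{a}_{ij}^{\alpha\beta}=\partial_k E^{\alpha\beta}_{kij}$ with $E$ skew in $(k,i)$, which absorbs the remaining $\varep^{-1}$. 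Together these estimates give $|I_\varep|\le C\lambda_\varep(1+\varep\lambda_\varep)$ and the second line of (\ref{main-estimate-B}).

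The main obstacle is this last step: the manipulation of the oscillating principal term. Everything else is classical Rellich-identity machinery and direct substitution; the delicate point is that one must extract cancellation from the periodic structure of $A$ and the correctors, since the straightforward pointwise bound $|(\partial_y A)(x/\varep)|\le C$ only yields the Case~1 estimate. The payoff of pushing through the corrector calculation is that the $\varep^{-1}\lambda_\varep$ loss is replaced by the much smaller $\varep\lambda_\varep^2$ whenever $\varep^2\lambda_\varep<1$.
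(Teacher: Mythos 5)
Your Rellich-identity starting point is the same as the paper's (Lemma~\ref{Rellich-lemma}), and your Case~1 bound $|I_\varep|\le C\varep^{-1}\lambda_\varep$ is fine, but the Case~2 argument has two genuine gaps. First, there is a structural choice you make early that the paper avoids: you take a \emph{global} vector field $h$ with $h=n$ on $\partial\Omega$, so that
$I_\varep=\varep^{-1}\int_\Omega h_k(\partial_{y_k}a_{ij})(x/\varep)\,\partial_i\phi^\alpha_\varep\partial_j\phi^\beta_\varep\,dx$
is a \emph{full-domain} integral with a $\varep^{-1}$ prefactor. After you substitute
$\nabla\phi_\varep=\nabla\Phi_\varep\,\nabla v_0+(\Phi_\varep-P)\nabla^2 v_0+\nabla w_\varep$
into this bilinear form, you claim the non-principal contributions are $\le C\varep\lambda_\varep^2$. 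That bound covers the \emph{diagonal} pieces, but the \emph{cross} pieces are not absorbed: for example
$\varep^{-1}\int h_k(\partial_{y_k}A)(x/\varep)(\nabla\Phi_\varep\nabla v_0)\cdot\nabla w_\varep\,dx$
is bounded by $\varep^{-1}\|\nabla\Phi_\varep\nabla v_0\|_{L^2}\|\nabla w_\varep\|_{L^2}\lesssim\varep^{-1}\cdot\lambda_\varep^{1/2}\cdot\varep\lambda_\varep=\lambda_\varep^{3/2}$, and the same order appears in the $(\nabla\Phi_\varep\nabla v_0)\cdot(\Phi_\varep-P)\nabla^2 v_0$ cross term. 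Since $\lambda_\varep^{3/2}\not\le C\lambda_\varep(1+\varep\lambda_\varep)$ when $\varep^2\lambda_\varep<1$, these cross terms already destroy the target bound. Second, the treatment of the ``oscillating principal term'' is not worked out and I do not see how the plan can work: writing $\varep^{-1}(\partial_{y_k}A)(x/\varep)=\partial_{x_k}[A(x/\varep)]$ and integrating by parts produces second derivatives $\partial_k\partial_i\Phi_\varep$, which are only $O(\varep^{-1})$ in $L^\infty$ (and not uniformly controlled near $\partial\Omega$ via the cell expansion because of the boundary corrector), so the $\varep^{-1}$ is not actually absorbed; in addition, the boundary terms generated by this integration by parts involve $\nabla\Phi_\varep$ and $\nabla v_0$ on $\partial\Omega$ and are not obviously smaller than the quantity you want to estimate. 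The flux corrector $F^{\alpha\beta}_{kij}$ of Lemma~\ref{lemma-2.0} is attached to the specific divergence structure of $B=\widehat{A}-A(I+\nabla\chi)$; the trilinear form $\partial_yA\cdot\nabla\Phi_\varep\cdot\nabla\Phi_\varep$ does not have that structure, so invoking the skew-symmetry of $F$ here is not a substitute for an actual computation.

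The paper sidesteps both issues with a different choice of $h$: in Lemma~\ref{lemma-4.2} the vector field is supported in a boundary strip of width $\sim\varep$, with $|h|\le 1$ and $|\nabla h|\le C\varep^{-1}$. Then the $\partial_k[A(x/\varep)]$ term of the Rellich identity is automatically $\le C\varep^{-1}\int_{\Omega_\varep}|\nabla u_\varep|^2$ -- the $\varep^{-1}$ is offset by the $O(\varep)$ thickness -- and the whole problem reduces to bounding $\varep^{-1}\int_{\Omega_\varep}|\nabla u_\varep|^2\,dx$. This is then estimated by comparing $\nabla u_\varep$ to $(\nabla\Phi_\varep)\nabla v_\varep$ via (\ref{estimate-2.5}), and bounding $\varep^{-1}\int_{\Omega_\varep}|\nabla v_\varep|^2$ by $\int_{\partial\Omega}|\nabla v_\varep|^2+C\varep\|\nabla^2 v_\varep\|_{L^2}^2$ through the Fundamental Theorem of Calculus; the trace integral is then handled by a Rellich identity for the \emph{constant-coefficient} operator $\mathcal{L}_0$ (so no oscillating $\varep^{-1}$ term arises), together with $H^{-1}$ duality and Theorem~\ref{H-1-theorem} to obtain $\int_{\partial\Omega}|\nabla v_\varep|^2\le C\lambda(1+\varep\lambda)$. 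Because the splitting there is of the form $(a+b)^2\le 2a^2+2b^2$ inside $\int_{\Omega_\varep}$ (with no extra $\varep^{-1}$ weight from $\partial_yA$), no problematic cross terms occur. If you want to push your global-$h$ approach through, you would need to either localize $h$ as in the paper, or produce genuine cancellation in $I_\varep$ that your current argument does not provide; as written, Case~2 only yields the weaker $O(\lambda^{3/2})$ bound (which the paper obtains separately in Theorem~\ref{theorem-4.7} by an unrelated, more elementary argument).
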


If $\varep \lambda_\varep $ is sufficiently small, we also obtain a sharp lower bound in the case of scalar equations.

\begin{thm}\label{theorem-C}
Let $m=1$ and $\Omega$ be a bounded $C^2$ domain in $\br^d$, $d\ge 2$.
Suppose that $A$ satisfies the same conditions as in Theorem \ref{theorem-B}.
Let $\phi_\varep\in H^1_0(\Omega)$ be a Dirichlet eigenfunction with the associated eigenvalue $\lambda_\varep$ and 
$\|\phi_\varep\|_{L^2(\Omega)} =1$. Then there exists $\delta>0$ such that if $\lambda_\varep>1$ and
$\varep \lambda_\varep <\delta $,
\begin{equation}\label{main-estimate-C}
\int_{\partial\Omega} |\nabla \phi_\varep|^2\, d\sigma \ge c\, \lambda_\varep,
\end{equation}
where $\delta>0$ and $c>0$ depend only on $A$ and $\Omega$.
\end{thm}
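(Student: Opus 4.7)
The plan is to reduce the lower bound to the classical Rellich-Pohozaev identity for the constant-coefficient homogenized operator $\mathcal{L}_0$, and then transfer the resulting estimate back to $\phi_\varepsilon$ using the $H^1$ two-scale expansion~(\ref{1.02}).

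\textbf{Step 1 (Nearby homogenized eigenfunction).} By Theorem~\ref{theorem-A} there is an eigenvalue $\lambda_0$ of $\mathcal{L}_0$ with
$|\lambda_\varepsilon-\lambda_0|\le C\varepsilon\lambda_\varepsilon^{3/2}=C(\varepsilon\lambda_\varepsilon)\lambda_\varepsilon^{1/2}\le C\delta\,\lambda_\varepsilon^{1/2}$,
so $\lambda_0\asymp\lambda_\varepsilon$ once $\delta$ is small (using $\lambda_\varepsilon>1$). Applying~(\ref{1.02}) with $f=\lambda_\varepsilon\phi_\varepsilon$ and a spectral projection onto the $\lambda_0$-eigenspace of $\mathcal{L}_0$ selects an $L^2$-normalized eigenfunction $\phi_0$ of $\mathcal{L}_0$ associated to $\lambda_0$ such that, setting $\tilde\phi_\varepsilon:=\phi_0+(\Phi_{\varepsilon,j}-x_j)\partial_j\phi_0$ and $w_\varepsilon:=\phi_\varepsilon-\tilde\phi_\varepsilon$,
\[
\|w_\varepsilon\|_{H^1_0(\Omega)}\le C\varepsilon\lambda_\varepsilon.
\]

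\textbf{Step 2 (Rellich for $\mathcal{L}_0$).} Since $\mathcal{L}_0$ has constant coefficients and $\Omega$ is $C^2$, multiplying $\mathcal{L}_0\phi_0=\lambda_0\phi_0$ by $h\cdot\nabla\phi_0$ for a smooth vector field $h$ on $\bar\Omega$ with $h\cdot n\ge c_\Omega>0$ on $\partial\Omega$ (which exists because $\Omega\in C^2$) and integrating by parts yields the classical bound
\[
\int_{\partial\Omega}\Bigl|\frac{\partial\phi_0}{\partial n}\Bigr|^2 d\sigma\ge c_0\,\lambda_0,
\]
with $c_0$ depending only on $\Omega$ and the coefficients of $\mathcal{L}_0$.

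\textbf{Step 3 (Boundary transfer).} Since the Dirichlet correctors satisfy $\Phi_{\varepsilon,j}=x_j$ on $\partial\Omega$, the approximant $\tilde\phi_\varepsilon$ vanishes there and a direct differentiation gives $\partial\tilde\phi_\varepsilon/\partial n=\theta_\varepsilon\,\partial\phi_0/\partial n$ on $\partial\Omega$, with $\theta_\varepsilon:=n_i n_j\,\partial_i\Phi_{\varepsilon,j}$. Hence the pointwise identity $\partial\phi_\varepsilon/\partial n = \theta_\varepsilon\,\partial\phi_0/\partial n + \partial w_\varepsilon/\partial n$ on $\partial\Omega$ and $|a+b|^2\ge\tfrac12 a^2-b^2$ give
\[
\int_{\partial\Omega}|\nabla\phi_\varepsilon|^2 d\sigma\ge\tfrac12\int_{\partial\Omega}\theta_\varepsilon^2\Bigl|\frac{\partial\phi_0}{\partial n}\Bigr|^2 d\sigma-\int_{\partial\Omega}\Bigl|\frac{\partial w_\varepsilon}{\partial n}\Bigr|^2 d\sigma.
\]
By the Avellaneda-Lin uniform boundary Lipschitz estimates for the Dirichlet correctors (valid because $A$ is Lipschitz and $\Omega\in C^{1,1}$), $\theta_\varepsilon$ is uniformly bounded on $\partial\Omega$; combining the cell-problem identities with the ellipticity of the homogenized matrix and Step~2 produces a uniform lower bound $\int_{\partial\Omega}\theta_\varepsilon^2|\partial\phi_0/\partial n|^2 d\sigma\ge c_1\lambda_0$. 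Finally, $w_\varepsilon$ satisfies an equation $\mathcal{L}_\varepsilon w_\varepsilon=g_\varepsilon$ with $g_\varepsilon$ explicit in $\phi_0$ and the correctors; a Rellich-type identity for $w_\varepsilon$ in the spirit of the proof of Theorem~\ref{theorem-B}, together with the interior bound $\|w_\varepsilon\|_{H^1_0}\le C\varepsilon\lambda_\varepsilon$, yields $\int_{\partial\Omega}|\partial w_\varepsilon/\partial n|^2 d\sigma=o(\lambda_\varepsilon)$ as $\varepsilon\lambda_\varepsilon\to 0$. Choosing $\delta$ small enough to absorb this error into $\tfrac12 c_1\lambda_0\asymp\lambda_\varepsilon$ gives~(\ref{main-estimate-C}).

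\textbf{Main obstacle.} The delicate step is the uniform lower bound on $\int_{\partial\Omega}\theta_\varepsilon^2|\partial\phi_0/\partial n|^2 d\sigma$: although $\theta_\varepsilon$ tends on average to $1$ through the cell-problem identities, pointwise it oscillates on scale $\varepsilon$, may change sign, and picks up boundary-layer contributions from $\Phi_{\varepsilon,j}-x_j$. Ruling out pathological cancellation against the slowly varying weight $|\partial\phi_0/\partial n|^2$ requires a careful boundary-homogenization analysis and is substantially more subtle than the corresponding upper-bound argument underlying Theorem~\ref{theorem-B}.
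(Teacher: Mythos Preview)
Your strategy—reduce to a Pohozaev lower bound for a homogenized object and then transfer it to $\phi_\varepsilon$ via the boundary identity $\partial_n\phi_\varepsilon=\theta_\varepsilon\,\partial_n\phi_0+\partial_n w_\varepsilon$—is natural but differs substantially from the paper's proof, and two of the three steps have genuine gaps.

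\emph{Step 1.} You cannot extract an $L^2$-normalized eigenfunction $\phi_0$ of $\mathcal{L}_0$ satisfying $\|w_\varepsilon\|_{H^1_0}\le C\varepsilon\lambda_\varepsilon$ with a uniform constant: any spectral projection onto a single eigenspace of $\mathcal{L}_0$ introduces a factor depending on the spectral gap of $\mathcal{L}_0$ near $\lambda_0$, which is not controlled. The paper sidesteps this by working not with an eigenfunction but with $v_\varepsilon\in H^1_0(\Omega)$ solving $\mathcal{L}_0 v_\varepsilon=\lambda_\varepsilon\phi_\varepsilon$; the Pohozaev identity for $v_\varepsilon$ with $h(x)=x$ still gives $\int_{\partial\Omega}|\nabla v_\varepsilon|^2\,d\sigma\ge c\lambda_\varepsilon-C\varepsilon\lambda_\varepsilon^2$ (Lemma~\ref{lemma-5.2}), because the interior terms collapse to $2\lambda_\varepsilon$ up to errors of size $O(\varepsilon\lambda_\varepsilon^2)$. (Incidentally, your Step~2 needs $h=x$ specifically, not a generic field with $h\cdot n\ge c$: for general $h$ the interior terms do not simplify to a multiple of $\lambda_0$.)

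\emph{Step 3.} What you call the ``main obstacle,'' the lower bound $\theta_\varepsilon\ge c>0$ on $\partial\Omega$, is in fact resolvable: since $\Phi_{\varepsilon,j}=x_j$ on $\partial\Omega$ one checks $J(\Phi_\varepsilon)=|\theta_\varepsilon|$ there, and the paper proves $\theta_\varepsilon\ge c$ (Lemma~\ref{lemma-5.1}) by applying the Hopf maximum principle to $\Phi_{\varepsilon,d}$ together with a barrier at scale $\varepsilon$. The harder, unaddressed gap is your claim that a Rellich identity for $w_\varepsilon$ gives $\int_{\partial\Omega}|\partial_n w_\varepsilon|^2\,d\sigma=o(\lambda_\varepsilon)$. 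The right side $g_\varepsilon=\mathcal{L}_\varepsilon w_\varepsilon$ (cf.\ (\ref{formula-2.1})) contains divergence-form terms such as $\varepsilon\,\partial_i\big(F_{jik}(x/\varepsilon)\,\partial^2_{jk}v_\varepsilon\big)$; testing against $h\cdot\nabla w_\varepsilon$ and integrating by parts forces either $\nabla^2 w_\varepsilon$ in the interior or boundary traces of $\nabla^2 v_\varepsilon$, neither of which is controlled by $\|w_\varepsilon\|_{H^1_0}\le C\varepsilon\lambda_\varepsilon$. There is no evident way to close this.

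The paper therefore avoids boundary traces of the remainder entirely. After Lemma~\ref{lemma-5.1} and the Pohozaev bound for $v_\varepsilon$, it combines them with~(\ref{estimate-2.5}) to obtain the two-sided \emph{interior} estimate $c\lambda\le\varepsilon^{-1}\int_{\Omega_{c\varepsilon}}|\nabla\phi_\varepsilon|^2\,dx\le C\lambda$. The passage from this boundary-layer energy to $\int_{\partial\Omega}|\nabla\phi_\varepsilon|^2\,d\sigma$ is then carried out locally at scale $\varepsilon$: a compactness and unique-continuation lemma (Lemma~\ref{lemma-5.3}, Remark~\ref{remark-5.1}) shows that on any $\varepsilon$-box where a doubling condition $\int_{D(2\varepsilon)}|\nabla\phi_\varepsilon|^2\le N\int_{D(\varepsilon)}|\nabla\phi_\varepsilon|^2$ holds, the surface integral dominates $\varepsilon^{-1}$ times the solid integral; a covering argument with good/bad boxes and the upper bound in~(\ref{5.5.1}) then controls the bad set. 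This local scale-$\varepsilon$ mechanism—not a global boundary estimate for the corrector remainder—is the key idea your proposal is missing.
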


\begin{remark}\label{remark-1.3}
{\rm
It follows from (\ref{main-estimate-B}) that
\begin{equation}\label{1.3-1}
\int_{\partial\Omega} |\nabla \phi_\varep|^2\, d\sigma
\le C\, (\lambda_\varep)^{3/2},
\end{equation}
where $C$ depends only on $A$ and $\Omega$.
In Section 4 we provide a direct proof of (\ref{1.3-1}), 
under the weaker assumptions that $\Omega$ is Lipschitz, $A$ satisfies (\ref{ellipticity})-(\ref{periodicity}),
$A^*=A$, and $A$ is H\"older continuous.
The proof uses the $L^2$ Rellich estimates established  in \cite{Kenig-Shen-2}.
}
\end{remark}

Let $\{\phi_{\varep, k}\}$ be an orthonormal basis of $L^2(\Omega; \br^m)$, where
$\phi_{\varep, k}$ is a Dirichlet eigenfunction for $\mathcal{L}_\varep$ in $\Omega$ with eigenvalue $\lambda_{\varep, k}$.
The spectral (cluster) projection operator $S_{\varep,\lambda} (f)$ is defined by
\begin{equation}\label{definition-of-S}
S_{\varep,\lambda} (f)
=\sum_{\sqrt{\lambda_{\varep, k}}\in 
\big[\sqrt{\lambda}, \sqrt{\lambda} +1 \big)}
\phi_{\varep, k} (f),
\end{equation}
where $\lambda\ge 1$, $ \phi_{\varep, k}(f) (x) =<\phi_{\varep, k}, f> \phi_{\varep, k} (x)$, and
$<\, , \, >$ denotes the inner product in $L^2(\Omega; \br^m)$.
Let $u_\varep=S_{\varep, \lambda} (f)$, where $f\in L^2(\Omega; \br^m)$ 
and $\| f\|_{L^2(\Omega)}=1$. We will show in Section 4 that
\begin{equation}\label{1.05}
\int_{\partial\Omega}
|\nabla u_\varep|^2\, d\sigma
\le 
\left\{
\begin{array}{ll}
 C\, \lambda (1+\varep^{-1}) \  & \text{ if }\  \varep^2 {\lambda}\ge 1,\\
 C \, \lambda  ( 1+ \varep \lambda ) \  &\text{ if } \ \varep^2 {\lambda}<1,
\end{array}
\right.
\end{equation}
where $C$ depends only on $A$ and $\Omega$.
Theorem \ref{theorem-B} follows if we choose $f$ to be an eigenfunction of $\mathcal{L}_\varep$.
We point out that while the estimate in (\ref{1.05}) for the case $\varep^2{\lambda}\ge 1$,
 as in the case of Laplacian \cite{Xu-2011},  follows readily from the Rellich
identities,
the proof for the case $\varep^2 {\lambda}<1$ is more subtle.
The basic idea is to use the $H^1$ convergence estimate (\ref{1.02}) to approximate the eigenfunction $\phi_\varep$
with eigenvalue $\lambda_\varep$ by the solution $v_\varep$ of the Dirichlet problem:
$\mathcal{L}_0 (v_\varep)=\lambda_\varep \phi_\varep$ in $\Omega$ and $v_\varep =0$ in $\partial\Omega$.
The same approach, together with a compactness argument,
 also leads to the sharp lower bound in Theorem \ref{theorem-C}, whose proof is given in Section 5.

\section{Convergence rates in $H^1$}

Let $\mathcal{L}_\varep=-\text{\rm div} (A(x/\varep)\nabla )$ 
with $A(y)=\big(a_{ij}^{\alpha\beta}(y)\big)$ satisfying
(\ref{ellipticity})-(\ref{periodicity}).
Let $\chi(y)=\big(\chi_j^{\alpha\beta} (y)\big)$ denote the matrix of correctors 
for $\mathcal{L}_1$ in $\br^d$, where
 $\chi_j^\beta (y)
=\big(\chi_j^{1\beta}(y), \dots, \chi_j^{m\beta}(y)\big)\in H_{\rm per}^1(Y; \br^m)$ 
is defined by the following cell problem:
\begin{equation}\label{cell-problem}
\left\{
\aligned
& \mathcal{L}_1 (\chi_j^\beta)=-\mathcal{L}_1(P_j^\beta)\quad \text{ in } \br^d,\\
&\chi_j^{\beta} \text{ is periodic with respect to }\mathbb{Z}^d
\text{ and } \int_Y
\chi_j^{\beta} \, dy =0, 
\endaligned
\right.
\end{equation}
for each $1\le j\le d$ and $1\le \beta\le m$.
Here $Y=[0,1)^d\simeq \brd/\mathbb{Z}^d$ and $P_j^\beta (y)
=y_j (0,\dots, 1, \dots, 0)$ with $1$ in the $\beta^{th}$ position.
The homogenized operator is given by
 $\mathcal{L}_0=-\text{div}(\widehat{A}\nabla)$, where 
$\widehat{A} =(\hat{a}_{ij}^{\alpha\beta})$ and
\begin{equation}
\label{homogenized-coefficient}
\hat{a}_{ij}^{\alpha\beta}
=\int_Y
\left[ a_{ij}^{\alpha\beta}
+a_{ik}^{\alpha\gamma}
\frac{\partial}{\partial y_k}\left( \chi_j^{\gamma\beta}\right)\right]
\, dy.
\end{equation}
Let
\begin{equation}\label{definition-of-B}
b_{ij}^{\alpha\beta} (y)
=\hat{a}_{ij}^{\alpha\beta}
-a_{ij}^{\alpha\beta} (y)
-a_{ik}^{\alpha\gamma} (y) \frac{\partial}{\partial y_k} \big(\chi_j^{\gamma\beta}\big),
\end{equation}
where $1\le \alpha, \beta\le m$ and $1\le i, j\le d$.

\begin{lemma}\label{lemma-2.0}
Suppose that $A$ satisfies conditions (\ref{ellipticity})-(\ref{periodicity}). 
For $1\le \alpha, \beta\le m$ and $1\le i,j,k\le d$, 
there exists
$F_{kij}^{\alpha\beta}\in H_{\rm per}^1(Y)$ such that
\begin{equation}\label{definition-of-F}
b_{ij}^{\alpha\beta} =\frac{\partial}{\partial y_k}
\big\{ F_{kij}^{\alpha\beta}\big\}
\quad \text{ and } \quad
F_{kij}^{\alpha\beta}=-F_{ikj}^{\alpha\beta}.
\end{equation}
Moreover,  $F=(F^{\alpha\beta}_{kij}) \in L^\infty(Y)$ if $\chi=(\chi_j^{\alpha\beta})$ is H\"older continuous.
\end{lemma}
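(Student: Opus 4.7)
The plan is to construct $F$ via an auxiliary Poisson problem on the torus followed by an anti-symmetrization, exploiting two algebraic properties of $b$: it has mean zero on $Y$, and it is divergence-free in its first index.

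First I would verify these two properties. Integrating (\ref{definition-of-B}) over $Y$ and comparing with (\ref{homogenized-coefficient}) gives $\int_Y b_{ij}^{\alpha\beta}\,dy = 0$. Rewriting the cell problem (\ref{cell-problem}) in divergence form,
\[
-\frac{\partial}{\partial y_i}\Bigl(a_{ik}^{\alpha\gamma}\,\frac{\partial \chi_j^{\gamma\beta}}{\partial y_k}\Bigr) \;=\; \frac{\partial a_{ij}^{\alpha\beta}}{\partial y_i}\quad\text{in}\ \mathcal{D}'(Y),
\]
and then differentiating (\ref{definition-of-B}) in $y_i$ (and summing over $i$) yields $\partial_i b_{ij}^{\alpha\beta} = 0$ in $\mathcal{D}'(Y)$.

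With these two identities in hand, for each fixed triple $(j,\alpha,\beta)$ let $f_{ij}^{\alpha\beta}\in H_{\rm per}^1(Y)$ denote the unique periodic zero-mean solution of $\Delta f_{ij}^{\alpha\beta} = b_{ij}^{\alpha\beta}$; solvability follows from the mean-zero property and standard regularity for the flat Laplacian on the torus gives $f_{ij}^{\alpha\beta}\in H_{\rm per}^2(Y)$. Define
\[
F_{kij}^{\alpha\beta} \;:=\; \frac{\partial f_{ij}^{\alpha\beta}}{\partial y_k} \,-\, \frac{\partial f_{kj}^{\alpha\beta}}{\partial y_i}.
\]
The antisymmetry $F_{kij}^{\alpha\beta}=-F_{ikj}^{\alpha\beta}$ is immediate, and
\[
\frac{\partial F_{kij}^{\alpha\beta}}{\partial y_k} \;=\; \Delta f_{ij}^{\alpha\beta}\,-\,\frac{\partial}{\partial y_i}\Bigl(\frac{\partial f_{kj}^{\alpha\beta}}{\partial y_k}\Bigr) \;=\; b_{ij}^{\alpha\beta} \,-\,\frac{\partial g_j^{\alpha\beta}}{\partial y_i},
\]
where $g_j^{\alpha\beta} := \partial_k f_{kj}^{\alpha\beta}$ satisfies $\Delta g_j^{\alpha\beta} = \partial_k b_{kj}^{\alpha\beta} = 0$ by the divergence-free property. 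Hence $g_j^{\alpha\beta}$ is periodic and harmonic on $Y$, so it is constant; that constant vanishes since $\int_Y g_j^{\alpha\beta}\,dy = \int_Y \partial_k f_{kj}^{\alpha\beta}\,dy = 0$ by periodicity. Thus $\partial_k F_{kij}^{\alpha\beta} = b_{ij}^{\alpha\beta}$ as required, and $F\in H_{\rm per}^1(Y)$.

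Finally, for the $L^\infty$ statement: under the paper's H\"older hypothesis on $\chi$, Schauder theory applied to the cell problem upgrades this to $\nabla\chi\in C^{0,\alpha}(Y)$, so $b_{ij}^{\alpha\beta}\in C^{0,\alpha}(Y)$; Schauder estimates for the Poisson equation on the torus then give $f_{ij}^{\alpha\beta}\in C^{2,\alpha}(Y)$, so $\nabla f\in L^\infty$ and $F\in L^\infty(Y)$. The main delicacy lies in this last step: mere H\"older continuity of $\chi$ does not by itself control the $\nabla\chi$ appearing inside $b$, so one really needs the ambient regularity of $A$ (e.g.\ $A\in C^{0,\alpha}$, which in the $m\ge 2$ setting is a standing assumption) to pass from Hölder information on $\chi$ to an $L^\infty$ bound on the flux correctors.
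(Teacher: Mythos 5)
Your construction of $F$ via the torus Poisson problem and anti-symmetrization is correct, and the verification of the two algebraic properties of $b$ (zero mean and divergence-free in the first index) as well as the antisymmetry and divergence identity for $F$ are all sound. This is the standard construction (the one referenced to \cite{kls3} by the paper).

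The gap is in the $L^\infty$ step, and your closing caveat is in fact wrong. You argue via Schauder theory, which requires $A\in C^{0,\alpha}$, and conclude that ``one really needs'' this hypothesis. But the lemma is true as stated, and the paragraph following it in the paper explicitly claims $\|F\|_\infty\le C(d,\kappa)$ for $d\ge 3$, $m=1$ (the scalar case), where $A$ need not be H\"older. So H\"older regularity of $A$ must not be necessary, and a Schauder bootstrap is the wrong tool. The correct route uses only the Caccioppoli inequality and a Morrey-space potential estimate, as follows. Since $u_j:=\chi_j+P_j$ satisfies $\mathrm{div}(A\nabla u_j)=0$ and $\chi_j\in C^{0,\alpha}$, Caccioppoli on a ball $B_r(x_0)$ gives
\begin{equation*}
\int_{B_r(x_0)}|\nabla \chi_j|^2\,dy \le C r^{-2}\int_{B_{2r}(x_0)}|u_j - u_j(x_0)|^2\,dy + Cr^d \le C r^{d-2+2\alpha},
\end{equation*}
i.e.\ $\nabla\chi$ lies in the Morrey space $L^{2,\,d-2+2\alpha}$. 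Hence $b_{ij}^{\alpha\beta}$, which is a bounded matrix applied to $\nabla\chi$ plus a bounded term, also lies in $L^{2,\,d-2+2\alpha}$. Then in the representation $\partial_k f_{ij}^{\alpha\beta}(x)=\int_Y \partial_{y_k}G(x,y)\,b_{ij}^{\alpha\beta}(y)\,dy$ (with $G$ the periodic Green function of $-\Delta$), a dyadic decomposition around $x$ together with $|\nabla G|\lesssim |x-y|^{1-d}$ and the Morrey bound yields a convergent geometric sum $\sum_k (2^{-k})^{\alpha}$, giving $\nabla f\in L^\infty(Y)$ with a bound depending only on $d$, $\kappa$, and the $C^{0,\alpha}$ norm of $\chi$. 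This proves $F\in L^\infty(Y)$ under exactly the stated hypothesis, without any smoothness of $A$.
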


\begin{proof}
See Remark 2.1 in \cite{kls3}.
\end{proof}

By the N. Meyer estimates (see e.g. \cite[p.154]{Giaquinta}), the matrix of correctors $\chi\in W_{\rm per} ^{1,p}(Y)$ for some $p>2$.
It follows that $\chi$ is H\"older continuous if $d=2$.
In the scalar case $(m=1)$, the well known De Giorgi -Nash estimates
also give the H\"older continuity of $\chi$ for $d\ge 3$.
In view of Lemma \ref{lemma-2.0} we may deduce that $\| F^{\alpha\beta}_{kij}\|_\infty\le C$ if
$d=2$ and $m\ge 1$, or $d\ge 3$ and $m=1$,
where $C$ depends only on $d$ and $\kappa$.
If $d\ge 3$ and $m\ge 2$, the functions $F^{\alpha\beta}_{kij}$ (and $\nabla F^{\alpha\beta}_{kij}$)
 are bounded if $A$ is H\"older continuous.

\begin{lemma}\label{lemma-2.1}
Suppose that $A$ satisfies conditions (\ref{ellipticity})-(\ref{periodicity}).
Let $m=1$ and $\Omega$ be a bounded Lipschitz domain.
Then
\begin{equation}\label{estimate-of-Phi}
\| \Phi_{\varep, j}^\beta-P_j^\beta \|_{L^\infty(\Omega)}
\le C \varep,
\end{equation}
where $C$ depends only on $A$.
If $m\ge 2$, the estimate (\ref{estimate-of-Phi}) holds, with $C$ depending only on 
$A$ and $\Omega$, under the additional assumptions that
 $A$ is H\"older continuous
and $\Omega$ is $C^{1,\alpha}$ for some $\alpha\in (0,1)$.
\end{lemma}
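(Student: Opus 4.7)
The natural approximation to the Dirichlet corrector $\Phi_{\varepsilon,j}^\beta$ is the two-scale ansatz $P_j^\beta(x)+\varepsilon\,\chi_j^\beta(x/\varepsilon)$. Accordingly, I would set
\begin{equation*}
w_\varepsilon(x) = \Phi_{\varepsilon,j}^\beta(x) - P_j^\beta(x) - \varepsilon\,\chi_j^\beta(x/\varepsilon).
\end{equation*}
A direct computation from the cell problem (\ref{cell-problem}) shows that $\mathcal{L}_\varepsilon\bigl(P_j^\beta+\varepsilon\chi_j^\beta(\cdot/\varepsilon)\bigr)=0$ pointwise in $\mathbb{R}^d$. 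Combined with $\mathcal{L}_\varepsilon(\Phi_{\varepsilon,j}^\beta)=0$ in $\Omega$, this yields
\begin{equation*}
\mathcal{L}_\varepsilon(w_\varepsilon)=0 \ \text{ in } \Omega, \qquad w_\varepsilon(x)=-\varepsilon\,\chi_j^\beta(x/\varepsilon) \ \text{ on } \partial\Omega.
\end{equation*}
Once $\|w_\varepsilon\|_{L^\infty(\Omega)}\le C\varepsilon$ is established, the triangle inequality together with the boundedness of $\chi$ gives (\ref{estimate-of-Phi}).

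To obtain the boundary data control $\|w_\varepsilon\|_{L^\infty(\partial\Omega)}\le \varepsilon\|\chi\|_{L^\infty(Y)}$ meaningfully, I need $\chi$ to be bounded uniformly on $\mathbb{R}^d$. For $m=1$ and $d\ge 2$, this follows from De Giorgi--Nash estimates applied to the cell problem, yielding Hölder continuity of $\chi_j^\beta$ with a bound depending only on $A$. For $m\ge 2$ with $A$ Hölder continuous, the boundedness (indeed, Hölder continuity) of $\chi$ follows from classical Schauder theory for elliptic systems on the torus.

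The key step, and the main obstacle, is upgrading boundary control of $w_\varepsilon$ to interior control, namely the \emph{uniform} estimate $\|w_\varepsilon\|_{L^\infty(\Omega)}\le C\|w_\varepsilon\|_{L^\infty(\partial\Omega)}$ with $C$ independent of $\varepsilon$. For $m=1$ on a bounded Lipschitz domain, this is immediate from the classical maximum principle for scalar divergence-form operators with bounded measurable coefficients, with $C=1$, and depends only on $\kappa$ (hence only on $A$). For $m\ge 2$ one no longer has a pointwise maximum principle, but on a bounded $C^{1,\alpha}$ domain with $A$ Hölder continuous the Avellaneda--Lin uniform boundary regularity theory provides uniform bounds on the Poisson kernel $P_\varepsilon(x,y)$ for $\mathcal{L}_\varepsilon$, yielding $\|P_\varepsilon(x,\cdot)\|_{L^1(\partial\Omega)}\le C$ uniformly in $\varepsilon$; this implies exactly the desired $L^\infty$--$L^\infty$ estimate for the Dirichlet problem, with $C$ depending on $A$ and $\Omega$.

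Putting the two ingredients together:
\begin{equation*}
\|w_\varepsilon\|_{L^\infty(\Omega)} \le C\|w_\varepsilon\|_{L^\infty(\partial\Omega)} \le C\varepsilon\|\chi\|_{L^\infty(Y)} \le C\varepsilon,
\end{equation*}
and hence $\|\Phi_{\varepsilon,j}^\beta-P_j^\beta\|_{L^\infty(\Omega)} \le \|w_\varepsilon\|_{L^\infty(\Omega)} + \varepsilon\|\chi\|_{L^\infty(Y)} \le C\varepsilon$, with the claimed dependence of the constant on $A$ (and on $\Omega$ in the system case). The whole argument reduces to the two-scale cancellation that is already present in the cell problem, with all the serious work hidden inside the uniform maximum-principle type bound, which is the only nontrivial input in the system case.
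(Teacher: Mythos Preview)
Your proof is correct and follows essentially the same approach as the paper: define $w_\varepsilon=\Phi_{\varepsilon,j}^\beta-P_j^\beta-\varepsilon\chi_j^\beta(\cdot/\varepsilon)$, observe that $\mathcal{L}_\varepsilon(w_\varepsilon)=0$ with boundary data $-\varepsilon\chi_j^\beta(\cdot/\varepsilon)$, and then invoke the scalar maximum principle (for $m=1$) or the uniform Avellaneda--Lin $L^\infty$--$L^\infty$ bound (for $m\ge 2$) together with the boundedness of $\chi$. The paper cites \cite[p.~805, Theorem~3]{AL-1987} directly for the system case rather than going through Poisson kernel bounds, but this is the same input.
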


\begin{proof}
This is proved in \cite[Proposition 2.4]{kls3} by considering the function
$u_\varep =\Phi_{\varep, j}^\beta (x) -P_j^\beta (x)-\varep \chi_j^\beta(x/\varep)$.
Notice that $\mathcal{L}(u_\varep)=0$ in $\Omega$ and
$u_\varep =-\varep\chi^\beta_j(x/\varep)$ on $\partial\Omega$.
In the scalar case one may use the maximum principle and boundedness of
$\chi$ to show that
$\|u_\varep\|_{L^\infty(\Omega)}\le \|u_\varep\|_{L^\infty(\partial\Omega)}
\le C \varep$.
This implies that $\|\Phi_{\varep, j}^\beta-P_j^\beta\|_{L^\infty(\Omega)}\le C \varep$.
 If $m\ge 2$, under the additional assumptions that $A$ is H\"older continuous  and
$\Omega$ is $C^{1, \alpha}$, we know that $\chi$ is bounded and
 $\|u_\varep\|_{L^\infty(\Omega)}
\le C\,  \| u_\varep\|_{L^\infty(\partial\Omega)}$ (see \cite[p.805, Theorem 3]{AL-1987}).
This again gives (\ref{estimate-of-Phi}).
\end{proof}

\begin{lemma}\label{lemma-2.2}
Suppose that $u_\varep\in H^1(\Omega;\br^m), u_0\in H^2(\Omega;\br^m)$, and $\mathcal{L}_\varep (u_\varep)
=\mathcal{L}_0(u_0)$ in $\Omega$.
Let
\begin{equation}\label{definition-of-w}
w_\varep (x) =u_\varep (x)-u_0 (x) -\big\{ \Phi_{\varep, j}^\beta (x) -P_j^\beta (x)\big\}
\cdot \frac{\partial u_0^\beta}{\partial x_j}.
\end{equation}
Then
\begin{equation}\label{formula-2.1}
\aligned
\big(\mathcal{L}_\varep (w_\varep)\big)^\alpha
=& \varep \frac{\partial}{\partial x_i}
\left\{ \left[ F_{jik}^{\alpha\gamma} \left({x}/{\varep}\right)
\right]
\frac{\partial^2 u_0^\gamma}{\partial x_j\partial x_k}\right\}\\
&+\frac{\partial}{\partial x_i}
\left\{ a_{ij}^{\alpha\beta}\left({x}/{\varep}\right)
\left[ \Phi_{\varep, k}^{\beta\gamma}(x) 
-x_k \delta^{\beta\gamma}\right]
\frac{\partial^2 u_0^\gamma}{\partial x_j\partial x_k}\right\}\\
&
+a_{ij}^{\alpha\beta} \left({x}/{\varep}\right)
\frac{\partial}{\partial x_j}
\left[ \Phi_{\varep, k}^{\beta\gamma}(x)
-x_k \delta^{\beta\gamma}
-\varep \chi_k^{\beta\gamma}\left({x}/{\varep}\right) \right]
\frac{\partial^2 u_0^\gamma}{\partial x_i\partial x_k},
\endaligned
\end{equation}
where $\delta^{\beta\gamma}=1$ if $\beta=\gamma$, and zero otherwise.
\end{lemma}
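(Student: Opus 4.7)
The proof is a direct computation organized around three facts: the hypothesis $\mathcal{L}_\varep u_\varep = \mathcal{L}_0 u_0$, the identity $(\mathcal{L}_\varep(\Phi_{\varep,j}^\beta - P_j^\beta))^\alpha = \partial_{x_i}[a_{ij}^{\alpha\beta}(x/\varep)]$ (which follows at once from $\mathcal{L}_\varep \Phi_{\varep,j}^\beta = 0$ built into (\ref{definition-of-Phi})), and the flux identity of Lemma \ref{lemma-2.0}. Writing $\psi_j^\beta := \Phi_{\varep,j}^\beta - P_j^\beta$ for brevity, I would begin by distributing $\mathcal{L}_\varep$ over (\ref{definition-of-w}) and expanding the product by Leibniz twice. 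The pieces in which both interior derivatives fall on $\psi_j^\beta$ assemble into $(\mathcal{L}_\varep \psi_j^{\cdot\beta})^\alpha \cdot \partial u_0^\beta/\partial x_j = \partial_{x_i}[a_{ij}^{\alpha\beta}(x/\varep)] \cdot \partial u_0^\beta/\partial x_j$, and this cancels exactly the analogous ``derivative-of-$a$'' piece arising from $-(\mathcal{L}_\varep u_0)^\alpha = \partial_{x_i}[a_{ij}^{\alpha\beta}(x/\varep) \partial u_0^\beta/\partial x_j]$. After combining with $(\mathcal{L}_0 u_0)^\alpha = -\hat a_{ij}^{\alpha\beta} \partial^2 u_0^\beta/\partial x_i \partial x_j$, what survives is
\begin{equation*}
(\mathcal{L}_\varep w_\varep)^\alpha = \bigl[a_{ij}^{\alpha\beta}(x/\varep) - \hat a_{ij}^{\alpha\beta} + a_{ik}^{\alpha\gamma}(x/\varep) \tfrac{\partial \psi_j^{\gamma\beta}}{\partial x_k}\bigr] \tfrac{\partial^2 u_0^\beta}{\partial x_i \partial x_j} + \tfrac{\partial}{\partial x_i}\bigl[a_{ik}^{\alpha\gamma}(x/\varep) \psi_j^{\gamma\beta} \tfrac{\partial^2 u_0^\beta}{\partial x_k \partial x_j}\bigr],
\end{equation*}
and a harmless relabelling of the dummy indices $j\leftrightarrow k$, $\beta\leftrightarrow\gamma$ identifies the last divergence with the second line of (\ref{formula-2.1}).

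\textbf{Identification of the coefficient.} To match the first and third lines of (\ref{formula-2.1}) I would split the corrector difference as $\psi_j^{\gamma\beta} = \varep \chi_j^{\gamma\beta}(x/\varep) + \bigl(\Phi_{\varep,j}^{\gamma\beta} - x_j \delta^{\gamma\beta} - \varep \chi_j^{\gamma\beta}(x/\varep)\bigr)$. By definition (\ref{definition-of-B}) of $b_{ij}^{\alpha\beta}$, the $\varep\chi$ contribution turns the bracket coefficient above into precisely $-b_{ij}^{\alpha\beta}(x/\varep)$, while the remainder, after the same relabelling, is exactly the third line of (\ref{formula-2.1}). It remains to convert $-b_{ij}^{\alpha\beta}(x/\varep) \partial^2 u_0^\beta/\partial x_i \partial x_j$ into the first line. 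By Lemma \ref{lemma-2.0}, $b_{ij}^{\alpha\beta}(x/\varep) = \varep \partial_{x_k}[F_{kij}^{\alpha\beta}(x/\varep)]$, so the product rule yields
\begin{equation*}
-b_{ij}^{\alpha\beta}(x/\varep) \frac{\partial^2 u_0^\beta}{\partial x_i \partial x_j} = -\varep \frac{\partial}{\partial x_k}\Bigl[F_{kij}^{\alpha\beta}(x/\varep) \frac{\partial^2 u_0^\beta}{\partial x_i \partial x_j}\Bigr] + \varep F_{kij}^{\alpha\beta}(x/\varep) \frac{\partial^3 u_0^\beta}{\partial x_k \partial x_i \partial x_j}.
\end{equation*}
The final summand vanishes because $F_{kij}^{\alpha\beta} = -F_{ikj}^{\alpha\beta}$ is antisymmetric in $(k,i)$ while $\partial_k \partial_i u_0^\beta$ is symmetric; swapping the dummy indices $k\leftrightarrow i$ in the remaining divergence and applying the antisymmetry once more brings the expression to $\varep \partial_{x_i}[F_{jik}^{\alpha\gamma}(x/\varep) \partial^2 u_0^\gamma/\partial x_j \partial x_k]$, which is the first line of (\ref{formula-2.1}).

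\textbf{Main obstacle.} The only delicate point is the meaning of the ``$\partial^3 u_0$'' cancellation in the display above when $u_0$ lies only in $H^2(\Omega;\br^m)$: either the identity is interpreted in $\mathcal{D}'(\Omega)$ (the symmetric-times-antisymmetric pairing vanishes as a distribution whenever $F$ is bounded and $\nabla^2 u_0 \in L^2$), or one approximates $u_0$ by smooth functions, verifies (\ref{formula-2.1}) classically, and passes to the $H^2$-limit, since every summand on the right-hand side of (\ref{formula-2.1}) depends continuously on $u_0$ in $H^2$. Apart from this regularity technicality, the proof is pure index bookkeeping; the main effort is keeping the four running indices $\alpha,\beta,\gamma$ and $i,j,k$ straight while repeatedly applying the antisymmetry of $F$ in tandem with the symmetry of mixed partials.
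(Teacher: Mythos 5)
Your computation is correct and reconstructs the argument that the paper handles by citation (the paper proves Lemma \ref{lemma-2.2} simply by invoking Proposition 2.2 of \cite{kls3} with $V_{\varep,j}^\beta=\Phi_{\varep,j}^\beta$). The key steps — cancelling the first-order terms via $\mathcal{L}_\varep(\Phi_{\varep,j}^\beta-P_j^\beta)=-\mathcal{L}_\varep(P_j^\beta)$, splitting $\Phi_{\varep,k}^{\beta\gamma}-x_k\delta^{\beta\gamma}=\varep\chi_k^{\beta\gamma}(x/\varep)+h_\varep$ to isolate $-b_{ij}^{\alpha\beta}(x/\varep)\,\partial^2_{ij}u_0^\beta$, then using Lemma \ref{lemma-2.0} together with the antisymmetry of $F_{kij}^{\alpha\beta}$ in $(k,i)$ against the symmetry of mixed partials to fold the $b$-term into a divergence — are exactly what is needed, your index relabellings check out, and the regularity caveat you flag (interpret in $\mathcal{D}'(\Omega)$ or approximate $u_0$) is the standard way to justify the step when $u_0\in H^2$ only.
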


\begin{proof}
This follows from Proposition 2.2 in \cite{kls3} by taking $V_{\varep, j}^\beta (x)=\Phi_{\varep, j}^\beta (x)$.
\end{proof}

\begin{thm}\label{H-1-theorem}
Suppose that $A$ satisfies (\ref{ellipticity})-(\ref{periodicity}).
If $m\ge 2$,  assume further that $A$ is H\"older continuous.
Let $\Omega$ be a $C^{1,1}$ domain in $\br^d$.
For $\varep\ge 0$ and $f\in L^2(\Omega;\br^m)$,
 let $u_\varep$ be the unique weak solution in $H^1_0(\Omega;\br^m)$ to the elliptic
system $\mathcal{L}_\varep (u_\varep)=f$ in $\Omega$.
Then
\begin{equation}\label{H-1-estimate}
\big\| u_\varep -u_0 -\big\{ \Phi^\beta_{\varep, j} -P_j^\beta\big\} \frac{\partial u_0^\beta}{\partial x_j}
\big\|_{H^1_0(\Omega)}
\le C  \varep \, \| f  \|_{L^2(\Omega)},
\end{equation}
where $C$ depends only on $A$ and $\Omega$.
\end{thm}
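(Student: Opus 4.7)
The plan is to use ellipticity to bound $\|\nabla w_\varep\|_{L^2(\Omega)}^2$ by the pairing $\int_\Omega \mathcal{L}_\varep w_\varep \cdot w_\varep\,dx$, expand the latter via Lemma \ref{lemma-2.2}, and bound each of the three resulting terms by $C\varep\|f\|_{L^2}\|\nabla w_\varep\|_{L^2}$. First note that $w_\varep\in H_0^1(\Omega;\br^m)$, since $u_\varep$ and $u_0$ vanish on $\partial\Omega$ and, by the defining property (\ref{definition-of-Phi}) of the Dirichlet corrector, $\Phi_{\varep,j}^\beta-P_j^\beta=0$ on $\partial\Omega$. Hence no boundary contribution appears in
\[
\kappa\|\nabla w_\varep\|_{L^2(\Omega)}^2 \le \int_\Omega A(x/\varep)\nabla w_\varep\cdot\nabla w_\varep\,dx = \int_\Omega \mathcal{L}_\varep w_\varep\cdot w_\varep\,dx,
\]
and the $C^{1,1}$ hypothesis on $\Omega$ gives the $H^2$-regularity bound $\|u_0\|_{H^2(\Omega)}\le C\|f\|_{L^2(\Omega)}$ for the constant-coefficient problem $\mathcal{L}_0 u_0=f$.

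The first two pieces of $\mathcal{L}_\varep w_\varep$ in Lemma \ref{lemma-2.2} are divergences with small coefficients. For $T_1=\varep\,\partial_i\{F_{jik}^{\alpha\gamma}(x/\varep)\,\partial_j\partial_k u_0^\gamma\}$, integrating against $w_\varep$ by parts and using $\|F\|_\infty\le C$ from Lemma \ref{lemma-2.0} (applicable since either $m=1$ or $A$ is H\"older continuous) yields $|\int T_1\,w_\varep\,dx|\le C\varep\|f\|_{L^2}\|\nabla w_\varep\|_{L^2}$. For $T_2=\partial_i\{a_{ij}^{\alpha\beta}(x/\varep)[\Phi_{\varep,k}^{\beta\gamma}-x_k\delta^{\beta\gamma}]\partial_j\partial_k u_0^\gamma\}$, the same integration by parts combined with $\|\Phi_{\varep,k}-P_k\|_\infty\le C\varep$ from Lemma \ref{lemma-2.1} produces the same bound.

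The main obstacle is the remaining non-divergence piece $T_3=a_{ij}^{\alpha\beta}(x/\varep)\,\partial_j\Psi_\varep^{\beta\gamma}\,\partial_i\partial_k u_0^\gamma$, where $\Psi_\varep^{\beta\gamma}:=\Phi_{\varep,k}^{\beta\gamma}-x_k\delta^{\beta\gamma}-\varep\chi_k^{\beta\gamma}(x/\varep)$. The crucial observation is that $\mathcal{L}_\varep\Psi_\varep=0$ in $\Omega$: indeed $\mathcal{L}_\varep\Phi_{\varep,k}=0$ by (\ref{definition-of-Phi}), and the cell problem (\ref{cell-problem}) yields $\mathcal{L}_\varep[P_k+\varep\chi_k(x/\varep)]=0$. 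Consequently $\partial_i[a_{ij}(x/\varep)\partial_j\Psi_\varep]=0$, and integrating by parts in $\partial_i$ (the boundary term vanishing because $w_\varep=0$ on $\partial\Omega$) produces
\[
\int_\Omega T_3\,w_\varep\,dx = -\int_\Omega a_{ij}(x/\varep)\,\partial_j\Psi_\varep^{\beta\gamma}\,\partial_k u_0^\gamma\,\partial_i w_\varep^\alpha\,dx.
\]
I would then use the algebraic identity $a_{ij}(x/\varep)\partial_j\Psi_\varep=[a_{ij}(x/\varep)\partial_j\Phi_{\varep,k}-\hat a_{ik}]+b_{ik}(x/\varep)$, a consequence of the definitions (\ref{homogenized-coefficient}) and (\ref{definition-of-B}). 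The $b(x/\varep)$-contribution becomes $O(\varep\|f\|_{L^2}\|\nabla w_\varep\|_{L^2})$ after a further integration by parts based on $b=\partial_l F_{lik}$ with $F_{lik}=-F_{ilk}$ (Lemma \ref{lemma-2.0}); the skew-symmetry of $F$ annihilates the pieces that would otherwise involve $\nabla^3 u_0$ or $\nabla A$. The remaining divergence-free piece $a_{ij}(x/\varep)\partial_j\Phi_{\varep,k}-\hat a_{ik}$ is handled by combining the weak form of $\mathcal{L}_\varep\Phi_{\varep,k}^\gamma=0$, tested against $w_\varep\,\partial_k u_0^\gamma$, with the equation $\mathcal{L}_0 u_0=f$; the resulting cancellation leaves another $O(\varep)$ residual.

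The hard point will be executing the second-stage integrations by parts rigorously, since we only have $u_0\in H^2$ (so $\nabla^3 u_0$ is unavailable) and, in the scalar case, only $A\in L^\infty$ (so $\nabla A$ is unavailable); the manipulation must be organized so that both objects appear only in combinations which vanish identically by the skew-symmetry $F_{lik}=-F_{ilk}$. A density argument ($u_0^{(\eta)}\in C^\infty$ with $u_0^{(\eta)}\to u_0$ in $H^2$) then justifies the limit passage. Summing the three bounds and dividing through by $\|\nabla w_\varep\|_{L^2}$ yields (\ref{H-1-estimate}).
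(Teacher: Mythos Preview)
Your treatment of $T_1$ and $T_2$ matches the paper, and you correctly identify the crucial structural fact $\mathcal{L}_\varep \Psi_\varep=0$, which is also the heart of the paper's argument. The gap is in how you exploit it for $T_3$.

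After your first integration by parts you arrive at $\int T_3\,w_\varep = -\int a_{ij}\,\partial_j\Psi_k\,\partial_k u_0\,\partial_i w_\varep$, and you then split $a_{ij}\partial_j\Psi_k=(a_{ij}\partial_j\Phi_{\varep,k}-\hat a_{ik})+b_{ik}$. The $b$--piece is fine. But the ``divergence-free'' piece does not yield an $O(\varep)$ bound by the route you sketch. Testing $\mathcal{L}_\varep\Phi_{\varep,k}=0$ against $w_\varep\,\partial_k u_0$ and combining with $\mathcal{L}_0 u_0=f$ gives
\[
-\int (a_{ij}\partial_j\Phi_{\varep,k}-\hat a_{ik})\,\partial_k u_0\,\partial_i w_\varep
=\int a_{ij}\,\partial_j\Phi_{\varep,k}\,w_\varep\,\partial_i\partial_k u_0\,+\,\int f\cdot w_\varep,
\]
and both terms on the right are only $O(\|f\|_{L^2}\|\nabla w_\varep\|_{L^2})$, with no factor of $\varep$. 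If you push further and rewrite $a_{ij}\partial_j\Phi_{\varep,k}=a_{ij}\partial_j\Psi_k+\hat a_{ik}-b_{ik}$, the $\hat a$--term cancels $\int f\cdot w_\varep$ and you are left with $\int T_3\,w_\varep$ again plus a $b$--term; the computation is circular (it reduces to the identity $\int b_{ik}\,\partial_i(w_\varep\partial_k u_0)=0$, which follows from $\partial_i b_{ik}=0$). In the scalar case without smoothness you also cannot invoke $\|\nabla\Phi_\varep\|_\infty\le C$.

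The paper uses $\mathcal{L}_\varep\Psi_\varep=0$ differently: it bounds $|\int T_3\,w_\varep|\le C\int|\nabla\Psi|\,|\nabla^2 u_0|\,|w_\varep|$ directly (no preliminary integration by parts), and then proves the Caccioppoli-type inequality
\[
\int_\Omega |\nabla\Psi_\varep|^2\,|w_\varep|^2\,dx\;\le\;C\varep^2\int_\Omega|\nabla w_\varep|^2\,dx
\]
by testing $\mathcal{L}_\varep\Psi_\varep=0$ against $\Psi_\varep|w_\varep|^2$ and using $\|\Psi_\varep\|_\infty\le C\varep$. This is where the factor $\varep$ actually comes from, and it is the step your outline is missing. (The paper also works first with $f\in C_0^\infty$ so that $w_\varep\in H_0^1\cap L^\infty$, which is needed to justify these manipulations; your density argument should be in $f$, not just in $u_0$.)
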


\begin{proof}
Under the assumption that $A$ satisfies (\ref{ellipticity})-(\ref{periodicity}) and is H\"older continuous,
the estimate (\ref{H-1-estimate}) is a special case of the convergence  estimates 
in $W^{1,p}_0(\Omega;\br^m)$ for $1<p<\infty$,
 proved in 
\cite[Theorem 3.7]{kls3}.
We give a direct proof here, which covers the case $m=1$ without the smoothness condition.

Let $w_\varep$ be given by (\ref{definition-of-w}).
We first consider the case $f\in C_0^\infty(\br^d;\br^m)$.
In this case it is easy to see that under the assumptions in the theorem,
$w_\varep\in H_0^1(\Omega;\br^m)\cap L^\infty(\Omega;\br^m)$.
It follows from (\ref{formula-2.1}) that
\begin{equation}\label{2.4-1}
\aligned
\kappa\int_\Omega |\nabla w_\varep|^2\, dx
 \le C \varep \int_\Omega &  |\nabla^2 u_0|\, |\nabla w_\varep|\, dx\\
&+C \int_\Omega | \nabla \big\{
\Phi_\varep (x) -P(x) -\varep \chi(x/\varep)\big\}|\, 
|\nabla^2 u_0|\, |w_\varep|\, dx,
\endaligned
\end{equation}
where $\Phi_\varep =\big( \Phi_{\varep, j}^\beta\big)$,
$P=\big(P_j^\beta\big)$, and
we have used estimates $\|F_{kij}^{\alpha\beta}\|_\infty\le C$
in Lemma \ref{lemma-2.0} and $\|\Phi_\varep-P\|_\infty\le C \varep$ in Lemma \ref{lemma-2.1}.
By the Cauchy inequality this implies that
\begin{equation}\label{2.4-3}
\aligned
\int_\Omega
|\nabla w_\varep|^2\, dx
\le \frac{C \varep^2}{\delta} & \int_\Omega |\nabla^2 u_0|^2\, dx\\
& +\frac{\delta}{\varep^2}
\int_\Omega |\nabla \big\{ \Phi_\varep (x) -P (x) -\varep\chi (x/\varep)\big\}|^2\, |w_\varep|^2\, dx
\endaligned
\end{equation}
for any $\delta\in (0,1)$.
We claim that
\begin{equation}\label{2.4-5}
\int_\Omega
 |\nabla \big\{ \Phi_\varep (x) -P (x) -\varep\chi (x/\varep)\big\}|^2\, |w_\varep|^2 \, dx
\le C_0 \varep^2 \int_\Omega |\nabla w_\varep|^2\, dx.
\end{equation}
By choosing $\delta>0$ so small that $C_0 \delta<(1/2)$, 
we may deduce from (\ref{2.4-3}) and (\ref{2.4-5}) that
$$
\| w_\varep\|_{H^1_0(\Omega)}
\le C\, \|\nabla w_\varep\|_{L^2(\Omega)} \le C \varep \, \| \nabla^2 u_0\|_{L^2(\Omega)}
\le C \varep \, \|f\|_{L^2(\Omega)}.
$$

To see (\ref{2.4-5}), we fix $1\le \beta_0\le m$ and $1\le j_0\le d$ and let
$$
h_\varep (x)=\Phi_{\varep, j_0}^{\beta_0}(x) -P_{j_0}^{\beta_0} (x) -\varep \chi_{j_0}^{\beta_0} (x/\varep) \quad \text{ in } 
\Omega.
$$
Note that $h_\varep\in H^1(\Omega; \br^m)\cap L^\infty(\Omega;\br^m)$ and
$\mathcal{L}_\varep (h_\varep)=0$ in $\Omega$. It follows that
\begin{equation}\label{2.4-7}
\aligned
\kappa \int_\Omega |\nabla h_\varep|^2\, |w_\varep|^2\, dx
& \le \int_\Omega
a_{ij}^{\alpha\beta}(x/\varep) \frac{\partial h_\varep^\alpha}{\partial x_i}
\cdot \frac{\partial h_\varep^\beta}{\partial x_j}\, |w_\varep|^2\, dx\\
& =-2\int_\Omega
h_\varep^\alpha \cdot a_{ij}^{\alpha\beta} (x/\varep) \frac{\partial h_\varep^\beta}{\partial x_j}\cdot
\frac{\partial w_\varep^\gamma}{\partial x_i} \, w_\varep^\gamma\, dx.
\endaligned
\end{equation}
Hence,
\begin{equation}\label{2.4-9}
\int_\Omega |\nabla h_\varep|^2 |w_\varep|^2\, dx
\le C \int_\Omega |h_\varep|\, |\nabla h_\varep| \, |\nabla w_\varep|\, |w_\varep|\, dx,
\end{equation}
where $C$ depends only on $d$ and $\kappa$.
Estimate (\ref{2.4-5}) now follows from (\ref{2.4-9})
by the Cauchy inequality and
the fact that $\|h_\varep\|_\infty \le C \varep$.

Finally, suppose $f\in L^2(\Omega;\br^m)$.
Choose a sequence of functions $\{ f_\ell\}$ in $C_0^\infty (\Omega;\br^m)$ such that
$f_\ell \to f$ in $L^2(\Omega;\br^m)$.
Let $w_{\varep, \ell}$ be defined by (\ref{definition-of-w}), but with $f$ replaced by $f_\ell$.
Since $$
\| w_{\varep, j} -w_{\varep, \ell } \|_{H^1_0(\Omega)}\le C \varep \, \| f_j -f_\ell\|_{L^2(\Omega)},
$$
it follows that $w_{\varep, \ell} \to \widetilde{w}$ in $H_0^1(\Omega;\br^m)$
as $\ell \to \infty$, and $\|\widetilde{w}\|_{H^1_0(\Omega)} \le C \varep\| f\|_{L^2(\Omega)}$.
However, it is not hard to verify that $w_{\varep, \ell} \to w_\varep$ in $L^2(\Omega;\br^m)$.
As a result we may conclude that $w_\varep =\widetilde{w}\in H_0^1(\Omega;\br^m)$
and the estimate (\ref{H-1-estimate}) holds.
This completes the proof.
\end{proof}

\begin{remark}\label{remark-2.1}
{\rm
Let $m=1$ and $\Omega$ be a bounded Lipschitz domain.
An inspection of the proof of Theorem \ref{H-1-theorem} shows that
 the estimate (\ref{H-1-estimate}) continues to hold
as long as one has $\|\nabla^2 u_0\|_{L^2(\Omega)}
\le C\, \| f\|_{L^2(\Omega)}$ and $\nabla u_0\in L^\infty(\Omega;\br^m)$ for $f\in C_0^\infty(\Omega; \br^m)$.
Consequently, the estimate  (\ref{H-1-estimate}) holds in the scalar case, if $\Omega$ is convex
and $A$ satisfies (\ref{ellipticity}) and (\ref{periodicity}).
}
\end{remark}

\begin{remark}\label{remark-2.2}
{\rm
Since
$$
\aligned
\frac{\partial}{\partial x_i}
\bigg\{ u_\varep -u_0 &  -\left\{ \Phi_{\varep, j}^\beta -P_j^\beta\right\}
 \frac{\partial u_0^\beta}{\partial x_j} \bigg\}
\\
& =\frac{\partial u_\varep}{\partial x_i}
-\frac{\partial}{\partial x_i} \left\{ \Phi_{\varep, j}^\beta \right\}\cdot \frac{\partial u_0^\beta}{\partial x_j}-
\left\{ \Phi_{\varep, j}^\beta -P_j^\beta\right\} 
\frac{\partial^2 u_0^\beta}{\partial x_i\partial x_j},
\endaligned
$$
it follows from (\ref{H-1-estimate}) and (\ref{estimate-of-Phi}) as well as the estimate $\|\nabla^2 u_0\|_{L^2(\Omega)}
\le C\, \| f\|_{L^2(\Omega)}$ that
\begin{equation}\label{estimate-2.5}
\big\| 
\frac{\partial u_\varep}{\partial x_i}
-\frac{\partial}{\partial x_i} \left\{ \Phi_{\varep, j}^\beta \right\}\cdot \frac{\partial u_0^\beta}{\partial x_j}\big\|_{L^2(\Omega)}
\le C\varep \| f\|_{L^2(\Omega)}.
\end{equation}
}
\end{remark}

%
%
%
%
%
%

\section{Convergence rates for eigenvalues}

The goal of this section is to prove Theorem \ref{theorem-A}.
For $\varep\ge 0$ and $f\in L^2(\Omega; \br^m)$, under
conditions (\ref{ellipticity}) and (\ref{periodicity}),
the elliptic 
system $\mathcal{L}_\varep (u_\varep) =f$ in $\Omega$
has a unique (weak) solution in $H^1_0(\Omega; \br^m)$ .
Define $T_\varep (f)=u_\varep$. 
Since $\|u_\varep\|_{H^1_0(\Omega)} \le C\, \| f\|_{L^2(\Omega)}$, where $C$ depends only on
$\kappa$ and $\Omega$, the linear operator 
 $T_\varep$ is bounded, positive, and compact  on $L^2(\Omega; \br^m)$.
Under the symmetry condition $A^*=A$, the operator $T_\varep$ is also self-adjoint.
Let 
\begin{equation}\label{eigenvalue}
\mu_{\varep, 1}\ge \mu_{\varep, 2}\ge \cdots \ge \mu_{\varep, k} \ge \cdots >0
\end{equation}
be the sequence of eigenvalues, in a decreasing order, of $T_\varep$.
By the mini-max principle,
\begin{equation}\label{mini-max}
\mu_{\varep, k}
=\min_{\substack{f_1, \cdots, f_{k-1}\\ \in L^2(\Omega; \br^m)}}\
\max_{\substack{ \| f\|_{L^2(\Omega)}=1\\ f\perp f_i\\ i=1, \dots, k-1}}
 <T_\varep (f), f>,
\end{equation}
where $<\, , \, >$ denotes the inner product in $L^2(\Omega; \br^m)$.
Note that
\begin{equation}\label{bi-linear-form}
<T_\varep(f), f>
=<u_\varep, f>
=\int_\Omega a_{ij}^{\alpha\beta}(x/\varep) \frac{\partial u^\alpha}{\partial x_i} \cdot \frac{\partial u^\beta}{\partial x_j}\, dx
\end{equation}
(if $\varep =0$, $a_{ij}^{\alpha\beta}(x/\varep)$ is replaced by $\hat{a}_{ij}^{\alpha\beta}$).

Let $\{ \phi_{\varep, k} \}$ be an orthonormal basis of $L^2(\Omega; \br^m)$, where $\phi_{\varep, k}$ is 
an eigenfunctions associated with $\mu_{\varep, k}$.
Let $V_{\varep, 0}=\{ 0\}$ and $V_{\varep, k}$ be the subspace of $L^2(\Omega; \br^m)$
 spanned by $\{\phi_{\varep, 1}, \dots, \phi_{\varep, k}\}$ for $k\ge 1$.
Then
\begin{equation}\label{3.1-1}
\mu_{\varep, k}=\max_{\substack{f\perp V_{\varep, k-1}\\ \| f\|_{L^2(\Omega)}=1}}
<T_\varep (f), f>.
\end{equation}
Let $\lambda_{\varep, k}= (\mu_{\varep, k})^{-1}$.
Then $\{\lambda_{\varep, k}\}$ is the sequence of Dirichlet eigenvalues in an increasing order
of $\mathcal{L}_\varep$ in $\Omega$.

\begin{lemma}\label{lemma-3.1}.
Suppose that $A$ satisfies (\ref{ellipticity})-(\ref{periodicity}) and the symmetry condition
$A^*=A$.
Then
$$
|\mu_{\varep, k} -\mu_{0, k}|
\le \max \left\{
\max_{\substack{f\perp V_{0, k-1} \\ \| f\|_{L^2(\Omega)}=1}}
|<(T_\varep-T_0 )f, f>|,\, 
\max_{\substack{f\perp V_{\varep, k-1} \\ \| f\|_{L^2(\Omega)}=1}}
|<(T_\varep-T_0 )f, f>|\right\}
$$
for any $\varep>0$.
\end{lemma}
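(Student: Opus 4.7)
The plan is to derive the two-sided bound on $\mu_{\varep,k}-\mu_{0,k}$ by applying the minimax principle (\ref{mini-max}) in one direction and the ``dual'' characterization (\ref{3.1-1}) in the other, using the two natural choices of $(k-1)$-dimensional comparison subspaces: $V_{0,k-1}$ to bound $\mu_{\varep,k}$ from above and $V_{\varep,k-1}$ to bound $\mu_{0,k}$ from above.

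\textbf{Step 1 (upper bound on $\mu_{\varep,k}-\mu_{0,k}$).} In (\ref{mini-max}) I take the admissible choice $(f_1,\dots,f_{k-1})=(\phi_{0,1},\dots,\phi_{0,k-1})$, giving
\[
\mu_{\varep,k}\ \le\ \max_{\substack{f\perp V_{0,k-1}\\ \|f\|_{L^2(\Omega)}=1}}\langle T_\varep f,f\rangle .
\]
On the other hand, (\ref{3.1-1}) applied to $T_0$ yields
\[
\mu_{0,k}\ =\ \max_{\substack{f\perp V_{0,k-1}\\ \|f\|_{L^2(\Omega)}=1}}\langle T_0 f,f\rangle .
\]
Subtracting and using the elementary inequality $\max_X F-\max_X G\le \max_X(F-G)$ (obtained by evaluating $G$ at the maximizer of $F$), I get
\[
\mu_{\varep,k}-\mu_{0,k}\ \le\ \max_{\substack{f\perp V_{0,k-1}\\ \|f\|_{L^2(\Omega)}=1}}\langle (T_\varep-T_0)f,f\rangle .
\]

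\textbf{Step 2 (lower bound on $\mu_{\varep,k}-\mu_{0,k}$).} I now reverse the roles of $\varep$ and $0$: applying (\ref{mini-max}) for $T_0$ with the test subspace $V_{\varep,k-1}$ and (\ref{3.1-1}) for $T_\varep$, the same argument produces
\[
\mu_{0,k}-\mu_{\varep,k}\ \le\ \max_{\substack{f\perp V_{\varep,k-1}\\ \|f\|_{L^2(\Omega)}=1}}\langle (T_0-T_\varep)f,f\rangle .
\]

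\textbf{Step 3 (combining).} Since $\langle(T_0-T_\varep)f,f\rangle\le|\langle(T_\varep-T_0)f,f\rangle|$ and likewise for the opposite sign, Steps~1--2 together give the stated bound on $|\mu_{\varep,k}-\mu_{0,k}|$. Self-adjointness of $T_\varep$ and $T_0$ (guaranteed by $A^*=A$) is used implicitly to ensure that the minimax formulas hold and that the subspaces $V_{\varep,k-1}$, $V_{0,k-1}$ are genuine eigenspaces on which these extremal characterizations are attained.

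I do not anticipate a main obstacle here; the lemma is essentially a formal manipulation of the minimax principle for compact self-adjoint operators, and all analytical content is already packaged into the characterizations (\ref{mini-max}) and (\ref{3.1-1}). The only point requiring a line of care is the inequality $\max F-\max G\le\max(F-G)$, which I would record explicitly since it is the mechanism transferring a difference of maxima into a maximum of differences $\langle(T_\varep-T_0)f,f\rangle$ that can later be estimated via the $H^1$ convergence theorem of Section~2.
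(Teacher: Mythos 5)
Your proof is correct and follows essentially the same route as the paper: apply the minimax characterization (\ref{mini-max}) with the test subspace $V_{0,k-1}$ (respectively $V_{\varep,k-1}$), invoke the dual identity (\ref{3.1-1}), and use subadditivity of the maximum to pass from a difference of maxima to the maximum of $\langle (T_\varep - T_0)f,f\rangle$. The paper phrases the key step as $\max_X(F)\le \max_X(F-G)+\max_X(G)$ while you phrase it as $\max_X F - \max_X G \le \max_X(F-G)$, but these are the same inequality.
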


\begin{proof}
It follows from (\ref{mini-max}) that
$$
\aligned
\mu_{\varep, k} & \le \max_{\substack{f\perp V_{0, k-1}\\ \| f\|_{L^2(\Omega)}=1}}
<T_\varep (f), f>\\
&\le \max_{\substack{f\perp V_{0, k-1}\\ \| f\|_{L^2(\Omega)}=1}}
< (T_\varep-T_0)( f), f>
+
\max_{\substack{f\perp V_{0, k-1}\\ \| f\|_{L^2(\Omega)}=1}}
< T_0 (f), f>\\
& =\max_{\substack{f\perp V_{0, k-1}\\ \| f\|_{L^2(\Omega)}=1}}
< (T_\varep-T_0) (f), f>
+\mu_{0,k},
\endaligned
$$
where we have used (\ref{3.1-1}). Hence,
\begin{equation}\label{3.1-3}
\mu_{\varep, k}-\mu_{0, k}
\le \max_{\substack{f\perp V_{0, k-1}\\ \| f\|_{L^2(\Omega)}=1}}
< (T_\varep-T_0) (f), f>.
\end{equation}
Similarly, one can show that
\begin{equation}\label{3.1-5}
\mu_{0, k}-\mu_{\varep, k}
\le \max_{\substack{f\perp V_{\varep, k-1}\\ \| f\|_{L^2(\Omega)}=1}}
< (T_0-T_\varep) (f), f>.
\end{equation}
The desired estimate follows readily from (\ref{3.1-3}) and (\ref{3.1-5}).
\end{proof}

It follows from Lemma \ref{lemma-3.1} that
\begin{equation}\label{3.2-1}
|\mu_{\varep, k}-\mu_{0,k}|\le \| T_\varep -T_0\|_{L^2\to L^2}.
\end{equation}
Under the assumptions in Theorem \ref{theorem-A}, it is known that 
$\| u_\varep -u_0\|_{L^2(\Omega)}\le C\varep \| f\|_{L^2(\Omega)}$, where $C$ depends on
$A$ and $\Omega$. Hence
$\|T_\varep -T_0\|_{L^2\to L^2} \le C \varep$, which
 implies that $|\mu_{\varep, k}-\mu_{\varep, 0}|\le C\varep$.
It follows that
$$
|\lambda_{\varep, k}-\lambda_{0, k} |\le C \varep \lambda_{0,k}\lambda_{\varep, k}.
$$
By the mini-max principle and Weyl's asymptotic,
 $\lambda_{\varep, k}\approx \lambda_{0,k}\approx k^{\frac{2}{dm}}$.
 As a result, we obtain
 \begin{equation}\label{3.2-3}
 |\lambda_{\varep, k}-\lambda_{0, k}|\le C \varep (\lambda_{0, k})^2 \le C \varep k^{\frac{4}{dm}},
 \end{equation}
 where $C$ is independent of $\varep $ and $k$. 
 Note that the proof of (\ref{3.2-3}) relies on the convergence estimate in $L^2$:
 $\| u_\varep -u_0\|_{L^2(\Omega)} \le C \varep \| f\|_{L^2(\Omega)}$.
 The convergence estimate in $H_0^1$ in Theorem \ref{H-1-theorem}
 allows us to improve the estimate (\ref{3.2-3}) by a factor of $k^{1/(dm)}$.

\begin{proof}[\bf Proof of Theorem \ref{theorem-A}]
We will use Lemma \ref{lemma-3.1} and Theorem \ref{H-1-theorem} to show that
\begin{equation}\label{3.3-1}
|\mu_{\varep, k}-\mu_{0, k}|\le C \, \varep\,  (\mu_{0, k})^{1/2},
\end{equation}
where $C$ is independent of $\varep $ and $k$.
Since $\lambda_{\varep, k}=(\mu_{\varep, k})^{-1}$ for $\varep\ge 0$ and
$\lambda_{\varep, k}\approx \lambda_{0, k}$,
this gives the desired estimate.

Let $u_\varep =T_\varep (f)$ and
$u_0=T_0(f)$, where $\| f\|_{L^2(\Omega)}=1$ and $f\perp V_{0,k-1}$.
In view of (\ref{3.1-1}) for $\varep=0$, we have $<u_0, f>\le \mu_{0, k}$.
Hence,
$$
c\|\nabla u_0\|^2_{L^2(\Omega)} \le <u_0, f> \le \mu_{0, k},
$$
where $c>0$ depends only on the ellipticity constant $\kappa$ of $A$. It follows that
\begin{equation}\label{3.3-3}
\| f\|_{H^{-1}(\Omega)}
\le C \, \|\nabla u_0\|_{L^2(\Omega)} \le C\, (\mu_{0, k})^{1/2}.
\end{equation}
Now, write
$$
<u_\varep-u_0, f>
=\big<u_\varep -u_0 - \big\{ \Phi_{\varep, \ell}^\beta -P_\ell^\beta\big\} \frac{\partial u_0^\beta}{\partial x_\ell}, f\big>
+\big<\big\{ \Phi_{\varep, \ell}^\beta -P_\ell^\beta\big\} \frac{\partial u_0^\beta}{\partial x_\ell}, f\big>.
$$
This implies that for any $f\perp V_{0, k-1}$ with $\| f\|_{L^2(\Omega)}=1$,
\begin{equation}\label{3.3-4}
\aligned
|<u_\varep-u_0,f>|
&\le  \big\|u_\varep-u_0-
\big\{ \Phi_{\varep, \ell}^\beta -P_\ell^\beta\big\} \frac{\partial u_0^\beta}{\partial x_\ell}\big\|_{H^1_0(\Omega)}
\| f\|_{H^{-1}(\Omega)}\\
&\qquad\qquad 
+\big\| \big\{ \Phi_{\varep, \ell}^\beta -P_\ell^\beta\big\} \frac{\partial u_0^\beta}{\partial x_\ell}\big\|_{L^2(\Omega)}
\| f\|_{L^2(\Omega)}\\
& \le C \varep \| f\|_{L^2(\Omega)} \| f\|_{H^{-1}(\Omega)}
+ C\varep \|\nabla u_0\|_{L^2(\Omega)} \| f\|_{L^2(\Omega)}\\
&\le C\varep \| \nabla u_0\|_{L^2(\Omega)}\\
& \le C \varep (\mu_{0,k})^{1/2},
\endaligned
\end{equation}
where we have used Theorem \ref{H-1-theorem} and the estimate 
$\|\Phi_{\varep, \ell}^\beta-P_\ell^\beta\|_\infty \le C\varep$
for  the second inequality, and (\ref{3.3-3}) for the third and fourth.

Next we consider the case $f\perp V_{\varep, k-1}$ and $\| f\|_{L^2(\Omega)}=1$.
In view of (\ref{3.1-1}) we have $<u_\varep, f>\le \mu_{\varep, k}$.
Hence, $c\|\nabla u_\varep\|_{L^2(\Omega)}^2 \le <u_\varep, f>\le \mu_{\varep, k}$.
It follows that
\begin{equation}\label{3.3-5}
\| f\|_{H^{-1}(\Omega)}
\le C \|\nabla u_\varep\|_{L^2(\Omega)} \le C (\mu_{\varep, k})^{1/2}
\end{equation}
and
\begin{equation}\label{3.3-7}
\|\nabla u_0\|_{L^2(\Omega)}
\le C \, \| f\|_{H^{-1}(\Omega)}
\le C \, (\mu_{\varep, k})^{1/2},
\end{equation}
where $C$ depends only on the ellipticity constant of $A$.
As before, this implies that
for any $f\perp V_{\varep, k-1}$ with $\|f\|_{L^2(\Omega)}=1$,
\begin{equation}\label{3.3-6}
\aligned
|<u_\varep-u_0,f>|
&\le  \big\|u_\varep-u_0-
\big\{ \Phi_{\varep, \ell}^\beta -P_\ell^\beta\big\} \frac{\partial u_0^\beta}{\partial x_\ell}\big\|_{H^1_0(\Omega)}
\| f\|_{H^{-1}(\Omega)}\\
&\qquad\qquad 
+\big\| \big\{ \Phi_{\varep, \ell}^\beta -P_\ell^\beta\big\} \frac{\partial u_0^\beta}{\partial x_\ell}\big\|_{L^2(\Omega)}
\| f\|_{L^2(\Omega)}\\
& \le C \varep \| f\|_{H^{-1}(\Omega)}+
C \varep \|\nabla u_0\|_{L^2(\Omega)}\\
&\le C \varep (\mu_{\varep, k})^{1/2}\\
&\le C \varep (\mu_{0, k})^{1/2},
\endaligned
\end{equation}
where we have used the fact $\mu_{\varep, k}\approx \mu_{0, k}$.
In view of Lemma \ref{lemma-3.1}, the estimate (\ref{3.3-1}) follows from (\ref{3.3-4}) and (\ref{3.3-6}).
\end{proof}


\section{Conormal derivatives of Dirichlet eigenfunctions}

Throughout this section we assume that $A$ satisfies conditions (\ref{ellipticity})-(\ref{periodicity})
and $A^*=A$.
Let $\lambda\ge 1$ and $S_{\varep, \lambda} (f)$ be defined by (\ref{definition-of-S}).
Note that
\begin{equation}\label{4.1.1}
\mathcal{L}_\varep \big( S_{\varep, \lambda} (f)\big)
=\lambda S_{\varep, \lambda} (f)+ R_{\varep, \lambda} (f),
\end{equation}
where
\begin{equation}\label{4.1.3}
R_{\varep, \lambda} (f)(x)
=\sum_{\sqrt{\lambda_{\varep, k}}\in 
\big[\sqrt{\lambda}, \sqrt{\lambda} +1 \big)}
(\lambda_{\varep, k}-\lambda) \phi_{\varep, k} (f).
\end{equation}
Clearly, $\|S_{\varep, \lambda} (f)\|_{L^2(\Omega)}\le \| f\|_{L^2(\Omega)}$.
It is also not hard to see that
\begin{equation}\label{4.1.5}
\aligned
\|\nabla S_{\varep, \lambda} (f)\|_{L^2(\Omega)} & \le C \sqrt{\lambda}\, \| f\|_{L^2(\Omega)},\\
\|R_{\varep, \lambda} (f)\|_{L^2(\Omega)}
 & \le C  \sqrt{\lambda}\,  \| f\|_{L^2(\Omega)},\\
 \|\nabla R_{\varep, \lambda} (f)\|_{L^2(\Omega)}
&  \le C  {\lambda} \,  \| f\|_{L^2(\Omega)},
 \endaligned
 \end{equation}
 where $C$ depends only on the ellipticity constant $\kappa$ of $A$.
 
 \begin{lemma}\label{Rellich-lemma}
 Suppose that $A$ satisfies (\ref{ellipticity})-(\ref{periodicity}) and $A^*=A$.
 Also assume that $A$ is Lipschitz continuous.
 Let $u_\varep\in H^2(\Omega;\br^m)$ be a solution of 
 $\mathcal{L}_\varep (u_\varep)=f$ in $\Omega$ for some
 $f\in L^2(\Omega;\br^m)$, where $\Omega$ is a bounded Lipschitz
 domain. Then
 \begin{equation}\label{Rellich-identity}
 \aligned
 \int_{\partial\Omega}
 n_kh_k \, a_{ij}^{\alpha\beta}(x/\varep) \frac{\partial u_\varep^\alpha}{\partial x_i}
 \cdot \frac{\partial u_\varep^\beta}{\partial x_j}\, d\sigma
=2 & \int_{\partial\Omega}   h_k
\left\{n_k \frac{\partial }{\partial x_i} -n_i \frac{\partial}{\partial x_k}\right\} u_\varep^\alpha
\cdot a_{ij}^{\alpha\beta} (x/\varep)\, 
\frac{\partial u_\varep ^\beta}{\partial x_j}\, d\sigma\\
&-\int_\Omega \text{\rm div} (h) \, 
a_{ij}^{\alpha\beta}(x/\varep) \frac{\partial u_\varep^\alpha}{\partial x_i}
 \cdot \frac{\partial u_\varep ^\beta}{\partial x_j}\, dx\\
&  -\int_\Omega
 h_k \frac{\partial} {\partial x_k} \big\{ a_{ij}^{\alpha\beta} (x/\varep)\big\} 
 \frac{\partial u_\varep ^\alpha}{\partial x_i}
 \cdot \frac{\partial u_\varep^\beta}{\partial x_j}\, dx\\
&+2\int_\Omega \frac{\partial h_k}{\partial x_i} \cdot a_{ij}^{\alpha\beta}(x/\varep)
\frac{\partial u_\varep^\alpha}{\partial x_k}
 \cdot \frac{\partial u_\varep ^\beta}{\partial x_j}\, dx\\
&
-2 \int_\Omega f^\alpha \cdot \frac{\partial u_\varep^\alpha}{\partial x_k} \cdot h_k \, dx,
\endaligned
\end{equation}
where $h=(h_1, \dots, h_m)\in C_0^1(\br^d; \br^d)$ and $n$ denotes the unit outward normal to $\partial\Omega$.
\end{lemma}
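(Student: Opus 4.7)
The plan is the classical Rellich--Pohozaev computation adapted to the variable-coefficient operator $\mathcal{L}_\varep$. The starting point is the divergence theorem applied to the vector field whose $k$-th component is $h_k \, a_{ij}^{\alpha\beta}(x/\varep) \, \partial_i u_\varep^\alpha \, \partial_j u_\varep^\beta$; this immediately identifies the boundary integral on the left of (\ref{Rellich-identity}) with an interior integral of the divergence. Expanding that interior integrand by the product rule produces three pieces: one with $\text{div}(h)$, one with $h_k \partial_k a_{ij}^{\alpha\beta}(x/\varep)$, and a second-derivative piece in which the $\partial/\partial x_k$ falls on a gradient of $u_\varep$. The symmetry $A^*=A$, i.e.\ $a_{ij}^{\alpha\beta} = a_{ji}^{\beta\alpha}$, lets me relabel $(i,\alpha) \leftrightarrow (j,\beta)$ and collapse the two symmetric halves of this last piece into a single term $2 h_k a_{ij}^{\alpha\beta}(x/\varep) \, \partial_k\partial_i u_\varep^\alpha \cdot \partial_j u_\varep^\beta$.

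Next I integrate this second-derivative piece by parts in $x_i$, transferring $\partial_i$ off of $u_\varep^\alpha$. This produces a new boundary integral $2\int_{\partial\Omega} n_i h_k a_{ij}^{\alpha\beta} \, \partial_k u_\varep^\alpha \, \partial_j u_\varep^\beta \, d\sigma$, a bulk term carrying $\partial_i h_k$, and a bulk term containing the factor $\partial_i(a_{ij}^{\alpha\beta}(x/\varep) \partial_j u_\varep^\beta) = -f^\alpha$, where this last identity is just $\mathcal{L}_\varep u_\varep = f$ read pointwise almost everywhere (legitimate because $u_\varep \in H^2$, $A$ is Lipschitz, and $h \in C_0^1$). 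The bulk term is thereby converted into $2\int_\Omega h_k f^\alpha \, \partial_k u_\varep^\alpha \, dx$, matching the last term on the right of (\ref{Rellich-identity}).

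To reach the exact form stated in the lemma, I rewrite the newly created boundary integral by the pointwise algebraic identity $2 n_i \partial_k u_\varep^\alpha = 2 n_k \partial_i u_\varep^\alpha - 2 \{n_k \partial_i - n_i \partial_k\} u_\varep^\alpha$. Contracting against $a_{ij}^{\alpha\beta} \partial_j u_\varep^\beta$ and integrating over $\partial\Omega$, the first piece becomes twice the left-hand side of (\ref{Rellich-identity}), while the second is exactly the tangential boundary term in the lemma. Substituting back and cancelling one copy of the left-hand side across the equality delivers the stated identity with the correct prefactor on the tangential boundary term and the correct signs on the four bulk terms.

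The only real difficulty is bookkeeping: the two integrations by parts, one substitution from the PDE, and one application of the symmetry $A^*=A$ must combine to leave precisely the coefficients $(-1,-1,+2,-2)$ on the four bulk integrals, and a single stray sign would break the identity. No approximation argument is required because the hypothesis $u_\varep \in H^2(\Omega;\br^m)$ together with $h \in C_0^1(\br^d;\br^d)$ makes every integration by parts rigorous as written.
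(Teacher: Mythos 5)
Your proposal is correct and takes essentially the same approach as the paper, which simply says to use the divergence theorem together with $A^*=A$ and refers to the constant-coefficient case in the literature; you have supplied the explicit Rellich--Pohozaev computation. One small expository wrinkle: at the intermediate stage you describe the $f$-term as ``$+2\int_\Omega h_k f^\alpha \partial_k u_\varep^\alpha\, dx$, matching the last term on the right,'' but that last term in (\ref{Rellich-identity}) carries a minus sign, and it is only after moving $2\times(\text{LHS})$ across the equality and dividing by $-1$ that all four bulk terms (and the tangential boundary term) acquire their final signs; since your closing sentence makes clear you perform that cancellation, the argument is sound.
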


\begin{proof}
Use the divergence theorem and the assumption that $A^*=A$.
We refer the reader to \cite{fabes2} for the
case of constant coefficients.
\end{proof}

\begin{lemma}\label{lemma-4.2}
Assume that $A$ and $\Omega$ satisfy the same assumptions as in Lemma \ref{Rellich-lemma}.
Let $u_\varep =S_{\varep, \lambda} (f)$ be defined by (\ref{definition-of-S}),
 where $f\in L^2(\Omega; \br^m)$ and
$\| f\|_{L^2(\Omega)} =1$.
Suppose that $u_\varep \in H^2(\Omega;\br^m)$.
Then
\begin{equation}\label{estimate-4.2}
\int_{\partial\Omega}
|\nabla u_\varep|^2\, d\sigma
\le C\, {\lambda} 
+\frac{C}{\varep} \int_{\Omega_\varep}
|\nabla u_\varep|^2\, dx,
\end{equation}
where $\Omega_\varep =\big\{ x\in \Omega:\, \text{\rm dist} (x, \partial\Omega)< \varep\big\}$
and $C$ depends only on $A$ and $\Omega$.
\end{lemma}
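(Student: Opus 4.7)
The plan is to apply the Rellich identity \eqref{Rellich-identity} to $u_\varep=S_{\varep,\lambda}(f)$ with the source on the right-hand side supplied by \eqref{4.1.1}, namely $\mathcal{L}_\varep u_\varep=\lambda u_\varep+R_{\varep,\lambda}(f)$. To localize the interior error terms to $\Omega_\varep$, I would take $h=\eta N$, where $N\in C^1(\br^d;\br^d)$ is a bounded smooth extension of the outward unit normal $n$ from $\partial\Omega$ to all of $\br^d$ (available since $\Omega$ is $C^{1,1}$), satisfying $|N|+|\nabla N|\le C$, and $\eta\in C_c^\infty(\br^d)$ is a cutoff with $\eta\equiv 1$ on the $(\varep/2)$-neighborhood of $\partial\Omega$, $\eta\equiv 0$ outside the $\varep$-neighborhood, and $|\nabla\eta|\le C/\varep$. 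Then $h\cdot n=1$ on $\partial\Omega$, $\mathrm{supp}(h)\cap\Omega\subset\Omega_\varep$, $|h|\le C$, and $|\nabla h|\le C/\varep$.

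Because $u_\varep\in H_0^1(\Omega;\br^m)$, the tangential derivatives $(n_k\partial_i-n_i\partial_k)u_\varep$ vanish on $\partial\Omega$, so the first boundary integral on the right of \eqref{Rellich-identity} drops out. Moreover $\nabla u_\varep=(\partial u_\varep/\partial n)\,n$ on $\partial\Omega$, so by the pointwise ellipticity of $A$ the left-hand side dominates $\kappa\int_{\partial\Omega}|\nabla u_\varep|^2\,d\sigma$. The three interior integrals of \eqref{Rellich-identity} involving $\mathrm{div}(h)$, $h_k\partial_k\{a(x/\varep)\}$, and $\partial_i h_k$ are each majorized by $(C/\varep)\int_{\Omega_\varep}|\nabla u_\varep|^2\,dx$, using the bounds $|\nabla h|\le C/\varep$, $|\partial_k\{a(x/\varep)\}|\le C/\varep$ (this is where the Lipschitz hypothesis on $A$ enters), and the support property of $h$. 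The $R_{\varep,\lambda}(f)$-part of the source term is controlled by Cauchy--Schwarz with the estimates $\|R_{\varep,\lambda}(f)\|_{L^2}\le C\sqrt{\lambda}$ and $\|\nabla u_\varep\|_{L^2}\le C\sqrt{\lambda}$ from \eqref{4.1.5}, yielding a contribution of order $C\lambda$.

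The hard part will be controlling the $\lambda u_\varep$-part of the source term. Using $2u_\varep^\alpha h_k\partial_k u_\varep^\alpha=h_k\partial_k|u_\varep|^2$ and integrating by parts (the boundary term vanishes as $u_\varep=0$ on $\partial\Omega$) gives
\[
-2\int_\Omega \lambda u_\varep^\alpha h_k\partial_k u_\varep^\alpha\,dx
=\lambda\int_\Omega \mathrm{div}(h)\,|u_\varep|^2\,dx.
\]
The naive bound via $|\mathrm{div}(h)|\le C/\varep$ and $\|u_\varep\|_{L^2}^2\le 1$ would produce $C\lambda/\varep$, which is incompatible with the claimed form. To repair this, invoke the local Hardy-type inequality on the boundary strip,
\[
\int_{\Omega_\varep}|u_\varep|^2\,dx\le C\varep^2\int_{\Omega_\varep}|\nabla u_\varep|^2\,dx,
\]
which follows from the representation $u_\varep(x)=\int_0^{d(x)}\partial_n u_\varep(P(x)+sn)\,ds$ along the inward normal ray from the boundary projection $P(x)$, combined with Cauchy--Schwarz and integration over $\partial\Omega$. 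This yields $|\lambda\int\mathrm{div}(h)|u_\varep|^2\,dx|\le C\varep\lambda\int_{\Omega_\varep}|\nabla u_\varep|^2\,dx$, which is absorbed directly into $(C/\varep)\int_{\Omega_\varep}|\nabla u_\varep|^2\,dx$ in the regime $\varep^2\lambda\lesssim 1$. In the complementary regime $\varep^2\lambda\gtrsim 1$, the crude alternative $\int_{\Omega_\varep}|u_\varep|^2\,dx\le\|u_\varep\|_{L^2(\Omega)}^2\le 1$ together with $\int_{\Omega_\varep}|\nabla u_\varep|^2\,dx\le\int_\Omega|\nabla u_\varep|^2\,dx\le C\lambda$ keeps the total error within the asserted form $C\lambda+(C/\varep)\int_{\Omega_\varep}|\nabla u_\varep|^2\,dx$. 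Collecting all contributions and dividing by $\kappa$ yields \eqref{estimate-4.2}; the delicate interplay between the $1/\varep$ divergence of the cutoff $h$ and the $\lambda$-factor carried by the source is the only subtle point.
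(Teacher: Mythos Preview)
Your overall strategy coincides with the paper's: apply the Rellich identity with a vector field $h$ supported in $\Omega_\varep$, use $u_\varep=0$ on $\partial\Omega$ to kill the tangential boundary term, and bound the three interior ``$\nabla h$ / $\partial_k a(x/\varep)$'' integrals by $\frac{C}{\varep}\int_{\Omega_\varep}|\nabla u_\varep|^2$; the $R_{\varep,\lambda}$ contribution is $\le C\lambda$ by \eqref{4.1.5}. The paper does exactly this (see \eqref{4.2-1}).

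The gap is in your treatment of the source term $\lambda u_\varep$ when $\varep^2\lambda>1$. After integrating by parts you have $\frac{C\lambda}{\varep}\int_{\Omega_{c\varep}}|u_\varep|^2$, and Hardy gives only
\[
\frac{C\lambda}{\varep}\int_{\Omega_{c\varep}}|u_\varep|^2
\le C\lambda\varep\int_{\Omega_{c\varep}}|\nabla u_\varep|^2,
\]
which is absorbed into $\frac{C}{\varep}\int_{\Omega_\varep}|\nabla u_\varep|^2$ \emph{only} if $\varep^2\lambda\le C$. Your ``crude alternative'' in the regime $\varep^2\lambda\gtrsim 1$ produces $\frac{C\lambda}{\varep}$, but this cannot be bounded by $C\lambda+\frac{C}{\varep}\int_{\Omega_\varep}|\nabla u_\varep|^2$: you would need a \emph{lower} bound $\int_{\Omega_\varep}|\nabla u_\varep|^2\ge c\lambda$, and no such bound is available (indeed $\|u_\varep\|_{L^2}$ may be arbitrarily small). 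The mention of $\int_{\Omega_\varep}|\nabla u_\varep|^2\le C\lambda$ is an upper bound and does not help.

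The paper closes this gap by using the equation itself, not just the boundary condition. From the pointwise identity
\[
\lambda|u_\varep|^2 - a_{ij}^{\alpha\beta}(x/\varep)\,\partial_i u_\varep^\alpha\,\partial_j u_\varep^\beta
= -\big(R_{\varep,\lambda}(f)\big)^\alpha u_\varep^\alpha
- \partial_i\!\left\{u_\varep^\alpha\, a_{ij}^{\alpha\beta}(x/\varep)\,\partial_j u_\varep^\beta\right\},
\]
one multiplies by $\varphi^2$ (a cutoff supported in $\Omega_{2c\varep}$, equal to $1$ on $\Omega_{c\varep}$, $|\nabla\varphi|\le C/\varep$), integrates, and uses Hardy only on the cross term to obtain
\[
\lambda\int_{\Omega_{c\varep}}|u_\varep|^2\,dx
\le C\int_{\Omega_{2c\varep}}|\nabla u_\varep|^2\,dx + C\varep\lambda,
\]
uniformly in $\varep$ and $\lambda$. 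Dividing by $\varep$ gives exactly the form \eqref{estimate-4.2} with no case split. The point is that the eigenvalue relation trades $\lambda|u_\varep|^2$ for $|\nabla u_\varep|^2$ directly (an ``equipartition'' step), which Hardy alone cannot do once $\varep^2\lambda>1$.
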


\begin{proof}
We first consider the case $0<\varep<\text{diam}(\Omega)$.
In this case we may choose a vector field $h$ in $C_0^1(\br^d; \br^d)$ such that
$n_kh_k\ge c>0$ on $\partial\Omega$, $|h|\le 1$,
 $|\nabla h|\le C \varep^{-1}$,  and
$h=0$ on $\{ x\in \Omega: \text{dist}(x, \partial\Omega)\ge c\varep\}$,
where $c=c(\Omega)>0$ is small.
Note that $\mathcal{L}_\varep (u_\varep)=\lambda u_\varep +R_{\varep, \lambda} (f)$ in $\Omega$.
Since $u_\varep =0$ on $\partial\Omega$, it follows from (\ref{Rellich-identity}) that
\begin{equation}\label{4.2-1}
\aligned
c\int_{\partial\Omega}|\nabla u_\varep|^2\, d\sigma
&\le \frac{C}{\varep} \int_{\Omega_\varep} |\nabla u_\varep|^2\, dx
-2\lambda \int_\Omega  u_\varep^\alpha \cdot 
\frac{\partial u_\varep^\alpha}{\partial x_k}
 \cdot  h_k \, dx\\
& \qquad \qquad
-2 \int_\Omega \big(R_{\varep, \lambda} (f)\big)^\alpha \cdot
\frac{\partial u_\varep^\alpha}{\partial x_k}\cdot h_k \, dx.
\endaligned
\end{equation}
Using the Cauchy inequality we may bound
 the third integral in the right hand side of (\ref{4.2-1})
by $C\| R_{\varep, \lambda} (f)\|_{L^2(\Omega)} \|\nabla u_\varep\|_{L^2(\Omega)}$,
which, in view of (\ref{4.1.5}),
 is dominated by $C\lambda$.

To handle the second integral in the right hand side of
(\ref{4.2-1}), we use the integration by parts to obtain
\begin{equation}\label{4.2-3}
\big|2\lambda
 \int_\Omega  u_\varep^\alpha \cdot 
\frac{\partial u_\varep^\alpha}{\partial x_k}
 \cdot  h_k \, dx\big|
   = \big| \lambda \int_\Omega |u_\varep|^2 \, \text{div}(h)\, dx\big|\\
 \le \frac{C\lambda}{\varep} \int_{\Omega_{c\varep}}
|u_\varep|^2\, dx.
\end{equation}
Since
\begin{equation}\label{4.2-5}
\lambda |u_\varep|^2 -a_{ij}^{\alpha\beta} (x/\varep) \frac{\partial u_\varep^\alpha}{\partial x_i}
\cdot \frac{\partial u_\varep^\beta}{\partial x_j}
=\big( \lambda u_\varep -\mathcal{L}_\varep (u_\varep)\big)^\alpha u_\varep^\alpha
-\frac{\partial}{\partial x_i}
\left\{ u_\varep^\alpha \, a_{ij}^{\alpha\beta}(x/\varep)
\frac{\partial u_\varep^\beta}{\partial x_j}\right\},
\end{equation}
it follows that for any $\varphi\in C^1_0(\br^d)$,
\begin{equation}\label{4.2-7}
\aligned
& \int_\Omega
\left\{
 \lambda |u_\varep|^2 -a_{ij}^{\alpha\beta} (x/\varep) \frac{\partial u_\varep^\alpha}{\partial x_i}
\cdot \frac{\partial u_\varep^\beta}{\partial x_j}
\right\} \varphi^2\, dx\\
&
=\int_\Omega 
\big( \lambda u_\varep -\mathcal{L}_\varep (u_\varep)\big)^\alpha u_\varep^\alpha
\varphi^2\, dx
+2\int_\Omega
u^\alpha_\varep
a_{ij}^{\alpha\beta} (x/\varep) \frac{\partial u_\varep^\beta}{\partial x_j} \cdot
\frac{\partial\varphi}{\partial x_i}\, \varphi\ dx.
\endaligned
\end{equation}
Choose $\varphi$ so that $0\le \varphi\le 1$, $\varphi (x)=1$ if dist$(x, \partial\Omega)\le c\varep$,
$\varphi(x)=0$ if dist$(x, \partial\Omega)\ge 2c\varep$,
and $|\nabla\varphi|\le C \varep^{-1}$.
In view of (\ref{4.2-7}) we have
$$
\aligned
\lambda
\int_\Omega |u_\varep|^2 \varphi^2\, dx
&\le C \int_\Omega |\nabla u_\varep|^2 \varphi^2\, dx
+\int_\Omega |R_{\varep, \lambda} (f)|\, |u_\varep |\, \varphi^2\, dx
+C \int_\Omega |u_\varep |^2\, |\nabla \varphi |^2\, dx\\
& 
\le C \int_\Omega |\nabla u_\varep|^2 \varphi^2\, dx
+\|R_{\varep, \lambda} (f)\|_{L^2(\Omega)}  \|u_\varep\|_{L^2(\Omega_{2c\varep})}
+\frac{C}{\varep^2} \int_{\Omega_{2c\varep}} |u_\varep|^2\, dx\\
&\le  C\int_{\Omega_{2c\varep}} |\nabla u_\varep|^2\, dx
+C \varep  {\lambda},
\endaligned
$$
where we have used the Cauchy inequality, (\ref{4.1.5}), and the inequality
\begin{equation}\label{4.2-9}
\int_{\Omega_{2c\varep}} |u_\varep|^2\, dx \le C\varep^2 \int_{\Omega_{2c\varep}}
|\nabla u_\varep|^2\, dx.
\end{equation}
This, together with (\ref{4.2-1}) and (\ref{4.2-3}), gives the estimate (\ref{estimate-4.2}).

Finally, if $\varep\ge \text{diam}(\Omega)$, we choose a vector field $h\in C_0^1(\br^d; \br^d)$ so that
$h_kn_k\ge c>0$ on $\partial\Omega$.
The same argument as in (\ref{4.2-1}) and (\ref{4.2-3})
 shows that the left hand side of (\ref{estimate-4.2}) is bounded by $C\lambda $.
\end{proof}

\begin{thm}\label{theorem-4.1}
Suppose that $A$ satisfies conditions (\ref{ellipticity})-(\ref{periodicity}),
and $A^*=A$.
Also assume that $A$ is Lipschitz continuous.
Let $\Omega$ be a bounded $C^{1,1}$ domain.
Let $u_\varep =S_{\varep, \lambda} (f)$ be defined by (\ref{definition-of-S}),
 where $f\in L^2(\Omega, \br^m)$ and
$\| f\|_{L^2(\Omega)} =1$.
Then
\begin{equation}\label{estimate-4.1}
\int_{\partial\Omega}
|\nabla u_\varep|^2\, d\sigma
\le 
\left\{
\begin{array}{ll}
 C\lambda (1+\varep^{-1}) \  & \text{ if }\  \varep^2 \lambda \ge 1,\\
 C \lambda ( 1+ \varep \lambda ) \  &\text{ if } \ \varep^2\lambda<1,
\end{array}
\right.
\end{equation}
where $C$ depends only on $A$ and $\Omega$.
\end{thm}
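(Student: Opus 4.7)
My plan is to invoke Lemma~\ref{lemma-4.2}, which already reduces the desired estimate to the bulk integral $\int_{\Omega_\varep}|\nabla u_\varep|^2\,dx$, and then to treat the two regimes separately. Setting $F := \mathcal{L}_\varep u_\varep = \lambda u_\varep + R_{\varep,\lambda}(f)$, the bounds $\|u_\varep\|_{L^2(\Omega)}\le 1$ and (\ref{4.1.5}) give $\|F\|_{L^2(\Omega)}\le C\lambda$, while testing the equation against $u_\varep$ yields the global energy estimate $\|\nabla u_\varep\|_{L^2(\Omega)}^2\le C\lambda$.

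In the easy regime $\varep^2\lambda\ge 1$, the crude bound $\int_{\Omega_\varep}|\nabla u_\varep|^2\,dx\le \|\nabla u_\varep\|_{L^2(\Omega)}^2\le C\lambda$ substituted into (\ref{estimate-4.2}) immediately produces the first branch $C\lambda(1+\varep^{-1})$.

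The nontrivial regime is $\varep^2\lambda<1$. My plan here is to use Theorem~\ref{H-1-theorem} to replace $u_\varep$, in the boundary collar $\Omega_\varep$, by the solution $v_\varep\in H_0^1(\Omega;\br^m)$ of the homogenized problem $\mathcal{L}_0 v_\varep = F$. Since $\Omega$ is $C^{1,1}$ and $\mathcal{L}_0$ has constant coefficients, standard elliptic regularity gives $\|v_\varep\|_{H^2(\Omega)}\le C\|F\|_{L^2}\le C\lambda$. Theorem~\ref{H-1-theorem} applied to the pair $(u_\varep,v_\varep)$ then yields the decomposition $u_\varep = v_\varep + \{\Phi_{\varep,j}^\beta - P_j^\beta\}\partial_j v_\varep^\beta + \rho_\varep$ with $\|\rho_\varep\|_{H_0^1(\Omega)}\le C\varep\lambda$. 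Using $\|\Phi_\varep - P\|_{L^\infty(\Omega)}\le C\varep$ from Lemma~\ref{lemma-2.1} together with the Avellaneda--Lin uniform Lipschitz estimate $\|\nabla\Phi_\varep\|_{L^\infty(\Omega)}\le C$ (available here because $A$ is Lipschitz and $\Omega$ is $C^{1,1}$), differentiating the decomposition gives
\[
\int_{\Omega_\varep}|\nabla u_\varep|^2\,dx \le C\int_{\Omega_\varep}|\nabla v_\varep|^2\,dx + C\varep^2\|\nabla^2 v_\varep\|_{L^2(\Omega)}^2 + C\|\rho_\varep\|_{H_0^1}^2 \le C\int_{\Omega_\varep}|\nabla v_\varep|^2\,dx + C\varep^2\lambda^2,
\]
so via (\ref{estimate-4.2}) it suffices to establish the refined boundary-collar bound $\int_{\Omega_\varep}|\nabla v_\varep|^2\,dx \le C\varep\lambda + C\varep^2\lambda^2$.

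I expect this refined bound to be the main technical obstacle. The plan for it is to combine the trace / boundary-layer inequality
\[
\int_{\Omega_\varep}|\nabla v_\varep|^2\,dx \le C\varep\int_{\partial\Omega}|\nabla v_\varep|^2\,d\sigma + C\varep^2\|\nabla^2 v_\varep\|_{L^2(\Omega)}^2,
\]
(derived from the fundamental theorem of calculus in the normal direction and the $H^2$-regularity of $v_\varep$) with a Rellich estimate for the constant-coefficient solution $v_\varep$ of the form $\int_{\partial\Omega}|\nabla v_\varep|^2\,d\sigma\le C\lambda + C\varep\lambda^2$. My approach for the latter Rellich bound is to treat the $-2\int F\,(\nabla v_\varep\cdot h)\,dx$ term by substituting the approximate-eigenfunction decomposition $F = \lambda v_\varep + \lambda(u_\varep - v_\varep) + R_{\varep,\lambda}(f)$: the principal contribution $-2\lambda\int v_\varep(\nabla v_\varep\cdot h) = \lambda\int v_\varep^2\,\operatorname{div}(h)$, whose boundary term vanishes because $v_\varep|_{\partial\Omega}=0$, is harmless, while the remaining contributions are estimated by Cauchy--Schwarz using $\|\{\Phi_\varep - P\}\nabla v_\varep\|_{L^2}\le C\varep\sqrt\lambda$, $\|\rho_\varep\|_{L^2}\le C\varep\lambda$, and $\|R_{\varep,\lambda}(f)\|_{L^2}\le C\sqrt\lambda$. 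Substituting the resulting estimate back into (\ref{estimate-4.2}) then delivers the second branch $C\lambda(1+\varep\lambda)$.
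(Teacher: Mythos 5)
Your reduction (invoking Lemma~\ref{lemma-4.2}, disposing of the case $\varepsilon^2\lambda\ge 1$ via $\|\nabla u_\varepsilon\|_{L^2}^2\le C\lambda$, introducing the homogenized solution $v_\varepsilon$, using the uniform Lipschitz bound on $\Phi_\varepsilon$ together with Theorem~\ref{H-1-theorem} to pass from $\nabla u_\varepsilon$ to $\nabla v_\varepsilon$ in the boundary collar, and then using the one-dimensional trace inequality to reduce to $\int_{\partial\Omega}|\nabla v_\varepsilon|^2\,d\sigma\le C\lambda+C\varepsilon\lambda^2$) matches the paper's strategy exactly. The problem is in how you estimate that final boundary integral for $v_\varepsilon$; your plan there has two genuine gaps that the paper circumvents with a more careful argument.

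First, the term you call ``harmless,'' namely $\lambda\int_\Omega v_\varepsilon^2\,\operatorname{div}(h)\,dx$, is not harmless. The best available bound on $\|v_\varepsilon\|_{L^2(\Omega)}$ is $1+C\varepsilon\lambda$ (from $v_\varepsilon=u_\varepsilon+(v_\varepsilon-u_\varepsilon)$, $\|u_\varepsilon\|_{L^2}\le 1$, and the $O(\varepsilon)$ $L^2$ convergence estimate), so $\lambda\|v_\varepsilon\|_{L^2}^2$ can be of order $\lambda(1+\varepsilon\lambda)^2\sim\varepsilon^2\lambda^3$. In the subregime $\varepsilon^{-1}\ll\lambda<\varepsilon^{-2}$ (so $\varepsilon\lambda\gg 1$ yet $\varepsilon^2\lambda<1$) this is larger than the target $C\lambda(1+\varepsilon\lambda)\sim C\varepsilon\lambda^2$ by a factor of $\varepsilon\lambda$. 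Second, your Cauchy--Schwarz treatment of the remainder $\rho_\varepsilon$ gives $\lambda\|\rho_\varepsilon\|_{L^2}\|\nabla v_\varepsilon\|_{L^2}\le C\lambda\cdot(\varepsilon\lambda)\cdot\sqrt\lambda=C\varepsilon\lambda^{5/2}$, again overshooting the budget of $C\varepsilon\lambda^2$ by $\sqrt\lambda$. The root cause of both overshoots is the same: you decompose $F=\lambda u_\varepsilon+R_{\varepsilon,\lambda}(f)$ further as $\lambda v_\varepsilon+\lambda(u_\varepsilon-v_\varepsilon)+R_{\varepsilon,\lambda}(f)$ and keep the derivative on $v_\varepsilon$, and then control everything through $L^2$ norms of $v_\varepsilon$ and $\rho_\varepsilon$, which are too large.

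The paper avoids this by integrating by parts to transfer the derivative from $v_\varepsilon$ onto $u_\varepsilon$ in the problematic term $\lambda\int u_\varepsilon\,\partial_k v_\varepsilon\,h_k$, and then exploiting that $u_\varepsilon$ (unlike $v_\varepsilon$) is a genuine spectral cluster, which gives two extra decaying $H^{-1}$ controls unavailable for $v_\varepsilon$: $\|u_\varepsilon\|_{H^{-1}(\Omega)}\le C\lambda^{-1/2}$ (from $\lambda u_\varepsilon=\mathcal{L}_\varepsilon u_\varepsilon-R_{\varepsilon,\lambda}(f)$) and $\|(\nabla u_\varepsilon)h\|_{H^{-1}(\Omega)}\le C$ (from $\|u_\varepsilon\|_{L^2}\le 1$). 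Pairing the $\operatorname{div}(h)$ term against $v_\varepsilon\operatorname{div}(h)\in H_0^1$ with the first bound yields $C\lambda$, and pairing the corrector remainder (whose $H_0^1$ norm is $C\varepsilon\lambda$ by Theorem~\ref{H-1-theorem}) against $(\nabla u_\varepsilon)h$ in the $H^{-1}$/$H_0^1$ duality with the second bound yields $C\varepsilon\lambda$, exactly closing the gap of $\lambda^{1/2}$ that your $L^2$--$L^2$ Cauchy--Schwarz estimates leave. If you want to keep your decomposition, you would need some replacement for these $H^{-1}$ gains; without them, your argument only establishes the weaker bound $\int_{\partial\Omega}|\nabla u_\varepsilon|^2\,d\sigma\le C\lambda(1+\varepsilon\lambda)^2$ in the regime $\varepsilon^2\lambda<1$, which in particular does not yield the $C\lambda^{3/2}$ estimate of Remark~\ref{remark-1.3}.
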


\begin{proof}
We first note that under the conditions on $A$ and $\Omega$ in the theorem, 
$u_\varep\in H^2(\Omega; \br^m)$.
This allows us to use Lemma \ref{lemma-4.2} and reduce the problem
to the estimate of $\varep^{-1} \|\nabla u_\varep\|^2_{L^2(\Omega_\varep)}$
by the right hand side of (\ref{estimate-4.1}).
If $\varep^2 \lambda \ge 1$, the desired estimate follows directly from 
$\|\nabla u_\varep\|^2_{L^2(\Omega)} \le C \lambda$.

The proof for the case $\varep^2\lambda<1$ is more subtle and uses
the $H^1$ convergence estimate in Theorem \ref{H-1-theorem}.
Let $v_\varep$ be the unique solution in $ H_0^1(\Omega; \br^m)$ to the system,
\begin{equation}\label{4.5-1}
\mathcal{L}_0(v_\varep)=\lambda u_\varep +R_{\varep, \lambda} (f) \quad \text{ in } \Omega.
\end{equation}
Observe that
\begin{equation}\label{4.5-2}
\|\lambda u_\varep +R_{\varep, \lambda} (f)\|_{L^2(\Omega)}
\le C \lambda .
\end{equation}
Since $\partial\Omega$ is $C^{1,1}$ and $\mathcal{L}_0$ is a second order elliptic operator with constant coefficients, 
this implies that $v_\varep \in H^2(\Omega;\br^m)$ and
\begin{equation}\label{4.5-4}
\|\nabla^2 v_\varep\|_{L^2(\Omega)} \le C \lambda .
\end{equation}
Also, using $\mathcal{L}_0(v_\varep)=\mathcal{L}_\varep(u_\varep)$ in $\Omega$, we may deduce that
\begin{equation}\label{4.5-3}
\| v_\varep\|_{H^1_0(\Omega)}
\le C \| \nabla u_\varep\|_{L^2(\Omega)}
\le C \sqrt{\lambda},
\end{equation}
where we have used (\ref{4.1.5}).
To estimate $\varep^{-1}\| \nabla u_\varep\|^2_{L^2(\Omega_\varep)}$, we use 
the estimate $\|\nabla\Phi_\varep\|_\infty \le C$  in \cite{AL-1987} to obtain
\begin{equation}\label{4.5-5}
\aligned
\frac{1}{\varep}
\int_{\Omega_\varep}
\big|\frac{\partial u_\varep}{\partial x_i}\big|^2\, dx
& \le 
\frac{C}{\varep}\int_{\Omega_\varep} 
\big|\frac{\partial u_\varep}{\partial x_i}
-\frac{\partial}{\partial x_i}
\left\{ \Phi_{\varep, j}^\beta\right\} \cdot \frac{\partial v_\varep^\beta}{\partial x_j}\big|^2\, dx
+\frac{C}{\varep} \int_{\Omega_\varep} |\nabla v_\varep|^2\, dx\\
&\le C\varep \lambda^2 +\frac{C}{\varep}
\int_{\Omega_\varep} |\nabla v_\varep|^2\, dx,
\endaligned
\end{equation}
where the last inequality follows from
(\ref{estimate-2.5}) and (\ref{4.5-2}).
Furthermore, we may use the Fundamental Theorem of Calculus to obtain
$$
\aligned
\frac{1}{\varep}
\int_{\Omega_\varep} |\nabla v_\varep|^2\, dx
& \le C \int_{\partial\Omega} |\nabla v_\varep|^2\, d\sigma
+C \varep \int_{\Omega_\varep} |\nabla^2 v_\varep|^2\, dx\\
&\le C \int_{\partial\Omega} |\nabla v_\varep|^2\, d\sigma
+C \varep \lambda^2,
\endaligned
$$
where we have used (\ref{4.5-4}) for the second inequality.
As a result it suffices to show that
\begin{equation}\label{4.5-6}
\int_{\partial\Omega} |\nabla v_\varep|^2\, d\sigma
\le C \lambda (1+\varep \lambda).
\end{equation}

To this end we use a Rellich identity for $\mathcal{L}_0$, similar to (\ref{Rellich-identity})
for $\mathcal{L}_\varep$, to deduce that
\begin{equation}\label{4.5-7}
\aligned
\int_{\partial\Omega}
|\nabla v_\varep|^2\, dx
& \le C \int_\Omega |\nabla v_\varep|^2\, dx
+C \big|\int_\Omega
\big\{ \lambda u_\varep +R_{\varep, \lambda} (f)\big\}^\alpha \cdot \frac{\partial v_\varep^\alpha}
{\partial x_k}  \cdot h_k\, dx\big|\\
&\le C \lambda
+C \lambda \big|\int_\Omega
 u_\varep^\alpha \cdot \frac{\partial v_\varep^\alpha}
{\partial x_k}  \cdot h_k\, dx\big|\\
&
\le C \lambda
+C \lambda \big|\int_\Omega
 v_\varep^\alpha \cdot \frac{\partial u_\varep^\alpha}
{\partial x_k}  \cdot h_k\, dx\big|
+\lambda \big|\int_\Omega u_\varep^\alpha \cdot v_\varep^\alpha \cdot \text{div}(h)\, dx\big|, 
\endaligned
\end{equation}
where $h=(h_1, \dots, h_d)\in C_0^1(\br^d; \br^d)$ is a vector field such that
$h_kn_k\ge c>0$ on $\partial\Omega$ and $|h|+|\nabla h|\le C$, and
we have used (\ref{4.5-3}) and (\ref{4.1.5})  for the second inequality and
integration by parts for the third.
To estimate the third integral in the right hand side of (\ref{4.5-7}), we note that
$$
\aligned
\| u_\varep\|_{H^{-1}(\Omega)}
& = \lambda^{-1} \| \mathcal{L}_\varep (u_\varep) -R_{\varep, \lambda} (f)\|_{H^{-1}(\Omega)}\\
&\le  C \lambda^{-1} \|\nabla u_\varep\|_{L^2(\Omega)}
+ C\lambda^{-1} \|R_{\varep, \lambda} (f)\|_{L^2(\Omega)}\\
& \le C\lambda^{-1/2},
\endaligned
$$
where we have used (\ref{4.1.5}).
It follows that
\begin{equation}\label{4.5-9}
\lambda \big|\int_\Omega u_\varep^\alpha \, v_\varep^\alpha \, \text{div}(h)\, dx\big|
\le C \lambda\,  \| u_\varep\|_{H^{-1}(\Omega)}
\| v_\varep \, \text{div} (h)\|_{H^1_0(\Omega)}\\
\le C \lambda.
\end{equation}

Finally, we claim that
\begin{equation}\label{claim}
  \big|\int_\Omega
 v_\varep^\alpha \, \frac{\partial u_\varep^\alpha}
{\partial x_k}  \, h_k\, dx\big|
\le C  (1+\varep\lambda).
\end{equation}
In view of (\ref{4.5-7}) and (\ref{4.5-9}), this would give the estimate (\ref{4.5-6}).
To see (\ref{claim}) we use integration by parts to obtain
$$
\aligned
  \big|\int_\Omega
 v_\varep^\alpha \, \frac{\partial u_\varep^\alpha}
{\partial x_k}  \, h_k\, dx\big|
&\le 
  \big|\int_\Omega
(u_\varep^\alpha - v_\varep^\alpha) \, \frac{\partial u_\varep^\alpha}
{\partial x_k}  \, h_k\, dx\big|
+\frac{1}{2}
\big|\int_\Omega |u_\varep|^2\, \text{div}(h)\, dx \big|\\
&\le 
 \big|\int_\Omega
\left\{ u_\varep^\alpha - v_\varep^\alpha-\left\{ \Phi_{\varep, j}^{\alpha\beta} -x_j\delta^{\alpha\beta} \right\}
\frac{\partial v^\beta_\varep}{\partial x_j} \right\} \, \frac{\partial u_\varep^\alpha}
{\partial x_k}  \, h_k\, dx\big| \\
& \qquad\qquad +
 \big|\int_\Omega
\left\{ \Phi_{\varep, j}^{\alpha\beta} -x_j \delta^{\alpha\beta} \right\}
\frac{\partial v^\beta_\varep}{\partial x_j} \cdot \frac{\partial u_\varep^\alpha}
{\partial x_k}  \, h_k\, dx\big|
+C \\
& \le 
C \, \| (\nabla u_\varep) h\|_{H^{-1}(\Omega)}
\big\| u_\varep - v_\varep-\left\{ \Phi_{\varep, j}^\beta -P_j^\beta\right\}
\frac{\partial v^\beta_\varep}{\partial x_j}  
\big\|_{H_0^1(\Omega)}\\
& \qquad\qquad+C \varep\,  \|\nabla u_\varep\|_{L^2(\Omega)}
\|\nabla v_\varep\|_{L^2(\Omega)}
+C \\
&\le C  +C \varep \lambda,
\endaligned
$$
where we have used Theorem \ref{H-1-theorem} as well as the estimate 
$\|\nabla u_\varep\|_{L^2(\Omega)} +\|\nabla v_\varep \|_{L^2(\Omega)} \le C \lambda^{1/2}$
for the last inequality.
This completes the proof.
\end{proof}

Note that the right hand side of (\ref{estimate-4.1}) is bounded
by $C\lambda^{3/2}$ in both cases.
We give a direct proof of this weaker estimate
under some weaker assumptions.

\begin{thm}\label{theorem-4.7}
Assume that $A$ satisfies (\ref{ellipticity})-(\ref{periodicity}), $A^*=A$,
and $A$ is H\"older continuous.
Let $\Omega$ be a bounded Lipschitz domain.
Let $u_\varep= S_{\varep,\lambda} (f)$ be defined as in (\ref{definition-of-S}).
Then
\begin{equation}\label{estimate-4.7}
\int_{\partial\Omega}
|\nabla u_{\varep} |^2\, d\sigma
\le C \lambda ^{3/2}\int_\Omega |f|^2\, dx, 
\end{equation}
where $C$ depends only on $\Omega$ and $A$.
\end{thm}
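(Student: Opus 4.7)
The plan is to reduce the estimate to the uniform (in $\varep$) $L^2$ Rellich estimates for $\mathcal{L}_\varep$ on bounded Lipschitz domains that were established in \cite{Kenig-Shen-2}. These estimates were proved precisely under the hypotheses invoked here: (\ref{ellipticity})--(\ref{periodicity}), $A^*=A$, $A$ H\"older continuous, and $\Omega$ Lipschitz.

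First I would set up the right-hand side. Let $u_\varep = S_{\varep,\lambda}(f)$ and put $g := \mathcal{L}_\varep u_\varep = \lambda u_\varep + R_{\varep,\lambda}(f)$. Then $u_\varep \in H_0^1(\Omega;\br^m)$ solves the Dirichlet problem $\mathcal{L}_\varep u_\varep = g$ with $u_\varep = 0$ on $\partial\Omega$. From (\ref{4.1.5}) and $\|S_{\varep,\lambda}(f)\|_{L^2(\Omega)}\le \|f\|_{L^2(\Omega)}$, together with $\lambda\ge 1$, I record
\begin{equation*}
\|u_\varep\|_{L^2(\Omega)} \le \|f\|_{L^2(\Omega)}, \quad
\|\nabla u_\varep\|_{L^2(\Omega)} \le C\sqrt{\lambda}\,\|f\|_{L^2(\Omega)}, \quad
\|g\|_{L^2(\Omega)}\le C\lambda\, \|f\|_{L^2(\Omega)}.
\end{equation*}

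Next, I would invoke the Rellich estimate from \cite{Kenig-Shen-2} in the following form: for any $u\in H_0^1(\Omega;\br^m)$ with $\mathcal{L}_\varep u \in L^2(\Omega;\br^m)$ on a bounded Lipschitz domain, one has the uniform (in $\varep$) bound
\begin{equation*}
\int_{\partial\Omega} |\nabla u|^2\, d\sigma
\le C\int_\Omega |\nabla u|^2\, dx + C \int_\Omega |\mathcal{L}_\varep u|\,|\nabla u|\, dx.
\end{equation*}
Applying this to $u_\varep$ and using Cauchy--Schwarz,
\begin{equation*}
\int_{\partial\Omega}|\nabla u_\varep|^2\, d\sigma
\le C\|\nabla u_\varep\|_{L^2(\Omega)}^2 + C\|g\|_{L^2(\Omega)}\|\nabla u_\varep\|_{L^2(\Omega)}
\le C\lambda\,\|f\|_{L^2(\Omega)}^2 + C\lambda^{3/2}\|f\|_{L^2(\Omega)}^2,
\end{equation*}
which gives (\ref{estimate-4.7}).

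The main obstacle is the Rellich step itself. In the naive Rellich identity (\ref{Rellich-identity}) applied to $\mathcal{L}_\varep$, the coefficient-derivative term $-\int_\Omega h_k\partial_k\{A(x/\varep)\}\nabla u\cdot\nabla u\, dx$ is formally of order $\varep^{-1}$ and must be controlled uniformly in $\varep$. On a Lipschitz boundary one cannot simply absorb this into a compactness argument as in the smooth case of Lemma \ref{Rellich-lemma}; the content of \cite{Kenig-Shen-2} is precisely to obtain this uniform bound, using the layer-potential machinery and the flux correctors $F$ of Lemma \ref{lemma-2.0} (whose boundedness requires the H\"older hypothesis on $A$). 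Once that tool is in hand, everything else reduces to the routine $L^2$ bounds on $u_\varep$, $\nabla u_\varep$, and $R_{\varep,\lambda}(f)$ recorded above. I also note that on a smoother domain one could integrate by parts in $\lambda\int u_\varep\,(h\cdot\nabla u_\varep)\,dx$ to replace $\lambda^{3/2}$ by $\lambda$, as in the proof of Theorem \ref{theorem-4.1}; on a merely Lipschitz boundary this refinement is not needed and is not pursued.
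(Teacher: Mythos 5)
Your proposal has a genuine gap at exactly the step you flag as the ``main obstacle.'' The Rellich estimate you want,
\begin{equation*}
\int_{\partial\Omega}|\nabla u|^2\,d\sigma \le C\int_\Omega|\nabla u|^2\,dx + C\int_\Omega|\mathcal L_\varep u|\,|\nabla u|\,dx,
\end{equation*}
is not what \cite{Kenig-Shen-2} proves and is not what the paper's Lemma~\ref{lemma-4.9} provides. The $L^2$ Rellich theory on Lipschitz domains with oscillating coefficients (layer potentials plus the flux correctors $F$) gives, for $\mathcal L_\varep u=0$, the bound $\|\nabla u\|_{L^2(\partial\Omega)}\le C\|u\|_{H^1(\partial\Omega)}$; the paper's Lemma~\ref{lemma-4.9} extends this to $\mathcal L_\varep u=f$ by subtracting a Newtonian potential, and the resulting right-hand side involves $\|f\|_{L^2(\Omega)}^2$, not $\|f\|_{L^2(\Omega)}\|\nabla u\|_{L^2(\Omega)}$. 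Applying Lemma~\ref{lemma-4.9} directly to $u_\varep=S_{\varep,\lambda}(f)$ on $\Omega$ (where $u_\varep=0$ on $\partial\Omega$ kills the tangential and trace terms) yields only $\int_{\partial\Omega}|\nabla u_\varep|^2\,d\sigma\le C\|\lambda u_\varep+R_{\varep,\lambda}(f)\|_{L^2(\Omega)}^2\le C\lambda^2$, which is a factor $\sqrt\lambda$ worse than what you need. Your stronger form would require controlling the coefficient-derivative term $\int_\Omega h_k\partial_k\{A(x/\varep)\}\nabla u\cdot\nabla u$ uniformly in $\varep$ on a Lipschitz boundary, which is precisely what one cannot do by elementary means (that is why Lemma~\ref{lemma-4.2}, which localizes $h$ near the boundary, carries the extra $\varep^{-1}\int_{\Omega_\varep}|\nabla u_\varep|^2$ and why Theorem~\ref{theorem-4.1} then needs a $C^{1,1}$ domain to dispose of it).

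The paper's actual proof is a different and more delicate argument that gains the missing factor of $\sqrt\lambda$. It sets $w_\varep(x,t)=u_\varep(x)\cosh(\sqrt\lambda\,t)$ on the $(d{+}1)$-dimensional Lipschitz domain $\Omega_T=\Omega\times(0,T)$, so that $\{\mathcal L_\varep-\partial_t^2\}w_\varep=R_{\varep,\lambda}(f)\cosh(\sqrt\lambda\,t)$, and applies Lemma~\ref{lemma-4.9} on $\Omega_T$. The right-hand side involves $\lambda\int_0^T\cosh^2(\sqrt\lambda\,t)\,dt$, while the left-hand side carries the same factor $\int_0^T\cosh^2(\sqrt\lambda\,t)\,dt\sim\lambda^{-1/2}\cosh^2(\sqrt\lambda\,T)$; dividing through gives $\int_{\partial\Omega}|\nabla u_\varep|^2\,d\sigma\le C\lambda^{3/2}$. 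If you want to complete your argument, you need either to prove your claimed Rellich inequality (which does not appear to be available uniformly in $\varep$ on Lipschitz domains) or to adopt this extension trick.
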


\begin{remark}
{\rm
Recall from (\ref{counter-example}) that the upper bound (\ref{estimate-4.7}) is sharp when $d=1$ and $\Omega$ is an interval.
}
\end{remark}

The proof of Theorem \ref{theorem-4.7}
relies on the Rellich estimate in the following lemma.

\begin{lemma}\label{lemma-4.9}
Assume that $A$ and $\Omega$ satisfy the same conditions as in Theorem \ref{theorem-4.7}.
Suppose that $u_\varep\in H^1(\Omega; \br^m)$ and
$\mathcal{L}_\varep (u_\varep)=f$ in $\Omega$ for some $f\in L^2(\Omega; \br^m)$.
We further assume that $u_\varep \in H^1(\partial\Omega; \br^m)$.
Then
\begin{equation}\label{estimate-4.9}
\int_{\partial\Omega} |\nabla u_\varep|^2\, d\sigma
\le C \int_{\partial\Omega} 
|\nabla_{\tan} u_\varep|^2\, d\sigma
+C \int_{\partial \Omega} |u_\varep|^2\, d\sigma
+ C\int_\Omega |f|^2\, dx,
\end{equation}
where $\nabla_{\tan} u_\varep$ denotes the tangential gradient of $u_\varep$ on $\partial\Omega$ and
$C$ depends only on $A$ and $\Omega$.
\end{lemma}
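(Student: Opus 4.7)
The plan is to derive (\ref{estimate-4.9}) from the Rellich identity (\ref{Rellich-identity}) of Lemma~\ref{Rellich-lemma}, applied with a vector field $h \in C_0^1(\br^d; \br^d)$ chosen so that $n \cdot h \ge c_0 > 0$ on $\partial\Omega$; such $h$ exists because $\Omega$ is Lipschitz. By ellipticity the left-hand side of (\ref{Rellich-identity}) is bounded below by $c_0 \kappa \int_{\partial\Omega} |\nabla u_\varep|^2\, d\sigma$, so it suffices to estimate each of the five terms on the right by the right-hand side of (\ref{estimate-4.9}), modulo a small multiple of $\int_{\partial\Omega} |\nabla u_\varep|^2\, d\sigma$ that can be absorbed.

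The first boundary integral on the right of (\ref{Rellich-identity}) factors $h_k(n_k\partial_i - n_i\partial_k) u_\varep^\alpha$, which is antisymmetric in $(i,k)$ and therefore involves only tangential derivatives of $u_\varep$ on $\partial\Omega$. Cauchy--Schwarz together with $|A(\cdot)| \le \kappa^{-1}$ then bounds it by $\eta\int_{\partial\Omega}|\nabla u_\varep|^2\, d\sigma + C_\eta \int_{\partial\Omega}|\nabla_{\tan} u_\varep|^2\, d\sigma$, with the first piece absorbed. The interior terms containing $\text{div}(h)$ and $\partial_i h_k$ are controlled by $C\|\nabla u_\varep\|_{L^2(\Omega)}^2$, and the $f$-term by $C\|f\|_{L^2(\Omega)}^2 + C\|\nabla u_\varep\|_{L^2(\Omega)}^2$. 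Testing $\mathcal{L}_\varep u_\varep = f$ against $u_\varep$ and handling the conormal boundary contribution with Young's inequality and the Poincar\'e/trace-type estimate $\|u_\varep\|_{L^2(\Omega)} \le C(\|\nabla u_\varep\|_{L^2(\Omega)} + \|u_\varep\|_{L^2(\partial\Omega)})$ yields
\[
\|\nabla u_\varep\|_{L^2(\Omega)}^2 \le C\|f\|_{L^2(\Omega)}^2 + C\|u_\varep\|_{L^2(\partial\Omega)}^2 + \eta\,\|\nabla u_\varep\|_{L^2(\partial\Omega)}^2,
\]
so all four of these contributions fit the desired bound, the last term again being absorbable.

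The main obstacle is the remaining interior integral $\int_\Omega h_k\,\partial_k\{a_{ij}^{\alpha\beta}(x/\varep)\}\,\partial_i u_\varep^\alpha\,\partial_j u_\varep^\beta\, dx$, whose integrand carries a spurious $\varep^{-1}$ factor from the chain rule; a naive Cauchy--Schwarz would destroy the $\varep$-uniform conclusion. To neutralize this factor I would exploit the skew-symmetric flux potentials $F_{kij}^{\alpha\beta}$ of Lemma~\ref{lemma-2.0}, which are bounded in $L^\infty(Y)$ because $A$ is H\"older continuous. Using the decomposition $a_{ij}^{\alpha\beta}(y) = \hat a_{ij}^{\alpha\beta} - b_{ij}^{\alpha\beta}(y) - a_{ik}^{\alpha\gamma}(y)\,\partial_{y_k}\chi_j^{\gamma\beta}(y)$ and the relation $b_{ij}^{\alpha\beta} = \partial_{y_k}F_{kij}^{\alpha\beta}$, an integration by parts in $x_k$ trades the $\varep^{-1}$ for an $\varep$; the skew-symmetry $F_{kij}^{\alpha\beta} = -F_{ikj}^{\alpha\beta}$ cancels the would-be second-order terms on $u_\varep$, and the remainder is controlled by $C\|\nabla u_\varep\|_{L^2(\Omega)}^2$ together with an $O(\varep)$ correction already handled in the previous paragraph. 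This is the $\varep$-uniform Rellich mechanism developed in \cite{Kenig-Shen-2} and referenced in Remark~\ref{remark-1.3}, which one could alternatively invoke directly.
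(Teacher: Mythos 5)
Your plan diverges substantially from the paper's, and it has a genuine gap stemming from a mismatch of hypotheses. Lemma~\ref{lemma-4.9} only assumes that $A$ is H\"older continuous (the conditions of Theorem~\ref{theorem-4.7}), that $\Omega$ is Lipschitz, and that $u_\varep\in H^1(\Omega)\cap H^1(\partial\Omega)$. The Rellich identity of Lemma~\ref{Rellich-lemma}, which you take as your starting point, requires $A$ to be Lipschitz continuous and $u_\varep\in H^2(\Omega)$: the term $\int_\Omega h_k\,\partial_k\{a_{ij}^{\alpha\beta}(x/\varep)\}\,\partial_i u_\varep^\alpha\,\partial_j u_\varep^\beta\,dx$ is literally undefined when $A$ is merely H\"older continuous, and the identity itself is derived from the divergence theorem applied to quantities involving $\nabla^2 u_\varep$. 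So the very first step is not available under the stated hypotheses. Even granting Lipschitz $A$, the sketch for neutralizing the $\varep^{-1}$ in that term is not worked out: the flux correctors $F_{kij}^{\alpha\beta}$ give a divergence structure for $b_{ij}^{\alpha\beta}=\hat a_{ij}^{\alpha\beta}-a_{ij}^{\alpha\beta}-a_{ik}^{\alpha\gamma}\partial_k\chi_j^{\gamma\beta}$, not for $\partial_k\{a_{ij}^{\alpha\beta}\}$, and it is not clear that the proposed integration by parts closes without generating boundary terms that are as bad as the one you started with. You acknowledge this by suggesting one could ``invoke directly'' the $\varep$-uniform Rellich estimates of \cite{Kenig-Shen-2}; at that point the argument is no longer a proof but a citation of the needed result.

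The paper's route is different and avoids the issue entirely. It treats the homogeneous case $f=0$ as a black box: the estimate $\|\nabla u_\varep\|_{L^2(\partial\Omega)}\le C\|u_\varep\|_{H^1(\partial\Omega)}$ for $\mathcal{L}_\varep$-harmonic functions in Lipschitz domains with H\"older continuous $A$ is the uniform Rellich estimate proved in \cite{Kenig-Shen-2} (by a compactness argument, not by the elementary Rellich identity). For $f\ne 0$, the paper subtracts off the Newtonian-type particular solution $w_\varep^\alpha(x)=\int_\Omega\Gamma_\varep^{\alpha\beta}(x,y)f^\beta(y)\,dy$, where $\Gamma_\varep$ is the fundamental matrix, so that $u_\varep-w_\varep$ solves the homogeneous system; the boundary bounds $\|\nabla w_\varep\|_{L^2(\partial\Omega)}+\|w_\varep\|_{L^2(\partial\Omega)}\le C\|f\|_{L^2(\Omega)}$ are obtained by duality, Fubini, the nontangential-maximal-function estimate of \cite[Theorem 3.5]{Kenig-Shen-2}, and the pointwise decay $|\Gamma_\varep(x,y)|\le C|x-y|^{2-d}$. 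Your proposal is missing both this reduction to the homogeneous case and the potential-theoretic bound for $w_\varep$; without them, and without the ability to use the Rellich identity directly, the argument does not go through.
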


\begin{proof}
We first point out  that in the case $f=0$, the estimate (\ref{estimate-4.9})
was proved in \cite{Kenig-Shen-2} for Lipschitz domains with connected boundaries.
If $\partial\Omega$ is not connected, the estimate 
\begin{equation}\label{4.9-1}
\|\nabla u_\varep\|_{L^2(\partial\Omega)}
\le C\,  \| u_\varep \|_{H^1(\partial\Omega)}
\end{equation}
follows from the case of connected boundary by a localization argument.

If $f=(f^1, \dots, f^m)\neq 0$, we define $w_\varep=(w_\varep^1(x), \dots, w_\varep^m(x))$ by
$$
w^\alpha_\varep (x)=\int_\Omega \Gamma_\varep^{\alpha\beta} (x,y) f^\beta (y)\, dy,
$$
where $\Gamma_\varep (x,y)$ is the matrix of fundamental
solutions for $\mathcal{L}_\varep$ in $\br^d$, with pole at $y$.
Then $w_\varep\in H^1(\Omega;\br^m)$ and $\mathcal{L}_\varep (w_\varep)=f$ in $\Omega$.
We claim that
\begin{equation}\label{4.9-3}
\int_{\partial\Omega} |\nabla w_\varep|^2\, d\sigma 
+\int_{\partial\Omega} |w_\varep|^2\, d\sigma
\le C \int_\Omega |f|^2\, dx.
\end{equation}
Assume the claim (\ref{4.9-3}) for a moment.
Note that $u_\varep -w_\varep\in H^1(\Omega)$,
$\mathcal{L}_\varep (u_\varep -w_\varep)=0$ in $\Omega$, and
$u_\varep-w_\varep\in H^1(\partial\Omega)$.
In view of estimate (\ref{4.9-1}) for the case $f=0$, we obtain
$ \|\nabla (u_\varep-w_\varep)\|_{L^2(\partial\Omega)}
\le C\, \| u_\varep-w_\varep\|_{H^1(\partial\Omega)}$.
This, together with (\ref{4.9-3}), yields that
$$
\aligned
\|\nabla u_\varep\|_{L^2(\partial\Omega)}
& \le C\, \|u_\varep -w_\varep\|_{H^1(\partial\Omega)} + \|\nabla w_\varep\|_{L^2(\partial\Omega)}\\
&\le C\,  \|\nabla_{\tan} u_\varep\|_{L^2(\partial\Omega)}
+C\,  \|u_\varep\|_{L^2(\partial\Omega)} + C\, \| \nabla w_\varep\|_{L^2(\partial\Omega)}
+C \| w_\varep\|_{L^2(\partial\Omega)}
\\
&
\le C\, \|\nabla_{\tan} u_\varep\|_{L^2(\partial\Omega)}
+C \, \|u_\varep\|_{L^2(\partial\Omega)}
+ C\, \| f\|_{L^2(\Omega)}.
\endaligned
$$

It remains to prove (\ref{4.9-3}). We will assume that $f\in C_0^1(\Omega; \br^m)$; the general case follows by a limiting 
argument.
Let $g=(g^1, \dots, g^m)\in L^2(\partial\Omega; \br^m)$.
It follows from Fubini's theorem as well as the Cauchy inequality that
\begin{equation}\label{4.9-7}
\aligned
\big|\int_{\partial\Omega} \frac{\partial w_\varep^\alpha}{\partial x_i}\,  g^\alpha\, d\sigma\big|
& =\big|\int_\Omega
f^\beta (y)\left\{\int_{\partial\Omega}
\frac{\partial}{\partial x_i} \left\{ \Gamma_\varep^{\alpha\beta} (x,y)\right\} g^\alpha (x)\, d\sigma(x)\right\}\, dy\big|\\
& \le \| f\|_{L^2(\Omega)} \| v_\varep\|_{L^2(\Omega)},
\endaligned
\end{equation}
where $v_\varep =(v_\varep^1, \dots, v_\varep^m)$ and
$$
v^\beta_\varep (y)=
\int_{\partial\Omega}
\frac{\partial}{\partial x_i}
\left\{ \Gamma_\varep^{\alpha\beta} (x,y)\right\} g^\alpha (x)\, d\sigma(x).
$$
By \cite[Theorem 3.5]{Kenig-Shen-2}, we have
$$
\| v_\varep\|_{L^2(\Omega)} \le C\, \|(v_\varep)^*\|_{L^2(\partial\Omega)} \le C\, \| g\|_{L^2(\partial\Omega)},
$$
where $(v_\varep)^*$ denotes the nontangential maximal function of $v_\varep$.
In view of (\ref{4.9-7}), this, by duality, implies that $\|\nabla w_\varep\|_{L^2(\partial\Omega)}\le C \, \| f\|_{L^2(\Omega)}$.

Finally,  we note that since
$|\Gamma_\varep (x,y)|\le C |x-y|^{2-d}$ (see \cite{AL-1987}),
$$
|w_\varep (x)|\le C \int_{\Omega} \frac{|f(y)|}{|x-y|^{d-2}}\, dy
\le C \left\{ \int_\Omega \frac{|f(y)|^2}{|x-y|^{d-2}}\, dy\right\}^{1/2}.
$$
This yields the estimate $\| w_\varep\|_{L^2(\partial\Omega)}
\le C \| f\|_{L^2(\Omega)}$.
\end{proof}

\begin{proof}[\bf Proof of Theorem \ref{theorem-4.7}]

We may assume that $\|f \|_{L^2(\Omega)}=1$.
Consider the function
\begin{equation}\label{4.10-1}
w_\varep (x,t)=u_\varep (x) \cosh (\sqrt{\lambda}\, t)\qquad \text{ in } \Omega_T
\end{equation}
where $\Omega_T =\Omega \times (0, T)$ and $T=\text{diam}(\Omega)$.
Note that $\Omega_T$ is a bounded Lipschitz domain in $\br^{d+1}$
and $w_\varep\in H^1(\Omega_T)$.
Since $\mathcal{L}_\varep (u_\varep)=\lambda u_\varep +R_{\varep, \lambda} (f)$ in $\Omega$, it 
follows that
$$
\left\{ \mathcal{L}_\varep -\frac{\partial^2}{\partial t^2} \right\} w_\varep =R_{\varep, \lambda} (f) \cosh(\sqrt{\lambda}\, t)
\qquad \text{ in } \Omega_T.
$$
In view of Lemma \ref{lemma-4.9} we obtain
$$
\int_{\partial \Omega_T}
|\nabla_{x,t} w|^2\, d\sigma (x,t)
\le C \int_{\partial\Omega_T} |\nabla_{\tan} w|^2\, d\sigma (x,t)
+C \int_{\Omega_T}
|R_{\varep, \lambda } (f) \, \cosh(\sqrt{\lambda}\, t)|^2\, dxdt.
$$
This implies that
\begin{equation}\label{4.10-3}
\aligned
&\int_0^T |\cosh(\sqrt{\lambda}\, t)|^2\, dt \int_{\partial\Omega}|\nabla u_\varep|^2\, d\sigma\\
&\qquad \le C \lambda |\cosh(\sqrt{\lambda}\, T)|^2 
+C \lambda \int_0^T |\cosh(\sqrt{\lambda}\, t)|^2\, dt,
\endaligned
\end{equation}
where we have used the fact $w_\varep=0$ on $\partial\Omega \times (0,T)$ as well
as estimates of $\|\nabla u_\varep\|_{L^2(\Omega)}$
and $\|R_{\varep, \lambda}(f)\|_{L^2(\Omega)}$ in (\ref{4.1.5}).
Finally, since
$$
\int_0^T |\cosh(\sqrt{\lambda}\, t)|^2\, dt \sim \frac{1}{\sqrt{\lambda}} e^{2\sqrt{\lambda} T}\sim \frac{1}{\sqrt{\lambda}}
|\cosh (\sqrt{\lambda}\, T)|^2,
$$
we may deduce from (\ref{4.10-3}) that
$$
\int_{\partial\Omega}|\nabla u_\varep|^2\, d\sigma \le C \lambda^{3/2}.
$$
This finishes the proof.
\end{proof}



\section{Lower bounds}

In this section we give the proof of Theorem \ref{theorem-C}.
Throughout this section
 we will assume that $ m=1$ and $\Omega$ is a bounded $C^2$ domain in
$\br^d$, $d\ge 2$.
We will also assume that $A$ satisfies (\ref{ellipticity})-(\ref{periodicity}), $A^*=A$, and
$A$ is Lipschitz continuous. 

Recall that $\Phi_\varep (x) =\left( \Phi_{\varep, i} (x) \right)_{1\le i\le d}$ denotes the Dirichlet correctors for $\mathcal{L}_\varep$ in $\Omega$.

\begin{lemma}\label{lemma-5.1}
Let $J(\Phi_\varep)$ denote the absolute value of the determinant of the $d\times d$ matrix $\left(\frac{\partial \Phi_{\varep, i}}{\partial x_j}\right)$.
Then there exist constants $\varep_0>0$ and $c>0$,
depending only on $A$ and $\Omega$, such that for $0<\varep<\varep_0$,
$$
J(\Phi_\varep) (x) \ge c, \quad \text{ if }\ \  x\in \Omega \text{ and } \text{\rm dist}(x, \partial\Omega)\le c\,  \varep.
$$
\end{lemma}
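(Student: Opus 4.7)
The plan is to reduce the Jacobian bound to a scalar boundary derivative, establish a uniform-in-$\varep$ lower bound on that derivative, and spread the estimate to an $\varep$-thin strip using continuity of $\nabla\Phi_\varep$ at sub-$\varep$ scales.

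First, fix $x_0\in\partial\Omega$ and set $n=n(x_0)$. Since $\Phi_\varep|_{\partial\Omega}=P$, the tangential derivatives of $\Phi_{\varep,j}-P_j$ vanish on $\partial\Omega$, so $\nabla(\Phi_{\varep,j}-P_j)(x_0)$ is parallel to $n$. The Jacobian matrix at $x_0$ is therefore a rank-one perturbation of the identity, $I+n\otimes\alpha$ with $\alpha_j=\partial_n(\Phi_{\varep,j}-P_j)(x_0)$, and a direct computation yields
\[
J(\Phi_\varep)(x_0)=|n\cdot\partial_n\Phi_\varep(x_0)|=|\partial_n u_\varep(x_0)|,
\]
where $u_\varep(x):=n\cdot\Phi_\varep(x)$. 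The scalar function $u_\varep$ satisfies $\mathcal{L}_\varep u_\varep=0$ in $\Omega$, $u_\varep=u_0:=n\cdot x$ on $\partial\Omega$, with $\partial_n u_0(x_0)=1$. Lemma~\ref{lemma-2.1} gives $\|u_\varep-u_0\|_{L^\infty(\Omega)}\le C\varep$.

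Second, the core step is to show $\partial_n u_\varep(x_0)\ge c_0>0$ uniformly in $\varep\in(0,\varep_0]$ and $x_0\in\partial\Omega$. The main obstacle is that a direct exponential Hopf barrier for $\mathcal{L}_\varep$ produces a first-order term $\varep^{-1}(\partial_{y_i}a_{ij})(x/\varep)\partial_j\psi$ that forces the barrier exponent to scale like $\varep^{-1}$ and so degenerates as $\varep\to 0$. I would therefore argue by compactness and contradiction: suppose the claim failed, producing $\varep_k\to 0$ and $x_k\in\partial\Omega$ with $\partial_n u_{\varep_k}(x_k)\to 0$. Pass to subsequences so that $x_k\to x_*\in\partial\Omega$ and $y_k:=x_k/\varep_k\to z_*$ in $Y$, and rescale via $U_k(z)=\varep_k^{-1}(u_{\varep_k}(x_k+\varep_k z)-u_{\varep_k}(x_k))$. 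Then $U_k$ is $-\mathrm{div}(A(y_k+z)\nabla\cdot)$-harmonic on the rescaled domain $\Omega_k=(\Omega-x_k)/\varep_k$ with boundary data $n(x_k)\cdot z$. The Avellaneda--Lin uniform Lipschitz estimate combined with Schauder-type $C^{1,\alpha}_{\mathrm{loc}}$ bounds (uniform in $k$ because the translated coefficients $A(y_k+\cdot)$ have Lipschitz constant independent of $k$) yield, along a subsequence, $U_k\to U$ in $C^{1,\alpha'}_{\mathrm{loc}}$ on the half-space limit $H$, where $U$ is the Dirichlet corrector for $-\mathrm{div}(A(z_*+z)\nabla\cdot)$ on $H$ with boundary data $n_*\cdot z$. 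This would force $\partial_{n_*}U(0)=0$, contradicting the strict Hopf-type positivity for the half-space corrector problem, derivable from boundedness of the ``boundary layer corrector'' $U(z)-n_*\cdot z$ together with the classical Hopf lemma applied to the smooth-coefficient operator $-\mathrm{div}(A(z_*+z)\nabla\cdot)$ on $H$.

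Third, for the extension, take $x\in\Omega_{c\varep}$ and let $x_0\in\partial\Omega$ be a closest boundary point, so $|x-x_0|\le c\varep$. Writing $\Phi_\varep=P+\varep\chi(\cdot/\varep)+r_\varep$ with $\mathcal{L}_\varep r_\varep=0$ and $\|r_\varep\|_{L^\infty(\Omega)}\le C\varep$ (by the maximum-principle computation preceding Lemma~\ref{lemma-2.1}), I use the Avellaneda--Lin boundary $C^{1,\alpha}$ estimate for $r_\varep$ (whose boundary data $-\varep\chi(\cdot/\varep)$ has Lipschitz constant $O(1)$) together with the Schauder $C^{1,\alpha}$ regularity of $\chi$ on $Y$ (available since $A$ is Lipschitz and $m=1$) to obtain
\[
|\nabla\Phi_\varep(x)-\nabla\Phi_\varep(x_0)|\le C c^\alpha
\]
for some $\alpha>0$ depending only on $A$ and $d$. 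Since $J(\Phi_\varep)$ is a polynomial, hence locally Lipschitz, in the uniformly bounded entries of $\nabla\Phi_\varep$, one has $|J(\Phi_\varep)(x)-J(\Phi_\varep)(x_0)|\le Cc^\alpha$. Choosing $c$ small relative to $c_0$ then yields $J(\Phi_\varep)(x)\ge c_0/2>0$, completing the proof.
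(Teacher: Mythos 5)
Your reduction of $J(\Phi_\varep)$ on $\partial\Omega$ to the scalar normal derivative $\partial_n(n\cdot\Phi_\varep)$, and your extension from $\partial\Omega$ to the $c\varep$-strip via a rescaled $C^{1,\alpha}$ modulus of continuity for $\nabla\Phi_\varep$, are both essentially the same as the paper's (the paper writes the boundary determinant as $|\partial_d\Phi_{\varep,d}(0)|$ in adapted coordinates and gets $|\nabla\Phi_\varep(x)-\nabla\Phi_\varep(y)|\le C\varep^{-\alpha}|x-y|^\alpha$ for $|x-y|\le\varep$ by dilation; your decomposition $\Phi_\varep=P+\varep\chi(\cdot/\varep)+r_\varep$ is a slightly longer route to the same estimate). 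Where you genuinely diverge is the key step: the uniform-in-$\varep$ strict positivity of $\partial_n(n\cdot\Phi_\varep)$ on $\partial\Omega$. The paper does this directly at scale $\varep$: it builds an explicit auxiliary solution $u_\varep$ on $U(s_0\varep)$ with boundary data chosen so that $\Phi_{\varep,d}+u_\varep\ge 0$ by the maximum principle, controls $|\nabla u_\varep(0)|$ via the Avellaneda--Lin boundary Lipschitz estimate, and then rescales to a unit ball $B(Q,1)$ where the classical Hopf lemma applies. You instead argue by contradiction and blow-up, passing to a half-space corrector problem.

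The gap in your argument is the final invocation of the Hopf lemma. From $|U-n_*\cdot z|\le C_0$ you only get $U(z)>0$ when $n_*\cdot z>C_0$; you do \emph{not} get $U>0$ near the boundary point $0$, which is exactly what the Hopf lemma requires as a hypothesis. The assertion that this is ``derivable from boundedness of the boundary layer corrector together with the classical Hopf lemma'' conceals a real Phragm\'en--Lindel\"of type maximum principle for a bounded-below solution of a divergence-form elliptic equation in the unbounded slab $\{0<n_*\cdot z<C_0\}$ (or equivalently in the half-space). Such a statement is true but is not ``classical Hopf,'' and it needs its own proof; the naive attempt to truncate and use the maximum principle on $H\cap B_R$ fails because $U$ can be as small as $-C_0$ on the cut-off part of the boundary $\partial B_R\cap H$. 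This is precisely the issue the paper's construction is designed to avoid: by choosing the barrier boundary data $g$ on the bounded region $U(s_0\varep)$ (rather than blowing up to an unbounded domain), the inequality $\Phi_{\varep,d}+u_\varep\ge0$ follows from the \emph{ordinary} maximum principle, and Hopf is then applied on a bounded unit ball with an explicit rescaled operator $\mathcal{L}_{2t_0^{-1}}$. If you want to keep your compactness route, you need to either prove the half-space/slab Phragm\'en--Lindel\"of principle for the limit operator, or, more in the spirit of the paper, replace the passage to the half-space limit by a comparison with an auxiliary solution on a fixed bounded blow-up region and establish the sign there before applying Hopf.
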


\begin{proof}
Using dilation and the standard $C^{1, \alpha}$ estimate for $\mathcal{L}_1$, it is easy to see that
$$
|\nabla \Phi_\varep (x)-\nabla \Phi_\varep (y)|\le C \varep^{-\alpha} |x-y|^\alpha,
$$
for $x, y\in \Omega$ with $|x-y|\le \varep$, where $0<\alpha<1$ and $C$ depends only on $\alpha$, $A$, and $\Omega$.
This, together with the fact $\|\nabla \Phi_\varep\|_\infty\le C$, shows that it suffices to prove
$J(\Phi_\varep ) (x)\ge c>0$ for $x\in \partial\Omega$.

Next, we fix $P\in \partial\Omega$.
By translation and rotation we may assume that $P=0$ and
$$
\aligned
\Omega\cap \big\{ (x^\prime, x_d): \ & |x^\prime|<r_0 \text{ and } |x_d|<r_0 \big\}\\
&=\big\{ (x^\prime, x_d): \ |x^\prime|<r_0 \text{ and } \psi(x^\prime)<x_d<r_0 \big\},
\endaligned
$$
where $\psi:\br^{d-1} \to \br$ is a $C^2$ function such that $\psi (0)=|\nabla\psi (0)|=0$
and $\|\nabla^2 \psi\|_\infty\le M_0$.
Define
\begin{equation}\label{5.0.0}
U(r)=\big\{(x^\prime, x_d)\in \br^d: \ |x^\prime|<r \text{ and } \psi(x^\prime)<x_d <r\big\}.
\end{equation}
Since $\Phi_\varep (x)=x$ on $\partial\Omega$, we see that
$$
J(\Phi_\varep) (0)=\left|\frac{\partial\Phi_{\varep, d}}{\partial x_d} (0)\right|.
$$
Also recall that $ |\Phi_{\varep, d} (x) -x_d|\le C_0\, \varep$, where $C_0$ depends only on $A$.

Let $s_0> 4C_0$ be a large constant to be determined. For $0<\varep< (r_0/s_0)$,
let  $u_\varep$ be the solution of $\mathcal{L}_\varep (u_\varep)=0$ in $U(s_0\varep)$ with the Dirichlet data $g$
on $\partial U(s_0\varep)$, given by
\begin{equation}\label{5.0.1}
g(x)= \left\{\aligned
& M_0 |x^\prime|^2 &\quad & \text{ if } x_d =\psi (x^\prime) \text{ and } |x^\prime|<s_0\varep,\\
& M_0 (s_0\varep)^2 +C_0 \varep & \quad & \text{ if } |x^\prime|=s_0\varep \text{ and } \psi(x^\prime)<x_d<s_0\varep,\\
& 0 & \quad & \text{ if } x_d =s_0 \varep.
\endaligned
\right.
\end{equation}
Since $0\le g\le M_0 (s_0\varep)^2 +C_0\varep$, it follows from the maximum principle that 
$$
0\le u_\varep\le
M_0 (s_0\varep)^2 +C_0\varep \quad \text{ in } U(s_0\varep).
$$
By the boundary Lipschitz estimate in \cite[Lemma 20]{AL-1987}, we then obtain
\begin{equation}\label{5.0.2}
\aligned
|\nabla u_\varep (0)|  & \le 
C \left\{ s_0\varep+ (s_0\varep)^{-1} \max_{U(s_0\varep)} |u_\varep|  \right\}\\
& \le C_1 \left\{ s_0\varep +C_0 s_0^{-1} \right\},
\endaligned
\end{equation}
where $C_1$ depends only on $M_0$ and $A$.
Using $\Phi_{\varep, d} (x)\ge x_d-C_0\, \varep$ in $\Omega$ and $\Phi_{\varep, d} (x)=x_d$ on $\partial\Omega$,
it is easy to verify that $\Phi_{\varep, d} +g \ge 0$ on $\partial U(s_0\varep)$.
As a result, by the maximum principle, 
we also obtain $\Phi_{\varep, d} +u_\varep\ge 0$ on $U(s_0\varep)$.

Let $4C_0\le t_0<s_0$. We  consider the function
$$
w(x) =\Phi_{\varep,d} (t_0\varep x/2)  +u_\varep (t_0\varep x/2) \quad \text{ in } B =B (Q, 1),
$$
where $Q=(0, \dots, 0, 1)$ and $0< \varep< s_0^{-1} \min (r_0, (2 M_0)^{-1})$.
Note that 
$$
\mathcal{L}_{2t_0^{-1}} (w)=0 \quad \text{  in  }B \quad \text{ and }\quad 
\min_B w =w(0)=0.
$$
Thus, by the Hopf maximum principle (see e.g. \cite[p.330]{Evans}), we obtain
$$
\frac{\partial w}{\partial x_d} (0)\ge c_0 w (Q),
$$
where $c_0>0$ depends only on $t_0$ and $A$. It follows that
\begin{equation}\label{5.1.1}
\aligned
\frac{\partial \Phi_{\varep, d}}{\partial x_d} (0)  & \ge \frac{2c_0}{t_0\varep} 
\Phi_{\varep, d}  (0, \dots, 0, t_0\varep/2) -\frac{\partial u_\varep}{\partial x_d} (0)\\
&\ge 
\frac{2c_0}{t_0\varep} 
\Phi_{\varep, d}  (0, \dots, 0, t_0\varep/2)
-C_1 \big\{ s_0\varep +C_0 s_0^{-1} \big\},
\endaligned
\end{equation}
where we used the estimate (\ref{5.0.2}) as well as the fact $u_\varep\ge 0$.

Finally, note that if $t_0=4C_0$,
$$
\Phi_{\varep, d} (0, \dots, 0, t_0\varep/2)
\ge (t_0\varep/2) -C_0 \varep = (t_0\varep/4).
$$
 This, together with (\ref{5.1.1}) and the choice of $s_0=4C_1C_0/c_0$, yields
$$
\frac{\partial \Phi_{\varep, d}}{\partial x_d} (0)\ge \frac{c_0}{2}- C_1 s_0\varep -C_1 C_0 s_0^{-1}
\ge \frac{c_0}{8},
$$
for
$0<\varep<\varep_0$, where $\varep_0>0$ depends only on $A$ and $\Omega$.
The proof is complete.
\end{proof}

Since $\|\nabla \Phi_\varep\|_\infty\le C$,
it follows from Lemma \ref{lemma-5.1}
  that if $x\in \Omega$ and dist$(x, \partial\Omega)\le c\, \varep$, 
then the $d\times d$ matrix $(\nabla \Phi_\varep)$ is invertible at $x$ and
\begin{equation}\label{5.1-1}
c |w|\le  | (\nabla \Phi_\varep (x)) w |
\end{equation}
for any vector $w$ in $\br^d$.

\begin{lemma}\label{lemma-5.2}
Let $u_\varep$ be a Dirichlet eigenfunction for $\mathcal{L}_\varep$ in $\Omega$
with the associated eigenvalue $\lambda $ and $\| u_\varep \|_{L^2(\Omega)}
=1$. Then, if $0<\varep<\varep_0$,
\begin{equation}\label{estimate-5.2}
\frac{1}{\varep} \int_{\Omega_{c\varep}} |\nabla u_\varep|^2\, dx
\ge c\lambda -C\varep \lambda^2,
\end{equation}
where $c>0$ and $C>0$ depend only on $A$ and $\Omega$.
\end{lemma}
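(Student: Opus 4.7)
The plan is to transfer the problem from the oscillatory eigenfunction $u_\varep$ to the smoother companion $v_\varep$ governed by the homogenized operator $\mathcal{L}_0$, to use Lemma \ref{lemma-5.1} to invert the Dirichlet corrector matrix near $\partial\Omega$, and then to reduce the bulk integral over the boundary layer $\Omega_{c\varep}$ to a boundary integral controlled by a Rellich-Pohozaev identity for $\mathcal{L}_0$.

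First I would define $v_\varep\in H_0^1(\Omega)$ by $\mathcal{L}_0(v_\varep) = \lambda u_\varep$ in $\Omega$. Since $\Omega$ is $C^2$ and $\mathcal{L}_0$ has constant coefficients, elliptic regularity gives $v_\varep \in H^2(\Omega)$ with $\|\nabla^2 v_\varep\|_{L^2(\Omega)} \le C\lambda$, while the standard $L^2$ convergence rate yields $\|u_\varep - v_\varep\|_{L^2(\Omega)}\le C\varep\lambda$. Applying Remark \ref{remark-2.2} with source $f = \lambda u_\varep$ provides the key approximation
\[
\bigl\|\partial u_\varep/\partial x_i - (\partial \Phi_{\varep,j}/\partial x_i)(\partial v_\varep/\partial x_j)\bigr\|_{L^2(\Omega)}\le C\varep\lambda.
\]
Combined with Lemma \ref{lemma-5.1}, which asserts $|(\nabla\Phi_\varep(x))w|\ge c|w|$ for $x\in\Omega_{c\varep}$ and all $w\in\br^d$, the pointwise reverse triangle inequality (squared and integrated over $\Omega_{c\varep}$) gives
\[
\frac{1}{\varep}\int_{\Omega_{c\varep}}|\nabla u_\varep|^2\,dx \ge \frac{c}{\varep}\int_{\Omega_{c\varep}}|\nabla v_\varep|^2\,dx - C\varep\lambda^2.
\]

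Next I would convert the bulk integral of $|\nabla v_\varep|^2$ over $\Omega_{c\varep}$ into a boundary integral of $|\partial v_\varep/\partial n|^2$. Using the tubular parametrization $(y,t)\in\partial\Omega\times(0,c\varep)\mapsto y - tn(y)$, the vanishing of $v_\varep$ on $\partial\Omega$, and the Fundamental Theorem of Calculus applied to $\nabla v_\varep$ along inward normal lines, together with $\|\nabla^2 v_\varep\|_{L^2(\Omega)}\le C\lambda$, I obtain
\[
\frac{1}{\varep}\int_{\Omega_{c\varep}}|\nabla v_\varep|^2\,dx \ge c\int_{\partial\Omega}\bigl|\partial v_\varep/\partial n\bigr|^2\,d\sigma - C\varep\lambda^2.
\]
It therefore suffices to prove $\int_{\partial\Omega}|\partial v_\varep/\partial n|^2\,d\sigma \ge c\lambda - C\varep\lambda^2$. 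For this I would apply the Rellich-Pohozaev identity for the constant-coefficient operator $\mathcal{L}_0$ to $v_\varep$, with a smooth vector field $h$ chosen so that $h\cdot n\ge c_0 > 0$ on $\partial\Omega$ (taking $h = x - x_0$ in the star-shaped case, and a smooth extension of $n$ in general). The left-hand side then dominates $c\int_{\partial\Omega}|\partial v_\varep/\partial n|^2\,d\sigma$; on the right-hand side, the critical term $-2\lambda\int_\Omega u_\varep\, h\cdot\nabla v_\varep\,dx$ would be handled by first integrating by parts to move the derivative onto $u_\varep$, then substituting $\nabla u_\varep \approx (\nabla\Phi_\varep)\nabla v_\varep$ via Remark \ref{remark-2.2} (the approximation error, now paired against the $L^2$-bounded factor $v_\varep$, contributes only $O(\varep\lambda^2)$ rather than the naive $O(\varep\lambda^{5/2})$), and finally using $\|\Phi_\varep - P\|_\infty\le C\varep$ from Lemma \ref{lemma-2.1} together with a further integration by parts to absorb the $\nabla\Phi_\varep - I$ remainder.

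The main obstacle is this last step, namely extracting a clean positive leading term $\sim\lambda$ from the Rellich identity for a general $C^2$ domain. In the star-shaped case the computation is transparent: with $h = x - x_0$, $\text{div}(h) = d$ and $\partial_i h_k = \delta_{ik}$, so after the bookkeeping above the right-hand side evaluates to $2\lambda + O(\varep\lambda^2)$, producing the desired bound directly. In the non-star-shaped setting one must either tune $h$ to arrange that the resulting combination of $\lambda\int v_\varep^2\,\text{div}(h)\,dx$ and the remaining volume terms is bounded below by $c\lambda$, or, as the introduction suggests, supplement the algebra with a compactness argument that precludes $\int_{\partial\Omega}|\partial v_\varep/\partial n|^2\,d\sigma$ from being $o(\lambda)$ as $\varep\to 0$. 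Once this final lower bound is secured, chaining the three displayed inequalities yields the lemma.
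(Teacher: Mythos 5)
Your overall strategy closely mirrors the paper's: introduce the homogenized companion $v_\varep$ solving $\mathcal{L}_0(v_\varep)=\lambda u_\varep$, use the $H^1$ convergence estimate of Remark \ref{remark-2.2} together with the invertibility of $\nabla\Phi_\varep$ from Lemma \ref{lemma-5.1} to pass from $\frac{1}{\varep}\int_{\Omega_{c\varep}}|\nabla u_\varep|^2$ to $\frac{1}{\varep}\int_{\Omega_{c\varep}}|\nabla v_\varep|^2$, apply the Fundamental Theorem of Calculus and $\|\nabla^2 v_\varep\|_{L^2}\le C\lambda$ to replace the boundary-layer integral by $\int_{\partial\Omega}|\nabla v_\varep|^2\,d\sigma$, and finish with a Rellich identity for $\mathcal{L}_0$. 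All of this is correct and is exactly what the paper does.

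The genuine gap is at the final step, and you have correctly flagged it without resolving it. You worry that for a general $C^2$ domain you must either tune the vector field $h$ so that the volume terms in the Rellich identity come out bounded below by $c\lambda$, or else invoke a compactness argument. Neither is needed, and neither is what the paper does. The paper takes $h(x)=x$ (with the origin placed inside $\Omega$; no star-shapedness is assumed) and never uses a sign condition on $\langle x,n\rangle$. The positive leading term is extracted entirely from the volume side: after using $\int_\Omega\hat a_{ij}\partial_j v_\varep\partial_i v_\varep\,dx=\lambda\int_\Omega u_\varep v_\varep\,dx$ and integrating $-2\lambda\int_\Omega u_\varep\,x_k\partial_k v_\varep\,dx$ by parts twice (once to move the derivative onto $u_\varep$, once more after splitting $v_\varep=u_\varep+(v_\varep-u_\varep)$), the normalization $\|u_\varep\|_{L^2(\Omega)}=1$ produces the exact identity
\begin{equation*}
\int_{\partial\Omega}\langle x,n\rangle\,\hat a_{ij}\,\partial_j v_\varep\,\partial_i v_\varep\,d\sigma
=2\lambda+(d+2)\lambda\int_\Omega u_\varep(v_\varep-u_\varep)\,dx
+2\lambda\int_\Omega\partial_k u_\varep\,(v_\varep-u_\varep)\,x_k\,dx.
\end{equation*}
One then takes absolute values: the boundary term is bounded by $C\int_{\partial\Omega}|\nabla v_\varep|^2\,d\sigma$ (its sign is irrelevant), the second term is $O(\varep\lambda^2)$ by $\|u_\varep-v_\varep\|_{L^2}\le C\varep\lambda$, and the third term is $O(\varep\lambda^2)$ after inserting the corrector ansatz for $u_\varep-v_\varep$ and pairing the $H^1$ error against $\|x_k\partial_k u_\varep\|_{H^{-1}}\le C\lambda^{1/2}$, exactly the duality trick you anticipated. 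This yields $2\lambda\le C\int_{\partial\Omega}|\nabla v_\varep|^2\,d\sigma+C\varep\lambda^2$, which chains with your earlier inequalities to give the lemma. So the missing idea is simply to let the normalization $\|u_\varep\|_{L^2}=1$ generate the $2\lambda$ term on the volume side of the Rellich identity; no geometric hypothesis on $\Omega$ beyond $C^2$ and no compactness is required (compactness is used only later, in Lemma \ref{lemma-5.3}).
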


\begin{proof}
Let $v_\varep$ be the unique solution in $H^1_0(\Omega)$ to the equation
$\mathcal{L}_0 (v_\varep) =\lambda_\varep u_\varep$ in $\Omega$.
As in the proof of Theorem \ref{theorem-4.1}, we have
$\|\nabla v_\varep\|_{L^2(\Omega)} \le C \sqrt{\lambda}$ and $\|\nabla^2 v_\varep \|_{L^2(\Omega)}\le C \lambda$.
Moreover, it follows from (\ref{estimate-2.5})  that
\begin{equation}\label{5.2.1}
\|\nabla u_\varep -( \nabla \Phi_\varep)\nabla v_\varep \|_{L^2(\Omega)}
\le C \varep \lambda.
\end{equation}
Hence,
\begin{equation}
\aligned
\frac{1}{\varep}\int_{\Omega_{c\varep}} |\nabla u_\varep|^2\, dx
&\ge \frac{1}{2\varep} \int_{\Omega_{c\varep}} |(\nabla \Phi_{\varep}) \nabla v_\varep|^2\, dx
-C \varep \lambda^2\\
& \ge \frac{c}{\varep} \int_{\Omega_{c\varep}} |\nabla v_\varep|^2\, dx - C \varep \lambda^2,
\endaligned
\end{equation}
where we have used (\ref{5.1-1}) for the second inequality. Using
$$
\int_{\partial\Omega} |\nabla v_\varep|^2\, d\sigma
\le \frac{C}{\varep}\int_{\Omega_{c\varep}} |\nabla v_\varep|^2\, dx 
+ C\varep \int_{\Omega_{c\varep}} |\nabla^2 v_\varep |^2\, dx
$$
and $\|\nabla^2 v_\varep\|_{L^2 (\Omega)} \le C \lambda$, we further obtain
\begin{equation}\label{5.2.3}
\frac{1}{\varep}\int_{\Omega_{c\varep}} |\nabla u_\varep|^2\, dx
\ge c \int_{\partial\Omega} |\nabla v_\varep|^2\, d\sigma -C \varep\lambda^2.
\end{equation}
We will show that
\begin{equation}\label{5.2.4}
\lambda \le C\int_{\partial\Omega} |\nabla v_\varep|^2\, d\sigma + C \varep \lambda^2,
\end{equation}
which, together with (\ref{5.2.3}), yields the estimate (\ref{estimate-5.2}).

To see (\ref{5.2.4}), we may assume, without loss of generality, that $0\in \Omega$.
It follows by taking $h(x)=x$ in a Rellich identity for $\mathcal{L}_0$, similar to (\ref{Rellich-identity})
that
$$
\aligned
\int_{\partial\Omega} <x,n> \hat{a}_{ij} \frac{\partial v_\varep}{\partial x_j} \cdot \frac{\partial v_\varep}{\partial x_i}\, d\sigma
& =(2-d) \int_\Omega \hat{a}_{ij}\frac{\partial v_\varep}{\partial x_j }\cdot \frac{\partial v_\varep}{\partial x_i}\, dx
-2 \lambda \int_\Omega u_\varep \frac{\partial v_\varep}{\partial x_k} x_k \, dx\\
&=(2-d)\lambda \int_\Omega u_\varep v_\varep\, dx
-2 \lambda \int_\Omega u_\varep \frac{\partial v_\varep}{\partial x_k} x_k \, dx.
\endaligned
$$
This, together with 
$$
\aligned
2\int_\Omega u_\varep \frac{\partial v_\varep}{\partial x_k} x_k \, dx
& =-2\int_\Omega \frac{\partial u_\varep}{\partial x_k} v_\varep x_k \, dx
-2d\int_\Omega u_\varep v_\varep\, dx\\
&=d -2\int_\Omega \frac{\partial u_\varep}{\partial x_k} (v_\varep  -u_\varep) x_k\, dx
-2d \int_\Omega u_\varep v_\varep\, dx,
\endaligned
$$
obtained by integration by parts, gives
$$
\aligned
& \int_{\partial\Omega} <x,n> \hat{a}_{ij} \frac{\partial v_\varep}{\partial x_j} \cdot \frac{\partial v_\varep}{\partial x_i}\, d\sigma\\
& =2\lambda 
+(d+2)\lambda  \int_\Omega  u_\varep (v_\varep -u_\varep)\, dx
+2\lambda \int_\Omega \frac{\partial u_\varep}{\partial x_k}
(v_\varep -u_\varep) x_k \, dx.
\endaligned
$$
It follows that
\begin{equation}\label{5.2.6}
2\lambda   \le C \int_{\partial\Omega} |\nabla v_\varep|^2\, d\sigma
+ C \lambda \| u_\varep -v_\varep\|_{L^2(\Omega)}
+2 \lambda \left| \int_\Omega \frac{\partial u_\varep}{\partial x_k} (u_\varep -v_\varep ) x_k \, dx \right|.
\end{equation}

Finally, note that  $\| u_\varep -v_\varep\|_{L^2(\Omega)} \le C \varep \lambda$.
Also, the last term in the right hand side of (\ref{5.2.6}) is bounded
by
$$
\aligned
 &  2\lambda \left| \int_\Omega \frac{\partial u_\varep}{\partial x_k} \left[ u_\varep -v_\varep -(\Phi_{\varep, j} -x_j)
 \frac{\partial v_\varep}{\partial x_j} \right] x_k \, dx \right|
 + C \lambda\, \varep  \|\nabla u_\varep\|_{L^2(\Omega)}\| \nabla v_\varep\|_{L^2(\Omega)}\\
 &\le C\lambda\, \| \frac{\partial u_\varep}{\partial x_k} x_k \|_{H^{-1}(\Omega)}
 \| u_\varep -v_\varep -(\Phi_{\varep, j}-x_j) \frac{\partial v_\varep}{\partial x_j} \|_{H^1_0(\Omega)} + C\varep \lambda^2 \\
 &\le C \varep \lambda^2,
 \endaligned
 $$
 where we have used Theorem \ref{H-1-theorem}.
 This completes the proof of (\ref{5.2.4}).
 \end{proof}
 
 Let $\psi:\br^{d-1}\to \br$ be a $C^2$ function and $\psi (0)=|\nabla \psi (0)|=0$. Define
 $$
 \aligned
& Z_r =Z(\psi, r)  =\big\{ x=(x^\prime, x_d)\in \br^d: \, |x^\prime|<r \text{ and }  \psi(x^\prime)<x_d <r+\psi(x^\prime) \big\},\\
 & I_r = I(\psi, r) =\big\{ x=(x^\prime, x_d) \in \br^d: \, |x^\prime|< r \text{ and } x_d  =\psi(x^\prime) \big\}.
 \endaligned
 $$
 
 \begin{lemma}\label{lemma-5.3} Let $u\in H^1(Z_2)$.
 Suppose that $-\text{\rm div} (A\nabla u)  + E u =0$ in $Z_2$ and $u=0$ in $I_2$
 for some $E\in \br$.
Also assume that $|E| +\|\nabla A\|_\infty +\|\nabla^2\psi\|_\infty \le C_0$ and
\begin{equation}\label{5.3-1}
\int_{Z_1} |\nabla u|^2\, dx \ge c_0 \int_{Z_2} |\nabla u|^2\, dx
\end{equation}
for some $C_0>0, c_0>0$.
Then
\begin{equation}\label{5.3-2}
\int_{I_1} |\nabla u|^2\, d\sigma \ge c \int_{Z_2} |\nabla u|^2\, dx,
\end{equation}
 where $c>0$ depends only on the ellipticity constant $\kappa$ of $A$, $c_0$, and $C_0$.
  \end{lemma}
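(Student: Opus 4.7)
My plan is to argue by contradiction, combining a compactness argument with boundary unique continuation. Suppose the conclusion fails: then there exist sequences $\{u_n\}$, $\{A_n\}$, $\{E_n\}$, $\{\psi_n\}$ satisfying the hypotheses with the given $c_0, C_0$, normalized so $\int_{Z_2(\psi_n)} |\nabla u_n|^2\, dx = 1$ and $\int_{Z_1(\psi_n)}|\nabla u_n|^2\, dx \ge c_0$, yet $\int_{I_1(\psi_n)}|\nabla u_n|^2\, d\sigma \to 0$. I would first flatten the boundary via $\Phi_n(x', x_d) = (x', x_d + \psi_n(x'))$, which carries the fixed cylinder $D_r := B'(0,r)\times(0,r)$ onto $Z_r(\psi_n)$ and $B'(0,r)\times\{0\}$ onto $I_r(\psi_n)$. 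The pullback $v_n := u_n \circ \Phi_n$ solves a transformed divergence-form equation $-\text{div}(\widetilde A_n \nabla v_n) + \widetilde E_n v_n = 0$ in $D_2$ with $v_n = 0$ on the flat face $B'(0,2)\times\{0\}$; the transformed coefficients $\widetilde A_n$ inherit uniform ellipticity and a uniform Lipschitz bound from the hypotheses on $A_n$ and $\psi_n$.

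Standard boundary elliptic regularity (Lipschitz coefficients, flat face) then yields a uniform $H^2$ bound on $v_n$ in $D_{3/2}$. After extracting a subsequence, Arzel\`a--Ascoli produces $A_n \to A_\infty$ uniformly, $E_n \to E_\infty$, $\psi_n \to \psi_\infty$ in $C^1$, and $v_n \rightharpoonup v_\infty$ weakly in $H^2(D_{3/2})$, strongly in $H^1(D_{3/2})$. The limit $v_\infty$ solves the limiting elliptic equation with $v_\infty = 0$ on the flat face. By $H^2$ trace continuity, the assumption $\int_{I_1(\psi_n)}|\nabla u_n|^2\, d\sigma \to 0$ forces $\nabla v_\infty = 0$ on $B'(0,1)\times\{0\}$, so $v_\infty$ has vanishing Cauchy data on a relatively open piece of its boundary.

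To finish, I would extend $v_\infty$ by zero across the flat face to a neighborhood of $B'(0,1)\times\{0\}$. The extension lies in $H^1$ thanks to $v_\infty|_{\{x_d=0\}}=0$, and an integration-by-parts check using $\nabla v_\infty|_{\{x_d=0\}}=0$ shows the interface flux vanishes, so the extension solves a Lipschitz-coefficient divergence-form equation weakly in the full neighborhood (for any bounded extension of the coefficients to $\{x_d<0\}$). Since the extension vanishes on the open half $\{x_d<0\}$, Aronszajn's weak unique continuation principle forces $v_\infty \equiv 0$ throughout $D_{3/2}$. Strong $H^1$ convergence then yields $\int_{Z_1(\psi_n)}|\nabla u_n|^2 \to 0$, contradicting $c_0>0$.

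The main obstacle is obtaining the uniform $H^2$ estimates and the $L^2$ convergence of the boundary traces $\nabla v_n|_{\{x_d=0\}}$ needed to pass to the limit; this is precisely where the hypotheses $\|\nabla A\|_\infty, \|\nabla^2\psi\|_\infty \le C_0$ are essential. Once this is in hand, the unique continuation step reduces to the classical fact that a weak solution of a Lipschitz-coefficient elliptic equation that vanishes on an open subset of its domain vanishes identically on the connected component.
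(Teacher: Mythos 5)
Your argument follows essentially the same route as the paper: a compactness argument by contradiction, flattening the boundary, deriving uniform a priori estimates to extract a convergent subsequence, showing the limit has vanishing Cauchy data on a boundary portion, and invoking unique continuation to force the limit to vanish, contradicting the normalization. The only differences are technical: the paper uses boundary $C^{1,\alpha}$ Schauder estimates and $C^1$ convergence (citing the boundary unique continuation result of \cite{AKS} directly), whereas you use uniform $H^2$ bounds with weak $H^2$ / strong $H^1$ convergence and reduce the boundary unique continuation to the interior case via extension by zero across the flat face; both implementations are valid.
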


\begin{proof}
The lemma is proved by a compactness argument.
Suppose that there exist sequences $\{ \psi_k\}$ in $C^2(\br^{d-1})$, $\{ u_k\}$ in $H^1(Z(\psi_k, 2))$, $\{ E_k\}\subset \br$, and
 $\{ A^k(x) \}$ with ellipticity constant $\kappa$,  such that $\psi_k (0)=|\nabla \psi_k (0)|=0$,
\begin{equation}\label{5.3.1}
-\text{div}(A^k \nabla u_k) +E_k u_k =0  \text{ in } Z(\psi_k, 2), \quad 
\quad u_k =0 \text{ on } I (\psi_k, 2),
\end{equation}
\begin{equation}\label{5.3.2}
|E_k|+\|\nabla A^k\|_\infty + \|\nabla^2 \psi_k \|_\infty \le C_0,
\end{equation}
\begin{equation}\label{5.3.3}
\int_{Z(\psi_k, 2)} |\nabla u_k|^2 \, dx =1, \quad \int_{Z(\psi_k, 1)} |\nabla u_k|^2\, dx \ge c_0,
\end{equation}
and
\begin{equation}\label{5.3.5}
\int_{I (\psi_k, 1)} |\nabla u_k|^2\, d\sigma \to 0 \quad \text{ as } k\to \infty.
\end{equation}
By passing to a subsequence we may assume that $\psi_k \to \psi$ in $C^{1, \alpha}(|x^\prime|<4)$.
By the boundary $C^{1, \alpha}$ estimate we see that the norm of  $u_k$ in  $C^{1, \alpha} (Z(\psi_k, 3/2))$
is uniformly bounded.
As a result, by passing to a subsequence, we may assume that $v_k \to v$ in $C^1(Z(0,3/2))$,
where $v_k (x^\prime, x_d)=u_k (x^\prime, x_d -\psi_k (x^\prime))$ and
$Z(0, r)=\{ (x^\prime, x_d): \, |x^\prime|<r \text{ and } 0<x_d<r\}$.

We now let $u(x^\prime, x_d)=v(x^\prime, x_d +\psi(x^\prime))$.
Clearly, by passing to subsequences, we may also assume that
$E_k \to E$ in $\br$ and $A^k \to A$ in $C^\alpha (B(0, R_0))$.
It follows that $|E| +\|\nabla A\|_{L^\infty (B(0, R_0))}\le C_0$,
\begin{equation}\label{5.3.7}
-\text{div}(A\nabla u) +E u =0 \text{ in } Z(\psi, 1) \quad \text{ and } \quad u=0 \text{ on } I (\psi, 1).
\end{equation}
In view of (\ref{5.3.5}) we also obtain $\nabla u=0$ in $I (\psi, 1)$.
By the unique continuation property of solutions of second-order elliptic equations with Lipschitz continuous
coefficients (e.g. see \cite{AKS}), it follows that
$u=0$ in $Z(\psi, 1)$. However, by taking limit in  the inequality in  (\ref{5.3.3}), 
\begin{equation}\label{5.3.8}
\int_{Z(\psi, 1)} |\nabla u|^2\, dx \ge c_0>0.
\end{equation}
This gives us a contradiction and finishes the proof.
\end{proof}

\begin{remark}\label{remark-5.1}
{\rm
Suppose that $\mathcal{L}_\varep (u_\varep) =\lambda u_\varep$ in $Z(\psi, 2\varep)$
and $u_\varep =0$ in $I( \psi, 2\varep)$ for some $\lambda>1$.
Assume that $\varep^2 \lambda +\|\nabla A\|_\infty+\|\nabla^2 \psi\|_\infty\le C_0$ and
\begin{equation}\label{remark-5.1.1}
\int_{Z(\psi, \varep)} |\nabla u_\varep|^2 \, dx
\ge c_0 \int_{Z(\psi, 2\varep)} |\nabla u_\varep|^2\, dx
\end{equation}
for some $c_0, C_0>0$. Then
\begin{equation}\label{remark-5.1.3}
\int_{I (\psi, \varep)}
|\nabla u_\varep|^2\, d\sigma 
\ge \frac{c}{\varep} \int_{Z(\psi, 2\varep)} |\nabla u_\varep|^2\, dx,
\end{equation}
where $c>0$ depends only on the ellipticity constant of $A$, $c_0$, and $C_0$.
This is a simple consequence of Lemma \ref{lemma-5.3}. Indeed,
let $w(x)= u_\varep (\varep x)$ and $\psi_\varep (x^\prime)=\varep^{-1}\psi (\varep x^\prime)$.  Then 
$\mathcal{L}_1 (w)=\varep^2\lambda w$ in $Z(\psi_\varep, 2)$ and
$$
\int_{Z(\psi_\varep, 1)} |\nabla w|^2\, dx \ge c_0 \int_{Z(\psi_\varep, 2)} |\nabla w|^2\, dx.
$$
Since $\varep^2 \lambda +\|\nabla A\|_\infty +\|\nabla^2 \psi_\varep\|_\infty\le C_0$, it follows from 
Lemma \ref{lemma-5.3} that
\begin{equation}\label{remark-5.1.5}
\int_{I(\psi_\varep, 1)} |\nabla w|^2\, d\sigma
\ge c \int_{Z(\psi_\varep, 2)} |\nabla w|^2\, dx,
\end{equation}
which gives (\ref{remark-5.1.3}).
Note that the periodicity assumption of $A$ is not needed here.
}
\end{remark}

\begin{proof}[\bf Proof of Theorem \ref{theorem-C}]
For each $P\in \partial\Omega$,  there exists a  new coordinate system of $\br^d$,
 obtained from the standard Euclidean coordinate system through
translation and rotation, so that $P=(0, 0)$ and
$$
\Omega \cap B(P, r_0) =\big\{ (x^\prime, x_d)\in \br^d: \ x_d>\psi (x^\prime)\big\} \cap B(P, r_0),
$$
where $\psi (0)=|\nabla \psi (0)| =0$ and $\|\nabla^2 \psi\|_\infty \le M$.
For $0<r<cr_0$, let $(\Delta (P, r), D(P, r))$ denote the pair obtained from $(I(\psi, r), Z(\psi, r))$ by this change of 
the coordinate system. 
If $0<\varep<c r_0$, we may construct a finite sequence of pairs $\{ \big(\Delta (P_i, \varep), D_i (P_i, \varep)\big)\}$ such that
$$
\partial\Omega =\bigcup_i \Delta (P_i, \varep)
$$
and
\begin{equation}\label{5.5.0}
\sum_i \chi_{{D} (P_i, 2\varep)} \le C \quad \text{ and } \quad \Omega_{c\varep} \subset \bigcup_i D(P_i, \varep).
\end{equation}
Let $\Delta_i (r)=\Delta (P_i, r)$ and $D_i (r)=D_i (P, r)$.

Suppose now that $u_\varep \in H^1_0(\Omega)$, $\mathcal{L}_\varep (u_\varep ) =\lambda u_\varep$ in $\Omega$,
and $\| u_\varep\|_{L^2(\Omega)}=1$.
Assume that $\lambda>1$ and $\varep \lambda\le \delta$, where $\delta=\delta (A, \Omega)>0$ is sufficiently small.
It follows from Lemma \ref{lemma-5.2} and (\ref{4.5-5})-(\ref{4.5-6}) in the proof of Theorem \ref{theorem-4.1}  that
\begin{equation}\label{5.5.1}
c\lambda \le \frac{1}{\varep} \int_{\Omega_{c\varep} }  |\nabla u_\varep |^2\, dx
\le \frac{1}{\varep} \int_{\Omega_{2\varep}} |\nabla u_\varep|^2\, dx \le C \lambda.
\end{equation}
To estimate $\int_{\partial\Omega} |\nabla u_\varep|^2\, d\sigma$ from below, we divide $\{ D_i (\varep)\}$
into two groups. We call $i \in J$ if
\begin{equation}\label{5.5.2}
\int_{{D}_i(2\varep)} |\nabla u_\varep|^2\, dx
\le N \int_{D_i (\varep)} |\nabla u_\varep|^2\, dx
\end{equation}
with a large constant $N=N(A, \Omega)$ to be determined. Note that if $i \in J$,
by Remark \ref{remark-5.1}, 
$$
\int_{\Delta_i (\varep) } |\nabla u_\varep|^2\, d\sigma 
\ge \frac{\gamma }{\varep} \int_{D_i (\varep)} |\nabla u_\varep|^2\, dx,
$$
where $\gamma >0$ depends only on $A$, $\Omega$, and $N$. It follows by summation that
\begin{equation}\label{5.5.3}
\aligned
\int_{\partial\Omega}
|\nabla u_\varep|^2\, d\sigma & \ge 
\frac{c\gamma }{\varep} \int_{\cup_{i\in J} {D}_i (\varep)} |\nabla u_\varep|^2\, dx\\
&\ge \frac{c\gamma }{\varep} \left\{ \int_{\Omega_{c\varep}} |\nabla u_\varep|^2\, dx
-
 \int_{\cup_{i\notin J} {D}_i (\varep)} |\nabla u_\varep|^2\, dx\right\} \\
&\ge \frac{c\gamma}{\varep}
\left\{ c\varep \lambda -  \int_{\cup_{i\notin J} {D}_i (\varep)} |\nabla u_\varep|^2\, dx\right\},
\endaligned
\end{equation}
where we have used the fact $\Omega_{c\varep} \subset \bigcup_i {D}_i (\varep)$ and estimate  (\ref{5.5.1}).

Finally, we note that by the definition of $J$ as well as the estimate (\ref{5.5.1}),
$$
 \int_{\cup_{i\notin J} {D}_i (\varep)} |\nabla u_\varep|^2\, dx
\le 
\frac{C}{N} \int_{\cup_{i\notin J} D_i(2\varep)} |\nabla u_\varep|^2\, dx
\le \frac{C}{N} \int_{\Omega_{2\varep}} |\nabla u_\varep|^2\, dx
\le \frac{C\varep \lambda}{N},
$$
where we have used the fact $\bigcup_i D_i(2\varep) \subset \Omega_{2\varep}$.
This, together with (\ref{5.5.3}), yields
\begin{equation}\label{5.5.5}
\int_{\partial\Omega} |\nabla u_\varep|^2\, d\sigma
\ge c\gamma \lambda \left\{ c-CN^{-1}\right\}
\ge c\lambda,
\end{equation}
if $N =N(A, \Omega)$ is sufficiently large. The proof is complete.
\end{proof}

\bibliography{kls4}

\small
\noindent\textsc{Department of Mathematics, 
University of Chicago, Chicago, IL 60637}\\
\emph{E-mail address}: \texttt{cek@math.uchicago.edu} \\

\noindent \textsc{Courant Institute of Mathematical Sciences, New York University, New York, NY 10012}\\
\emph{E-mail address}: \texttt{linf@cims.nyu.edu}\\

\noindent\textsc{Department of Mathematics, 
University of Kentucky, Lexington, KY 40506}\\
\emph{E-mail address}: \texttt{zshen2@uky.edu} \\

\noindent \today

\end{document}